\newtheorem{theorem}{Theorem}[section]
\newtheorem{lemma}[theorem]{Lemma}
\newtheorem{proposition}[theorem]{Proposition}
\newtheorem{corollary}[theorem]{Corollary}
\numberwithin{equation}{section}
\newtheorem{definition}[theorem]{Definition}
\newtheorem{remark}[theorem]{Remark}
\date{}
\begin{document}

\title{Lie groups, algebraic groups and lattices}

\author{Alexander Gorodnik}

\maketitle

\begin{abstract}
This is a brief introduction to the theories of Lie groups, algebraic groups and their
discrete subgroups, which is based on a lecture series given during the Summer School
held in the Banach Centre in Poland in Summer 2011.
\end{abstract}

{\small
\tableofcontents
}

\vspace{1cm}

This exposition is an expanded version of the 10-hour course 
given during the first week of the Summer School
``Modern dynamics and interactions with analysis, geometry and number theory''
that was held in the Bedlewo Banach Centre in Summer 2011.
The aim of this course was to cover background material regarding
Lie groups, algebraic groups and their discrete subgroups that would be useful in
the subsequent advanced courses. The presentation is intended to be accessible
for beginning PhD students, and we tried to make most emphasise
on ideas and techniques that play fundamental role in the theory of dynamical systems.
Of course, the notes would only provide one of the first steps towards mastering these topics,
and in \S8 we offer some suggestions for further reading.

In \S1 we develop the theory of (matrix) Lie groups. In particular, we introduce
the notion of Lie algebra, discuss relation between Lie-group homomorphisms
and the corresponding Lie-algebra homomorphisms, show that every Lie group has a structure
of an analytic manifold, and prove that every continuous homomorphism between Lie groups
is analytic. In \S2 we establish existence and uniqueness of invariant measures
on Lie groups. In \S3 we discuss finite-dimensional representations of Lie groups.
This includes a theorem regarding triangularisation of  representations of solvable groups
and a theorem regarding complete reducibility of representations of semisimple groups.
The later is treated using existence of the invariant measure constructed in \S2. 
Next, in \S4 we develop elements of the theory
of algebraic groups. We shall demonstrate that orbits for actions of algebraic
groups exhibit quite rigid behaviour, which is responsible for some of the rigidity phenomena
in the theory of dynamical systems. In \S5 we introduce the notion of a lattice in a Lie group
that plays important role in the theory of dynamical systems.
In particular, lattices can be used to construct homogeneous spaces of finite volume
leading to a rich class of dynamical systems, which are usually called the homogeneous dynamical systems.
Classification of smooth actions of higher-rank lattices is an active topic of research now.
In \S5 we present Poincare's geometric construction of lattices in $\hbox{SL}_2(\mathbb{R})$,
and in \S6 we explain number-theoretic constructions of lattices
which use the theory of algebraic groups.
These arithmetic lattices play crucial role in many applications of dynamical systems to number theory.
Finally, in \S7 we illustrate utility of techniques developed in these notes
by giving a dynamical proof of the Borel density theorem.

\section{Lie groups and Lie algebras}

\subsection{Lie groups and one-parameter groups}

Thought out these notes, $\hbox{M}_d(\mathbb{R})$ denotes the set of $d\times d$ matrices with 
real coefficients, and $\hbox{GL}_d(\mathbb{R})$ denotes the group of non-degenerate matrices.
The space $\hbox{M}_d(\mathbb{R})$ is equipped with the Euclidean topology,
and distance between the matrices will be measured by the norm:
$$
\|X\|=\sqrt{\sum_{i,j=1}^d |x_{ij}|^2},\quad X\in \hbox{M}_d(\mathbb{R}).
$$

\begin{definition} 
{\rm 
A (matrix) \emph{Lie group} is a closed subgroup of $\hbox{\rm GL}_d(\mathbb{R})$. 
}
\end{definition}

For instance, the following well-known matrix groups are examples of Lie groups:
\begin{itemize}
\item $\hbox{\rm SL}_d(\mathbb{R})=\{g\in \hbox{\rm GL}_d(\mathbb{R}):\, \det(g)=1\}$ --- the special
  linear group,
\item $\hbox{\rm O}_d(\mathbb{R})=\{g\in \hbox{\rm GL}_d(\mathbb{R}):\, {}^tgg=I\}$ --- the orthogonal group.
\end{itemize}

In order to understand the structure of Lie groups, we first study  one-parameter groups.

\begin{definition}
{\rm 
A \emph{one-parameter group} $\sigma$ is a continuous homomorphism $\sigma:\mathbb{R}\to \hbox{\rm GL}_d(\mathbb{R})$.
}
\end{definition}

One-parameter groups can be constructed using the exponential map:
$$
\exp(A)=\sum_{n=0}^\infty \frac{A^n}{n!}, \quad A\in \hbox{M}_d(\mathbb{R}).
$$
Since $\|A^n\|\le \|A\|^n$, this series converges uniformly on compact sets and defines an analytic map.

The exponential map satisfies the following properties:

\begin{lemma}\label{l:exp}
\begin{enumerate}
\item[(i)] $\exp({}^tA)={}^t\exp(A)$,
\item[(ii)] For $g\in \hbox{\rm GL}_d(\mathbb{R})$, $\exp(g Ag^{-1})=g\exp(A)g^{-1}$,
\item[(iii)] If $AB=BA$, then $\exp(A+B)=\exp(A)\exp(B)$,
\item[(iv)] $\det(\exp(A))=\exp(\hbox{\rm Tr}(A))$.
\end{enumerate}
\end{lemma}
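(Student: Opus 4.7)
The plan is to handle (i)--(iii) by pushing the relevant linear/algebraic operation through the partial sums of $\exp$ and using continuity, and then to treat (iv) --- the only non-routine part --- by reducing to a triangular situation.

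For (i), the transpose is a continuous linear map and satisfies ${}^t(A^n)=({}^tA)^n$ by an easy induction (since ${}^t(XY)={}^tY\,{}^tX$, but one factor suffices here because we are raising one matrix to a power). Dividing by $n!$ and summing, the equality holds at every partial sum, and continuity of transpose lets us pass to the limit. For (ii), the same strategy works: $(gAg^{-1})^n = g A^n g^{-1}$, and left/right multiplication by the fixed matrices $g$ and $g^{-1}$ are continuous, so they commute with the infinite sum defining $\exp$.

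For (iii), because $A$ and $B$ commute the binomial expansion
\[
(A+B)^n=\sum_{k=0}^n \binom{n}{k} A^k B^{n-k}
\]
is valid. Dividing by $n!$ and summing gives $\exp(A+B)=\sum_{n,k} \frac{A^k}{k!}\frac{B^{n-k}}{(n-k)!}$ formally, and this rearrangement is justified by the Cauchy-product theorem applied to the absolutely convergent series $\sum A^n/n!$ and $\sum B^n/n!$ (absolute convergence following from $\|A^n\|\le \|A\|^n$). The rearranged double sum factorises as $\exp(A)\exp(B)$.

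Statement (iv) is the one requiring an idea. My plan is to work over $\CC$ and use Schur's theorem: there exists $g\in \hbox{GL}_d(\CC)$ such that $T:=gAg^{-1}$ is upper triangular, with the eigenvalues $\lambda_1,\dots,\lambda_d$ of $A$ (counted with multiplicity) on the diagonal. A direct computation shows $T^n$ is upper triangular with diagonal entries $\lambda_i^n$, so $\exp(T)$ is upper triangular with diagonal entries $e^{\lambda_i}$. Hence $\det(\exp(T))=\prod_i e^{\lambda_i}=e^{\sum_i \lambda_i}=e^{\tr(T)}$. Applying (ii) (whose proof is insensitive to the ground field) gives $\exp(A)=g^{-1}\exp(T)g$, so $\det(\exp(A))=\det(\exp(T))$; and $\tr(A)=\tr(T)$ is standard. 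This gives (iv). The main obstacle here is really just invoking triangularisation over $\CC$ cleanly; an alternative I would mention is the ODE route, showing that $f(t):=\det(\exp(tA))$ satisfies $f'(t)=\tr(A)f(t)$ with $f(0)=1$ (via Jacobi's formula $\frac{d}{dt}\det(M(t))=\tr(M(t)^{-1}M'(t))\det(M(t))$ applied to $M(t)=\exp(tA)$, using $M'(t)=AM(t)$), so $f(t)=e^{t\tr(A)}$, evaluated at $t=1$.
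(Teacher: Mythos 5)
Your proof is correct. Parts (i)--(iii) are handled essentially as in the paper: (i) and (ii) by pushing transpose/conjugation through the partial sums, and (iii) by the binomial rearrangement of the product of the two absolutely convergent series (the paper does the same regrouping by total degree; your explicit appeal to the Cauchy product theorem just makes the justification of the rearrangement more visible). For (iv) you take a genuinely different route: the paper uses the Jordan decomposition $A=g(A_1+A_2)g^{-1}$ with $A_1$ diagonal and $A_2$ nilpotent commuting with $A_1$, then invokes (ii) \emph{and} (iii) to reduce to a direct computation for each piece, whereas you conjugate to a single upper triangular matrix via Schur's theorem, observe that $\exp(T)$ is upper triangular with diagonal entries $e^{\lambda_i}$, and conclude at once. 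Your version is a little leaner since it does not need part (iii) or the commutativity of the two Jordan pieces, only closedness of the upper triangular matrices under products and limits; both arguments share the need to pass to $\CC$ and to note that conjugation (over $\CC$) preserves $\det\circ\exp$ and $\tr$. The ODE alternative via Jacobi's formula that you sketch is also a standard and complete proof, and has the minor advantage of staying over $\RR$ and avoiding any canonical form, at the cost of invoking differentiability of $t\mapsto\exp(tA)$ and Jacobi's formula.
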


\begin{proof}
(i) and (ii) are easy to check by a direct computation.

To prove (iii), we observe that by the Binomial Formula,
\begin{align*}
\exp(A)\exp(B)&=\sum_{n,m=0}^\infty \frac{A^nB^m}{n!m!}=\sum_{\ell=0}^\infty \frac{1}{\ell !}\left(\sum_{m+n=\ell} \frac{\ell!}{n!
    m!} A^n B^m \right)\\
&=\sum_{\ell=0}^\infty \frac{1}{\ell !} (A+B)^\ell =\exp(A+B). 
\end{align*}

To prove (iv), we use the Jordan Canonical Form. Every matrix $A$ can be written as 
$$
A=g(A_1+A_2)g^{-1},
$$
where $g\in\hbox{\rm GL}_d(\mathbb{R})$, $A_1$ is a diagonal matrix, and $A_2$ is an upper triangular
nilpotent matrix that commutes with $A_1$. It follows from (ii)--(iii) that once the claim is established for $A_1$ and
$A_2$, then it will also hold for  $A$. Since $A_1$ and $A_2$ are of special shape, the claim for them
can be verified by a direct computation.
\end{proof}

Lemma \ref{l:exp} implies that 
$$
\sigma_A(t)=\exp(tA)
$$
defines a one-parameter group.
We note that in a neighbourhood of zero,
$$
\sigma_A(t)=I+ t A+O(t^2).
$$
This implies that 
$$
\sigma_A'(0)=A\quad\hbox{and}\quad (D\exp)_0=I,
$$
where $D F$ denotes the derivative of a map $F:\hbox{M}_d(\mathbb{R})\to\hbox{M}_d(\mathbb{R})$.
Hence, by the Inverse Function Theorem,
the exponential map gives an 
analytic bijection from a small neighbourhood of the zero matrix $0$ to
a small neighbourhood of the identity matrix $I$ in $\hbox{M}_d(\mathbb{R})$.
This observation will play important role below.

\vspace{0.3cm}

Our first main result is a complete description of one-parameter groups:

\begin{theorem}\label{th:one-par}
Every one-parameter group is of the form $t\mapsto \exp(tA)$ for some $A\in \hbox{\rm M}_d(\mathbb{R})$.
\end{theorem}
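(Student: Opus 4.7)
The plan is to leverage the fact, observed just before the theorem, that $\exp$ restricts to an analytic bijection from a neighbourhood $W$ of $0\in\hbox{M}_d(\mathbb{R})$ onto a neighbourhood $U=\exp(W)$ of $I$. We may shrink $W$ so that it is an open ball around $0$ (hence convex) and denote the local inverse by $\log:U\to W$. Given a one-parameter group $\sigma$, continuity and $\sigma(0)=I$ yield a $t_0>0$ such that $\sigma(t)\in U$ for every $|t|\le t_0$. Set
\[
A := \frac{1}{t_0}\log\sigma(t_0),
\]
so that $\sigma(t_0)=\exp(t_0A)$ by construction. The goal is to show $\sigma(t)=\exp(tA)$ for all $t\in\mathbb{R}$.

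First I would verify this identity on the dyadic multiples $kt_0/2^n$. The key auxiliary tool is uniqueness of square roots near $I$: the squaring map $g\mapsto g^2$ has derivative $H\mapsto 2H$ at $g=I$, so by the Inverse Function Theorem it is a diffeomorphism on some neighbourhood $U'\subseteq U$ of $I$. Shrinking $t_0$ if necessary, I can ensure that $\sigma(s)\in U'$ for all $|s|\le t_0$; since $W$ is convex, also $(t_0/2^n)A\in W$ and hence $\exp((t_0/2^n)A)\in U'$ for every $n\ge 0$.

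I then induct on $n$. The base case $n=0$ is the definition of $A$. For the inductive step, Lemma \ref{l:exp}(iii) gives
\[
\sigma(t_0/2^{n+1})^2 = \sigma(t_0/2^n) = \exp((t_0/2^n)A) = \exp((t_0/2^{n+1})A)^2,
\]
and injectivity of squaring on $U'$ forces $\sigma(t_0/2^{n+1})=\exp((t_0/2^{n+1})A)$. Using the homomorphism property of both $\sigma$ and $t\mapsto\exp(tA)$, together with Lemma \ref{l:exp}(iii), the identity then propagates to every dyadic multiple $kt_0/2^n$. Continuity of both sides and density of dyadic multiples of $t_0$ in $\mathbb{R}$ finally give $\sigma(t)=\exp(tA)$ for all $t\in\mathbb{R}$.

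The main obstacle is the bookkeeping of neighbourhoods: I have to arrange $W$, $U$, $U'$ and $t_0$ so that the induction never leaves the region $U'$ where squaring is injective, for \emph{both} the sequence $\sigma(t_0/2^n)$ and the sequence $\exp((t_0/2^n)A)$. Convexity of the ball $W$ around $0$ handles the exponential side, while uniform continuity of $\sigma$ on $[-t_0,t_0]$ handles the $\sigma$ side; once these are in place the remainder of the argument is a routine combination of the homomorphism property with continuity.
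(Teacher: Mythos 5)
Your proposal is correct and follows essentially the same route as the paper: define $A$ from $\sigma$ at a small time, propagate the identity $\sigma(t)=\exp(tA)$ to dyadic times via uniqueness of square roots near $I$, and finish using the homomorphism property, density of dyadics and continuity. The only real difference is that the paper proves square-root uniqueness by a direct norm estimate on the ball $\{\|Y-I\|<1\}$ rather than by the Inverse Function Theorem for $g\mapsto g^2$, which also streamlines your neighbourhood bookkeeping (the bound $\|A\|<\delta/\epsilon$ keeps the exponential side inside that ball automatically); in your version you should fix the ball $W$ small enough that $\exp(W)\subseteq U'$ at the outset, since convexity of $W$ alone only places $\exp((t_0/2^n)A)$ in $U$, not in $U'$.
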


This theorem, in particular, implies a non-obvious fact that every continuous homomorphism 
$\mathbb{R}\to \hbox{\rm GL}_d(\mathbb{R})$ is automatically analytic. As we shall see,
this is a prevalent phenomenon in the world of Lie groups (cf. Corollary \ref{c:annn} below).

\begin{proof}
We claim that if for some matrices $Y_1$ and $Y_2$, we have
$$
\|Y_1-I\|<1,\quad \|Y_2-I\|<1,\quad Y_1^2=Y_2^2,
$$
then $Y_1=Y_2$. Let us write $Y_i=I+A_i$.
Then since $(I+A_1)^2=(I+A_2)^2$,
$$
2A_1-2A_2=A_2^2-A_1^2=A_2(A_2-A_1)+(A_2-A_1)A_1,
$$
and
$$
2\|A_1-A_2\|\le (\|A_2\|+\|A_1\|)\|A_2-A_1\|.
$$
Because $\|A_2\|+\|A_1\|<2$, this implies the claim.

Let $\sigma$ be a one-parameter group. It follows from continuity of the maps $\sigma$ and $\exp$ that
there exist $\delta,\epsilon>0$ such that
$$
\sigma([-\epsilon,\epsilon])\subset \exp(\{\|X\|<\delta\})\subset \{\|Y-I\|<1\}.
$$
In particular, $\sigma(\epsilon)=\exp(\epsilon A)$ for some $A$ with $\|A\|<\delta/\epsilon$.
Then $\sigma(\frac{1}{2}\epsilon)^2=\exp(\frac{1}{2}\epsilon A)^2$, and applying the above claim,
we deduce that
$\sigma(\frac{1}{2}\epsilon)=\exp(\frac{1}{2}\epsilon A)$. We repeat this argument to conclude that
$\sigma(\frac{1}{2^m}\epsilon)=\exp(\frac{1}{2^m}\epsilon A)$ for all $m\in \mathbb{N}$, and taking powers,
we obtain $\sigma(\frac{n}{2^m}\epsilon)=\exp(\frac{n}{2^m}\epsilon A)$ for all $n\in\mathbb{Z}$ and
$m\in\mathbb{N}$. Therefore, it follows from continuity of the maps $\sigma$ and $\exp$ that 
$\sigma(t\epsilon)=\exp(t\epsilon A)$ for all $t\in \mathbb{R}$, as required.
\end{proof}

\subsection{Lie algebras}

One of the most basic and very useful ideas in mathematics is the idea of linearisation.
In the setting of Lie groups, this leads to the notion of Lie algebra. 
For $X,Y\in \hbox{M}_d(\mathbb{R})$, we define the {\it Lie bracket} by
$$
[X,Y]=XY-YX.
$$
It turns out that the Lie bracket corresponds to the second order term of 
the Taylor expansion of product map $(g,h)\mapsto g\cdot h$.

\begin{definition}
{\rm 
A subspace of $\hbox{M}_d(\mathbb{R})$ is called a \emph{Lie algebra} if it closed
with respect to the Lie bracket operation.
}
\end{definition}

\begin{definition}
{\rm 
The \emph{Lie algebra of a Lie group $G$} is defined by 
$$
\mathcal{L}(G)=\{X\in \hbox{M}_d(\mathbb{R}):\, \exp(tX)\in G\quad\hbox{for all $t\in \mathbb{R}$} \}.
$$
}
\end{definition}

For example, using Lemma \ref{l:exp}, one can check that
\begin{itemize}
\item $\mathcal{L}(\hbox{\rm SL}_d(\mathbb{R}))=\{X\in \hbox{\rm M}_d(\mathbb{R}):\, \hbox{Tr}(X)=0\}$,
\item $\mathcal{L}(\hbox{\rm O}_d(\mathbb{R}))=\{X\in \hbox{\rm M}_d(\mathbb{R}):\, {}^tX+X=0\}$.
\end{itemize}

We prove that

\begin{proposition}\label{p:lie_algebra}
$\mathcal{L}(G)$ is a Lie algebra, namely, it is a vector space and is closed under the Lie bracket operation.
\end{proposition}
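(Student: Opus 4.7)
The plan is to establish three closure properties for $\mathcal{L}(G)$: closure under scalar multiplication, under addition, and under the Lie bracket. In each case the strategy is the same: exhibit $\exp(tZ)$ for the proposed new element $Z$ as a limit of products of elements that manifestly lie in $G$, and then invoke closedness of $G$ in $\mathrm{GL}_d(\mathbb{R})$. Scalar closure is immediate: for $X\in \mathcal{L}(G)$ and $\lambda\in\mathbb{R}$ we have $\exp(t(\lambda X)) = \exp((\lambda t)X)\in G$ for every $t$.

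For closure under addition, I would prove the Lie--Trotter formula
$$
\exp(X+Y) = \lim_{n\to\infty} \bigl(\exp(X/n)\exp(Y/n)\bigr)^n.
$$
The argument is a matched Taylor expansion: both $\exp(X/n)\exp(Y/n)$ and $\exp((X+Y)/n)$ equal $I + (X+Y)/n + O(1/n^2)$, so their difference is $O(1/n^2)$ while each has norm $1 + O(1/n)$. The telescoping identity $A^n - B^n = \sum_{k=0}^{n-1} A^k(A-B)B^{n-1-k}$ then produces a global discrepancy of order $O(1/n)$ between their $n$-th powers. Applying this with $X,Y$ replaced by $tX,tY$ and using that every factor $\exp(tX/n),\exp(tY/n)$ lies in $G$, closedness of $G$ gives $\exp(t(X+Y))\in G$ for every $t\in \mathbb{R}$.

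For closure under the Lie bracket, I would use the commutator formula
$$
\exp([X,Y]) = \lim_{n\to\infty} \bigl(\exp(X/n)\exp(Y/n)\exp(-X/n)\exp(-Y/n)\bigr)^{n^2}.
$$
The key computation is that expanding the four exponentials through second order and multiplying yields
$$
\exp(X/n)\exp(Y/n)\exp(-X/n)\exp(-Y/n) = I + \frac{[X,Y]}{n^2} + O(1/n^3),
$$
since the first-order contributions in $X$ and $Y$ cancel and the cross terms $XY$ and $YX$ combine to give exactly the commutator. Comparing with $\exp([X,Y]/n^2) = I + [X,Y]/n^2 + O(1/n^4)$ and raising to the $n^2$-th power yields the formula. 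Replacing $Y$ by $tY$ exhibits $\exp(t[X,Y])$ as a limit of products lying in $G$, so $\exp(t[X,Y])\in G$ for every $t$, and hence $[X,Y]\in \mathcal{L}(G)$.

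The main obstacle is the quantitative control required by the commutator formula: raising matrices to the $n^2$-th power severely amplifies errors, so one must verify that the $O(1/n^3)$ discrepancy per factor really survives this amplification. The clean check is that each factor has norm $1 + O(1/n^2)$, so its $n^2$-th power stays bounded uniformly in $n$, and the telescoping estimate contributes a final error of order $n^2\cdot O(1)\cdot O(1/n^3) = O(1/n) \to 0$.
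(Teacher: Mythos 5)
Your proof is correct, and while it rests on the same skeleton as the paper's argument (the Lie--Trotter and commutator limit formulas of Corollary \ref{c:prod}, plus closedness of $G$), it implements the two technical steps differently. The paper first proves the second-order expansion $C(A,B)=A+B+\tfrac12[A,B]+O(r^3)$ (Lemma \ref{l:c}), writes $\exp(A/n)\exp(B/n)=\exp(C_n)$ and the four-fold commutator product as $\exp(C_n)$ with $nC_n\to A+B$, resp. $n^2C_n\to[A,B]$, and then feeds these sequences into the observation (\ref{eq:obs}): if $C_n\in\exp^{-1}(G)$, $C_n\to 0$ and $s_nC_n\to D$, then $\exp(tD)\in G$ for \emph{every} $t$, the arbitrary $t$ being extracted via $m_n=\lfloor ts_n\rfloor$. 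You never invert $\exp$ and never need the Campbell--Baker--Hausdorff expansion: you prove the limit formulas directly by matched Taylor expansions together with the telescoping bound on $A^n-B^n$, and you get $\exp(t(X+Y))\in G$ and $\exp(t[X,Y])\in G$ for all $t$ by rescaling the inputs ($X,Y\mapsto tX,tY$, resp. $Y\mapsto tY$, using bilinearity of the bracket), which is legitimate because $\mathcal{L}(G)$ is visibly stable under real scaling. Your error analysis for the $n^2$-th power is the right one; the only point to make explicit is that the bound ``each factor has norm $1+O(1/n^2)$'' should be taken in a submultiplicative norm with $\|I\|=1$ (e.g.\ the operator norm), since for the Frobenius norm used in the paper $\|I\|=\sqrt d$ and the crude bound on powers degrades (passing between the two norms costs only a fixed constant). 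What the paper's route buys is reusability and economy: the observation (\ref{eq:obs}) delivers all real $t$ from a single unscaled sequence and is used again in the proof of Proposition \ref{p:nbhd}, and Lemma \ref{l:c} feeds into the later Campbell--Baker--Hausdorff discussion; what your route buys is a more elementary, self-contained argument whose convergence claims are verified by explicit norm estimates rather than through the matrix logarithm.
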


Given $A,B\in\hbox{M}_d(\mathbb{R})$ such that $\|A\|,\|B\|<r$ with $r\approx 0$,
the product $\exp(A)\exp(B)$ is contained in a small neighbourhood of identity.
Hence, 
$$
\exp(A)\exp(B)=\exp(C),
$$
where $C=C(A,B)$ is a uniquely determined matrix contained
in a neighbourhood of zero. We compute the Taylor expansion for $C$:

\begin{lemma}\label{l:c}
$C(A,B)=A+B+\frac{1}{2}[A,B]+O(r^3)$.
\end{lemma}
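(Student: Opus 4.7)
The plan is to expand both sides of the defining equation $\exp(A)\exp(B) = \exp(C)$ as power series in $(A,B)$ up to total degree $2$ and match coefficients. The only conceptual point is that $C$ actually admits a Taylor expansion in $(A,B)$ near the origin; this follows from the Inverse Function Theorem applied to $\exp$ (already invoked in the excerpt), which also ensures $C(0,0) = 0$. Everything else is bookkeeping.

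First I would expand the left-hand side directly from the defining series, writing $\exp(A) = I + A + \tfrac{1}{2}A^2 + O(r^3)$ and similarly for $B$. Multiplying out and collecting terms of total degree at most $2$ in $(A,B)$ yields
\[
\exp(A)\exp(B) = I + (A+B) + \tfrac{1}{2}\bigl(A^2 + 2AB + B^2\bigr) + O(r^3).
\]
The key algebraic step is to split the symmetric square from the commutator via $A^2 + 2AB + B^2 = (A+B)^2 + [A,B]$, which rewrites the above as
\[
\exp(A)\exp(B) = I + (A+B) + \tfrac{1}{2}(A+B)^2 + \tfrac{1}{2}[A,B] + O(r^3).
\]

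Next I would expand the right-hand side. Writing $C = C_1 + C_2 + O(r^3)$ with $C_k$ homogeneous of degree $k$ in $(A,B)$, the series for $\exp$ gives
\[
\exp(C) = I + C_1 + \bigl(C_2 + \tfrac{1}{2}C_1^2\bigr) + O(r^3).
\]
Matching coefficients of the two expansions, the degree-$1$ comparison gives $C_1 = A+B$, and then the degree-$2$ comparison gives $C_2 + \tfrac{1}{2}(A+B)^2 = \tfrac{1}{2}(A+B)^2 + \tfrac{1}{2}[A,B]$, i.e.\ $C_2 = \tfrac{1}{2}[A,B]$, which is the claimed formula.

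The main obstacle is really only to ensure that the coefficient matching is legitimate: one needs $C$ to have a genuine degree-$2$ Taylor polynomial in $(A,B)$ and the $O(r^3)$ remainders to be uniform for $\|A\|,\|B\| < r$. Both facts are immediate from the analyticity of $\exp$ and the local invertibility encoded in $(D\exp)_0 = I$, which have already been established.
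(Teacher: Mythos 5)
Your proof is correct and takes essentially the same route as the paper: expand $\exp(A)\exp(B)$ and $\exp(C)$ to second order in $(A,B)$ and match terms, with the commutator emerging from the identity $A^2+2AB+B^2=(A+B)^2+[A,B]$. The only (harmless) difference is in how the expansion of $C$ is justified: you invoke analyticity of $C$ via the Inverse Function Theorem to write $C=C_1+C_2+O(r^3)$, whereas the paper bootstraps, first deducing $C=A+B+O(r^2)$ from the first-order comparison and then substituting $C=A+B+S$ to identify $S=\tfrac{1}{2}[A,B]+O(r^3)$, which avoids appealing to a homogeneous Taylor decomposition of the implicitly defined $C$.
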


\begin{proof}
We have $\exp(A)=I+O(r)$ and $\exp(B)=I+O(r)$, so that
$$
\exp(A)\exp(B)=I+O(r)\quad\hbox{and}\quad C=\exp^{-1}(I+O(r))=O(r).
$$
This implies that $\exp(C)=I+C+O(r^2)$. On the other hand,
$$
\exp(A)\exp(B)=(I+A+O(r^2))(I+B+O(r^2))=I+A+B+O(r^2).
$$
Therefore, $C=A+B+O(r^2)$. 

This process can be continued to compute the higher order terms in the expansion of $C$.
We write $C=A+B+S$ where $S=O(r^2)$. Then
\begin{align*}
\exp(C)&=I+(A+B+S)+(A+B+S)^2/2+O(r^3)\\
&=I+A+B+S+(A+B)^2/2+O(r^3).
\end{align*}
On the other hand,
\begin{align*}
\exp(A)\exp(B)&=(I+A+A^2/2+O(r^3))(I+B+B^2/2+O(r^3))\\
&=I+A+B+AB+A^2/2+B^2/2+O(r^3).
\end{align*}
Hence,
\begin{align*}
S&=(I+A+B+AB+A^2/2+B^2/2)-(I+A+B+(A+B)^2/2)+O(r^3)\\
&=\frac{1}{2}[A,B]+O(r^3).
\end{align*}
This implies the lemma. 
\end{proof}

The proof of Lemma \ref{l:c} can be generalised to prove the
\emph{Campbell--Baker--Hausdorff formula}:
\begin{equation}\label{eq:baker}
C(A,B)=\sum_{n=0}^N C_n(A,B)+O(r^{N+1}),
\end{equation}
where $C_n$ are explicit homogeneous polynomials of degree $n$
which are expressed in terms of Lie brackets.

For Lemma \ref{l:c}, we deduce:

\begin{corollary}\label{c:prod}
For every $A,B\in \hbox{\rm M}_d(\mathbb{R})$,
\begin{enumerate}
\item[(i)] $\exp(A+B)=\lim_{n\to\infty} (\exp(A/n)\exp(B/n))^n$,
\item[(ii)] $\exp([A,B])=\lim_{n\to\infty} (\exp(A/n)\exp(B/n)\exp(-A/n)\exp(-B/n))^{n^2}$.
\end{enumerate}
\end{corollary}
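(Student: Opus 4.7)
The overall strategy is to apply Lemma \ref{l:c} with the small matrices $A/n$ and $B/n$ (so that the scale parameter is $r = O(1/n)$), thereby writing the expressions inside the limits as a single exponential $\exp(C)$ for an explicit $C$. Raising to a power then reduces to using the identity $\exp(C)^k = \exp(kC)$ (valid by Lemma \ref{l:exp}(iii) since a matrix commutes with itself), after which continuity of $\exp$ finishes the job.

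For part (i), Lemma \ref{l:c} gives
\[
\exp(A/n)\exp(B/n) = \exp\!\left(\tfrac{A+B}{n} + \tfrac{1}{2n^2}[A,B] + O(1/n^3)\right).
\]
Taking $n$-th powers yields $\exp\!\bigl(A + B + O(1/n)\bigr)$, which tends to $\exp(A+B)$ as $n\to\infty$ by continuity of $\exp$.

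For part (ii), I would compute the exponential form of the commutator product in two steps. Applying Lemma \ref{l:c} to $A/n,B/n$ and to $-A/n,-B/n$ gives
\[
\exp(A/n)\exp(B/n) = \exp(C_1),\qquad \exp(-A/n)\exp(-B/n) = \exp(C_2),
\]
with $C_1 = \frac{A+B}{n} + \frac{1}{2n^2}[A,B] + O(1/n^3)$ and $C_2 = -\frac{A+B}{n} + \frac{1}{2n^2}[A,B] + O(1/n^3)$. Applying Lemma \ref{l:c} once more to $C_1,C_2$ (both of size $O(1/n)$) yields $\exp(C_1)\exp(C_2) = \exp(C_3)$ with
\[
C_3 = C_1 + C_2 + \tfrac{1}{2}[C_1,C_2] + O(1/n^3).
\]
The sum $C_1 + C_2 = \frac{1}{n^2}[A,B] + O(1/n^3)$, and the key observation is that the leading terms of $C_1$ and $C_2$ are negatives of one another, so $[C_1,C_2] = [\frac{A+B}{n}, -\frac{A+B}{n}] + O(1/n^3) = O(1/n^3)$. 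Hence $C_3 = \frac{1}{n^2}[A,B] + O(1/n^3)$, and raising to the $n^2$-th power gives $\exp\!\bigl([A,B] + O(1/n)\bigr) \to \exp([A,B])$.

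The main subtlety, and what I expect to be the place one could slip, is the bookkeeping of the error terms: because we raise to the $n^2$-th power in (ii), anything cruder than $O(1/n^3)$ in the exponent of the single four-factor product would produce a non-vanishing error. The calculation works out only because the two leading $O(1/n)$ contributions from $C_1$ and $C_2$ cancel in both $C_1+C_2$ and in $[C_1,C_2]$, which is the whole point of Lemma \ref{l:c} and the reason the commutator $[A,B]$ emerges in the limit.
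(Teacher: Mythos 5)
Your proof is correct and follows essentially the same route as the paper: part (i) is verbatim the paper's argument via Lemma \ref{l:c}, and part (ii), which the paper leaves to the reader as ``similar,'' is carried out by you exactly as intended, with the crucial cancellation of the $O(1/n)$ terms in both $C_1+C_2$ and $[C_1,C_2]$ correctly tracked so that the error after raising to the $n^2$-th power still vanishes.
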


\begin{proof}
By Lemma \ref{l:c},
\begin{equation}\label{eq:cc_1}
\exp(A/n)\exp(B/n)=\exp(C_n),\quad\hbox{where $C_n=(A+B)/n+O(1/n^2)$.}
\end{equation}
Hence,
$$
(\exp(A/n)\exp(B/n))^n=\exp(A+B+O(1/n))\to \exp(A+B)
$$
as $n\to\infty$. This proves (i). 

The proof of (ii) is similar and is left to the reader. 
\end{proof}

Now we are ready to prove Proposition \ref{p:lie_algebra}.

\begin{proof}[Proof of Proposition \ref{p:lie_algebra}]
It is clear for the definition that if $A\in \mathcal{L}(G)$, then $\mathbb{R}A\subset \mathcal{L}(G)$.
Hence, it remains to show that for $A,B\in \mathcal{L}(G)$, 
the matrices $A+B$ and $[A,B]$ also belong to $\mathcal{L}(G)$.

We shall use the following observation:
\begin{equation}\label{eq:obs}
\hbox{if $C_n\in \exp^{-1}(G)$, $C_n\to 0$, $s_n C_n\to D$ for some $s_n\in\mathbb{R}$,
then $D\in \mathcal{L}(G)$.}
\end{equation}
To prove this observation, we need to show that $\exp(tD) \in G$
for all $t\in\mathbb{R}$. Let $m_n=\lfloor ts_n \rfloor$. Then 
$$
\lim_{n\to\infty} m_n C_n=  \lim_{n\to\infty} ts_n C_n=tD.
$$
Since $C_n\in \exp^{-1}(G)$, we have $\exp(m_n C_n)=\exp(C_n)^{m_n}\in G$. Hence, since $G$ is closed,
$\exp(tD)\in G$, which proves the observation.

Now let us prove that $A+B\in \mathcal{L}(G)$ when $A,B\in \mathcal{L}(G)$. For sufficiently large $n$,
$$
\exp(A/n)\exp(B/n)=\exp(C_n),
$$
where $C_n\to 0$. It is clear that $C_n\in \exp^{-1}(G)$.
By (\ref{eq:cc_1}), $nC_n\to A+B$. Hence, the above observation implies that
$A+B\in\mathcal{L}(G)$.

Similarly,  using Corollary \ref{c:prod}(ii), we obtain that if $A,B\in \mathcal{L}(G)$, then
$$
\exp(A/n)\exp(B/n)\exp(-A/n)\exp(-B/n)=\exp(C_n),
$$
where $C_n\in \exp^{-1}(G)$, $C_n\to 0$ and $n^2 C_n\to [A,B]$.
Therefore, the above observation shows that $[A,B]\in \mathcal{L}(G)$.
\end{proof}

The exponential map can be used to show that a Lie group locally looks like the Euclidean space
of dimension $\dim(\mathcal{L}(G))$ and has a structure of an analytic manifold.

\begin{proposition}\label{p:nbhd}
The exponential map defines a bijection between a neighbourhood of zero in $\mathcal{L}(G)$
and a neighbourhood of identity in $G$.
\end{proposition}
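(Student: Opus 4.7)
The plan is to bootstrap from the already-established fact that $\exp$ is a local analytic diffeomorphism on $\hbox{M}_d(\RR)$ near the origin. Injectivity of $\exp$ restricted to a small neighbourhood of $0$ in $\cL(G)$ is then automatic, and $\exp(\cL(G))\subset G$ by definition. The substantive task is surjectivity: to exhibit a neighbourhood of the identity in $G$ every element of which arises as $\exp(X)$ for some $X\in\cL(G)$.

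To this end, I fix a linear complement $\cV$ of $\cL(G)$ in $\hbox{M}_d(\RR)$, so that $\hbox{M}_d(\RR)=\cL(G)\oplus\cV$, and consider
$$
\Phi:\cL(G)\oplus\cV\to \hbox{\rm GL}_d(\RR),\qquad \Phi(Y,Z)=\exp(Y)\exp(Z).
$$
Since the partial derivatives of $\Phi$ at the origin in the $Y$ and $Z$ directions are the respective inclusions into $\hbox{M}_d(\RR)$, the total derivative $(D\Phi)_{(0,0)}$ is the identity, and the Inverse Function Theorem yields an analytic diffeomorphism between a neighbourhood of $(0,0)$ and a neighbourhood of $I$ in $\hbox{\rm GL}_d(\RR)$. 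Consequently every $g\in\hbox{\rm GL}_d(\RR)$ sufficiently close to $I$ admits a unique decomposition $g=\exp(Y)\exp(Z)$ with $Y\in\cL(G)$, $Z\in\cV$, both small.

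The crux is then to show that when $g\in G$, the transverse component $Z$ must vanish. Suppose for contradiction there is a sequence $g_n\in G$ with $g_n\to I$ and $g_n\notin\exp(\cL(G))$. Decomposing $g_n=\exp(Y_n)\exp(Z_n)$ via $\Phi$, we have $Y_n,Z_n\to 0$, and necessarily $Z_n\ne 0$ (else $g_n=\exp(Y_n)\in\exp(\cL(G))$). Because $\exp(-Y_n)\in G$ and $g_n\in G$, we deduce $\exp(Z_n)=\exp(-Y_n)g_n\in G$, so $Z_n\in\exp^{-1}(G)$. Setting $s_n=1/\|Z_n\|$, the unit vectors $s_nZ_n\in\cV$ have a subsequential limit $W\in\cV$ with $\|W\|=1$. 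By the observation (\ref{eq:obs}) used in the proof of Proposition \ref{p:lie_algebra}, $W\in\cL(G)$; but $\cL(G)\cap\cV=\{0\}$, contradicting $\|W\|=1$.

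Hence $\exp(\cL(G))$ contains a neighbourhood of $I$ in $G$, and restricting to a neighbourhood of $0$ in $\cL(G)$ on which $\exp$ is already injective delivers the desired bijection. The main obstacle is precisely this surjectivity argument: one must exclude the possibility that $G$ carries ``extra'' directions near $I$ which are transverse to $\exp(\cL(G))$, and this is where the closedness hypothesis enters decisively, through the observation (\ref{eq:obs}).
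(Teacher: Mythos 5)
Your argument is correct and is essentially the paper's own proof: your map $\Phi(Y,Z)=\exp(Y)\exp(Z)$ on $\mathcal{L}(G)\oplus\mathcal{V}$ is exactly the paper's $F(A)=\exp(\pi(A))\exp(\bar\pi(A))$, and the surjectivity step via the Inverse Function Theorem followed by the contradiction obtained from the observation (\ref{eq:obs}) applied to the normalised transverse components $Z_n/\|Z_n\|$ is the same. One cosmetic adjustment: to get the stated local bijection you should run the contradiction against $\exp$ of a fixed small neighbourhood of $0$ in $\mathcal{L}(G)$ rather than against all of $\exp(\mathcal{L}(G))$ --- if $Z_n=0$ then $g_n=\exp(Y_n)$ with $Y_n\to 0$, so $g_n$ already lies in that small image --- and with this refined negation your argument goes through verbatim.
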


\begin{proof}
We write $\hbox{M}_d(\mathbb{R})=\mathcal{L}(G)\oplus V$, where $V$ is a complementary subspace,
and denote by $\pi: \hbox{M}_d(\mathbb{R})\to\mathcal{L}(G)$ and
$\bar \pi: \hbox{M}_d(\mathbb{R})\to V$ the corresponding projection maps.
Let
$$
F:\hbox{M}_d(\mathbb{R})\to \hbox{M}_d(\mathbb{R}): A\mapsto \exp(\pi(A))\exp(\bar\pi(A)).
$$
We have
$$
\frac{d}{dt}(\exp(\pi(tA))\exp(\bar\pi(tA)))|_{t=0}=\pi(A)+\bar\pi(A)=A.
$$
Hence, $(DF)_0=I$, and it follows from the Inverse Function Theorem that
for sufficiently small neighbourhood $\mathcal{O}$ of $0$, the map $F:\mathcal{O}\to F(\mathcal{O})$
is a bijection. 

We already remarked above that the exponential map defines a bijection between a neighbourhood of 
$0$ in $\hbox{M}_d(\mathbb{R})$ and a neighborhood of $I$ in $\hbox{M}_d(\mathbb{R})$. 
To prove the proposition, it remains to show that
$\exp(\mathcal{O}\cap \mathcal{L}(G))\subset G$ is a neighbourhood of identity in $G$.
Suppose on the contrary that the set $\exp(\mathcal{O}\cap \mathcal{L}(G))$ 
is not a neighbourhood of identity in $G$. Then since the set $\exp(\mathcal{O})\cap G$
is a neighbourhood of identity in $G$, it follows that there exists a sequence $B_n\to 0$
such that $\exp(B_n)\in G$ and $B_n\notin \mathcal{L}(G)$. We can write $\exp(B_n)=F(A_n)$
with some matrix $A_n$ such that $A_n\to 0$. Since $B_n\notin \mathcal{L}(G)$, we have $\bar \pi(A_n)\ne 0$.
We note that 
$$
\exp(\bar \pi(A_n))=\exp(\pi(A_n))^{-1}\exp(B_n)\in G.
$$
After passing to a subsequence, we may assume that $\frac{\bar \pi(A_n)}{\|\bar \pi(A_n)\|}\to C$
for some matrix $C\in \hbox{M}_d(\mathbb{R})$ with $\|C\|=1$. It is clear that $C\in V$. 
On the other hand, it follows from the observation (\ref{eq:obs})
that $C\in \mathcal{L}(G)$. This contradiction completes the proof of the proposition.
\end{proof}

\begin{remark}\label{r:lie}
{\rm 
The proof of Proposition \ref{p:nbhd} shows that in a neighbourhood of identity, $G$ coincides  
with the zero locus $\bar \pi \circ F^{-1}$. Moreover, $\mathcal{L}(G)$  is the tangent space of 
this locus at identity.
}
\end{remark}

Proposition \ref{p:nbhd} can be used to define a \emph{manifold structure} on a Lie group $G$.
We fix a neighbourhood $\mathcal{U}$ of zero in $\hbox{M}_d(\mathbb{R})$ such that $\exp$ is an analytic 
bijection $\mathcal{U}\to \exp(\mathcal{U})$ and set $\mathcal{O}=\mathcal{L}(G)\cap \mathcal{U}$.
For every $g\in G$, we define a \emph{coordinate chart} around $g$ by
$$
\phi:\mathcal{O}\to G: x\mapsto g\exp(x).
$$ 
This coordinate chart defines a bijection between $\mathcal{O}$ and a neighbourhood of $g$.
If $\psi:\mathcal{O}\to G$ is another coordinate chart, then the map
\begin{equation}\label{eq:change}
\psi^{-1}\phi: \phi^{-1}(\phi(\mathcal{O})\cap \psi(\mathcal{O}))\to \psi^{-1}(\phi(\mathcal{O})\cap
\psi(\mathcal{O}))
\end{equation}
is analytic. We say that a map $f:G\to \mathbb{R}^k$ is \emph{analytic} if $f\circ \phi$ is analytic
for all coordinate charts. In particular, the product map $G\times G\to G:(g_1,g_2)\mapsto g_1g_2$
and the inverse map $G\to G: g\mapsto g^{-1}$ are analytic.
Now a Lie group $G$ can be considered as collection of coordinate charts
which are glued together according to the maps (\ref{eq:change}) and such that
the group operations are analytic. This leads to the notion of an abstract Lie group.
For simplicity of exposition, we restrict our discussion to matrix Lie groups.

\subsection{Lie-group homomorphisms}

In this section we study continuous homomorphisms $f:G_1\to G_2$ between Lie groups.
We show that they induce a Lie-algebra homomorphisms between the corresponding Lie algebras,
and that every continuous homomorphism is automatically analytic.

\begin{theorem}\label{th:hom}
Let $f:G_1\to G_2$ be a continuous homomorphism between Lie groups $G_1$ and $G_2$.
Then there exists a Lie-algebra homomorphism $Df:\mathcal{L}(G_1)\to \mathcal{L}(G_2)$
such that 
$$
\exp(Df(X))=f(\exp(X))\quad \hbox{for all $X\in \mathcal{L}(G_1)$.}
$$
\end{theorem}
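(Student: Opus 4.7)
The plan is to exploit Theorem \ref{th:one-par} to define $Df$ pointwise and then to derive linearity and the bracket identity from the product formulas of Corollary \ref{c:prod}. For each $X \in \mathcal{L}(G_1)$, consider the map $t \mapsto f(\exp(tX))$. Because $f$ is a continuous homomorphism, this is a one-parameter group $\mathbb{R} \to G_2 \subseteq \hbox{GL}_d(\mathbb{R})$. Theorem \ref{th:one-par}, applied in the ambient general linear group, furnishes a unique matrix $Y \in \hbox{M}_d(\mathbb{R})$ with $f(\exp(tX)) = \exp(tY)$ for every $t$. Since $\exp(tY) \in G_2$ for all $t$, we have $Y \in \mathcal{L}(G_2)$; set $Df(X) := Y$. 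The choice $t = 1$ gives the required relation $f(\exp(X)) = \exp(Df(X))$.

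Next I would verify linearity. Homogeneity is immediate from uniqueness: $\exp(t(sDf(X))) = f(\exp(tsX))$ matches the defining relation for $sX$, so $Df(sX) = s\,Df(X)$. For additivity, apply Corollary \ref{c:prod}(i) to $tX$ and $tY$ to write
$$
\exp(t(X+Y)) = \lim_{n\to\infty}\bigl(\exp(tX/n)\exp(tY/n)\bigr)^n.
$$
Apply $f$ to both sides. Because $f$ is a continuous homomorphism, the right-hand side becomes $\lim_n\bigl(\exp(tDf(X)/n)\exp(tDf(Y)/n)\bigr)^n$, and another invocation of Corollary \ref{c:prod}(i), this time inside $G_2$, identifies the limit as $\exp(t(Df(X)+Df(Y)))$. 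The left-hand side equals $\exp(tDf(X+Y))$. Comparing these two one-parameter groups and differentiating at $t=0$ yields $Df(X+Y) = Df(X) + Df(Y)$.

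The bracket identity follows by the same scheme with Corollary \ref{c:prod}(ii) applied to $tX$ and $Y$, using $[tX,Y] = t[X,Y]$ to produce $\exp(tDf([X,Y])) = \exp(t[Df(X), Df(Y)])$ for every $t$; differentiating at $0$ gives $Df([X,Y]) = [Df(X), Df(Y)]$. The main delicate point is pushing $f$ through the finite-stage products as a single group-theoretic operation \emph{before} replacing the individual factors by exponentials of $Df(X)$ and $Df(Y)$; continuity of $f$ is then what permits the passage to the limit. The fact that the extracted generator $Y$ automatically lies in $\mathcal{L}(G_2)$, rather than merely in $\hbox{M}_d(\mathbb{R})$, is a gift from the uniqueness clause of Theorem \ref{th:one-par}, so no separate argument is needed to confine $Df$ to the target Lie algebra.
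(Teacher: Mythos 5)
Your proposal is correct and follows essentially the same route as the paper: define $Df(X)$ via Theorem \ref{th:one-par}, obtain homogeneity from uniqueness, and derive additivity and the bracket identity from Corollary \ref{c:prod} together with continuity of $f$, the only cosmetic difference being that you cancel the outer exponentials by differentiating the resulting one-parameter group identity at $t=0$, whereas the paper uses homogeneity to reduce to small $X_1,X_2$ where $\exp$ is injective. One small correction to your closing remark: the fact that $Df(X)\in\mathcal{L}(G_2)$ is not a consequence of the uniqueness clause but of the relation $\exp(tDf(X))=f(\exp(tX))\in G_2$ for all $t$, which is exactly the definition of $\mathcal{L}(G_2)$ (as you in fact argued earlier in the proposal).
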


\begin{proof}
For every $X\in \mathcal{L}(G_1)$, the map $t\mapsto f(\exp(tX))$ is a one-parameter subgroup.
Hence, by Theorem \ref{th:one-par}, we have $f(\exp(tX))=\exp(tY)$ for some $Y\in\hbox{M}_d(\mathbb{R})$.
Since $f(G_1)\subset G_2$, we obtain $Y\in\mathcal{L}(G_2)$.
It is also clear that such $Y$ is uniquely defined.
We set $Df(X)=Y$. It follows from the definition that
\begin{equation}\label{eq:scale}
Df(sX)= s Df(X)\quad\hbox{for all $s\in \mathbb{R}$.}
\end{equation}
We claim that $Df$ is a Lie-algebra homomorphism, namely,
we need to check that for every $X_1,X_2\in\mathcal{L}(G_1)$,
\begin{align}\label{eq:id}
Df(X_1+X_2)&=Df(X_1)+Df(X_2),\\
Df([X_1,X_2])&=[Df(X_1),Df(X_2)].
\end{align}

To verify the first identity, we use Corollary \ref{c:prod}(i) and continuity of $f$:
\begin{align*}
\exp(Df(X_1+X_2))&=f(\exp(X_1+X_2))=\lim_{n\to\infty} f((\exp(X_1/n)\exp(X_2/n))^n)\\
&=\lim_{n\to\infty} (f(\exp(X_1/n))f(\exp(X_2/n)))^n\\
&=\lim_{n\to\infty} (\exp(Df(X_1)/n))\exp(Df(X_2)/n))^n\\
&=\exp(Df(X_1)+Df(X_2)).
\end{align*}
Because of (\ref{eq:scale}), it is sufficient to verify (\ref{eq:id}) when
$X_1$ and $X_2$ are sufficiently small. Then the exponential map is one-to-one,
and the first identity follows. 

The second identity can be proved similarly
with a help of Corollary \ref{c:prod}(ii).
\end{proof}

Since the exponential map is analytic, Theorem~\ref{th:hom}
implies

\begin{corollary}\label{c:annn}
Any continuous homomorphism between Lie groups is analytic.
\end{corollary}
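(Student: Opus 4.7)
The plan is to transport the problem into local coordinates using the charts on $G_1$ and $G_2$ constructed after Proposition~\ref{p:nbhd}, and then read off analyticity from the formula $\exp(Df(X))=f(\exp(X))$ supplied by Theorem~\ref{th:hom}. Since the paper's notion of analyticity is local and chart-dependent, it suffices to verify the criterion at one chart around each point $g\in G_1$, and by translating we may as well take the canonical chart $\phi:\mathcal{O}_1\to G_1$, $x\mapsto g\exp(x)$ around $g$, together with the chart $\psi:\mathcal{O}_2\to G_2$, $y\mapsto f(g)\exp(y)$ around $f(g)$, where $\mathcal{O}_i\subset\mathcal{L}(G_i)$ is a small neighbourhood of $0$ on which $\exp$ is a diffeomorphism onto its image.

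The key computation is then very short. For $x\in\mathcal{O}_1$ small enough that $f(g\exp(x))$ lies in the image of $\psi$, the homomorphism property and Theorem~\ref{th:hom} give
$$
f(\phi(x)) \;=\; f(g)\,f(\exp(x)) \;=\; f(g)\exp(Df(x)),
$$
so that $\psi^{-1}(f(\phi(x)))=Df(x)$. Thus in these coordinates $f$ is literally the map $Df$ restricted to a neighbourhood of $0$ in $\mathcal{L}(G_1)$. By Theorem~\ref{th:hom} the map $Df$ is a Lie-algebra homomorphism, hence in particular $\mathbb{R}$-linear (additivity plus the scaling identity \eqref{eq:scale}), and any linear map between finite-dimensional real vector spaces is analytic.

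Composing with the analytic chart maps $\phi$ and $\psi$, one sees that $f\circ\phi=\psi\circ Df$ on a neighbourhood of $0$, which is a composition of analytic maps. Since $g\in G_1$ was arbitrary, this shows $f$ is analytic everywhere on $G_1$.

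The only mild technical point, and what I would guard against most carefully, is ensuring that the image $f(\phi(x))$ actually lies in the chart $\psi(\mathcal{O}_2)$ so that $\psi^{-1}$ may be applied: this requires choosing $\mathcal{O}_1$ small enough that $Df(\mathcal{O}_1)\subset\mathcal{O}_2$, which is possible by the continuity (i.e.\ boundedness) of the linear map $Df$. Beyond this book-keeping there is no real obstacle; all the substantive work has already been done in Theorem~\ref{th:hom}, and Corollary~\ref{c:annn} is essentially just the observation that the formula $\exp\circ Df=f\circ\exp$ expresses $f$, in exponential coordinates, as the linear map $Df$.
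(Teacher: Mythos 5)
Your proposal is correct and follows exactly the argument the paper intends: the paper dispatches the corollary in one line (``since the exponential map is analytic, Theorem~\ref{th:hom} implies\ldots''), and what you have written is just that argument with the details filled in --- in the exponential charts around $g$ and $f(g)$, the identity $f(\exp(X))=\exp(Df(X))$ shows $f$ is the linear map $Df$, hence analytic. No issues; the chart-containment point you flag is the right (and only) bit of book-keeping.
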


In view of Theorem \ref{th:hom}, it is natural to ask whether
every Lie-algebra homomorphism $F:\mathcal{L}(G_1)\to \mathcal{L}(G_2)$
corresponds to a homomorphism $f:G_1\to G_2$ of the corresponding Lie groups.
As the following example demonstrates, this is not always the case.
Let
$$
G=\hbox{O}_2(\mathbb{R})=\{g:\, {}^t g g=I\}
=\left\{
\pm \left(
\begin{tabular}{rr}
$\cos\theta$ & $\sin \theta$ \\
$-\sin \theta$ & $\cos\theta$
\end{tabular}
\right): \theta\in [0,2\pi)
\right\}.
$$ 
Its Lie algebra
$$
\mathcal{L}(G)=\{x:\, {}^t X+ X=0\}
=\left\{
\left(
\begin{tabular}{ll}
$0$ & $\theta$ \\
$-\theta$ & $0$
\end{tabular}
\right): \theta\in \mathbb{R}
\right\}
$$ 
has trivial Lie bracket operation, and every linear map $\theta\mapsto c\, \theta$
defines a Lie-algebra homomorphism $\mathcal{L}(G)\to \mathcal{L}(G)$. 
However, this linear map corresponds to a homomorphism
$G\to G$ only when $c\in \mathbb{Z}$. This example demonstrates that
the Lie algebra captures only local structure of its Lie group.

It turns out that for simply connected Lie groups the answer to the above question is positive.
Recall that 

\begin{definition}\label{def:simply_connected}
{\rm 
A topological space $X$ is called \emph{simply connected} if $X$ is path connected
and for any two paths between $x_0,x_1\in X$ can be continuously deformed into each other,
namely, for any continuous maps $\alpha_0,\alpha_1:[0,1]\to X$ such that $\alpha_0(0)=\alpha_1(0)=x_0$
and $\alpha_0(1)=\alpha_1(1)=x_1$, there exists a continuous map $\alpha:[0,1]^2\to X$ such that
$$
\alpha(0,\cdot)=\alpha_0,\;\; \alpha(1,\cdot)=\alpha_1,\;\; \alpha(\cdot, 0)=x_0,\;\; \alpha(\cdot, 1)=x_1.
$$
}
\end{definition}

\begin{theorem}\label{th:simply}
If a Lie group $G_1$ is simply connected and $F:\mathcal{L}(G_1)\to \mathcal{L}(G_2)$
is a Lie-algebra homomorphism, then there exists a smooth homomorphism $f:G_1\to G_2$ 
such that $F=Df$.
\end{theorem}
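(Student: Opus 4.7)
The plan is to construct $f$ in three stages: first define it locally near the identity via the exponential map, using the Campbell--Baker--Hausdorff formula to check local multiplicativity; then extend globally along paths in $G_1$; and finally invoke simple connectivity to verify that the extension does not depend on the chosen path.

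For the local step, shrink a neighbourhood $\mathcal{O}$ of $0$ in $\mathcal{L}(G_1)$ so that $\exp$ restricts to an analytic bijection $\mathcal{O}\to U:=\exp(\mathcal{O})\subset G_1$ (Proposition \ref{p:nbhd}), the series (\ref{eq:baker}) converges on $\mathcal{O}\times\mathcal{O}$ to give $\exp(X)\exp(Y)=\exp(C(X,Y))$ with $C(X,Y)\in\mathcal{L}(G_1)$, and the image $F(\mathcal{O})$ satisfies the analogous properties in $\mathcal{L}(G_2)$. Define $\tilde f\colon U\to G_2$ by $\tilde f(\exp(X))=\exp(F(X))$. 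Because each $C_n$ in (\ref{eq:baker}) is a polynomial in iterated Lie brackets of $X$ and $Y$ and $F$ is a Lie-algebra homomorphism, $F(C(X,Y))=C(F(X),F(Y))$. Consequently $\tilde f(\exp(X)\exp(Y))=\tilde f(\exp(X))\tilde f(\exp(Y))$ on a neighbourhood of identity, so $\tilde f$ is an analytic local homomorphism.

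For the global extension, let $g\in G_1$. Since $G_1$ is connected, pick a continuous path $\gamma\colon[0,1]\to G_1$ with $\gamma(0)=e$, $\gamma(1)=g$, and a subdivision $0=t_0<\cdots<t_N=1$ fine enough that $u_i:=\gamma(t_{i-1})^{-1}\gamma(t_i)\in U$ for each $i$. Set
$$
f_\gamma(g):=\tilde f(u_1)\tilde f(u_2)\cdots\tilde f(u_N).
$$
A straightforward refinement argument using local multiplicativity of $\tilde f$ shows that $f_\gamma(g)$ is independent of the subdivision. To show independence of $\gamma$, first treat the case of two paths $\gamma_0,\gamma_1$ that stay uniformly close: for a common fine enough subdivision set $a_i=\gamma_0(t_{i-1})^{-1}\gamma_0(t_i)$, $b_i=\gamma_1(t_{i-1})^{-1}\gamma_1(t_i)$, $c_i=\gamma_0(t_i)^{-1}\gamma_1(t_i)$; the identity $c_{i-1}b_i=a_ic_i$ lifts under $\tilde f$ (after further refinement so that every intermediate product lies in $U$), and a telescoping manipulation together with $c_0=c_N=e$ yields $f_{\gamma_0}(g)=f_{\gamma_1}(g)$. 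For the general case, given any two paths and a homotopy $\alpha\colon[0,1]^2\to G_1$ between them (which exists by simple connectivity), use uniform continuity of $\alpha$ to partition $[0,1]$ into $0=s_0<\cdots<s_M=1$ so that consecutive paths $\alpha(\cdot,s_{k-1})$ and $\alpha(\cdot,s_k)$ are uniformly close in the above sense, then iterate the close-paths step $M$ times. This shows $f(g):=f_\gamma(g)$ is well defined; concatenating a path from $e$ to $g_1$ with the left translate by $g_1$ of a path from $e$ to $g_2$ gives $f(g_1g_2)=f(g_1)f(g_2)$. The identity $f\circ\exp=\exp\circ F$ on $\mathcal{O}$ shows $Df=F$, and analyticity of $f$ follows from Corollary \ref{c:annn}.

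The hard part is precisely this monodromy argument, that $g\mapsto f_\gamma(g)$ is independent of the chosen path. It hinges on organising the telescoping identity between two uniformly close paths cleanly enough that it propagates across every row of the homotopy square, and this is where simple connectivity enters essentially: as the $\mathrm{O}_2(\mathbb{R})$ example in the excerpt already demonstrates, without it monodromy around a non-contractible loop can genuinely force a would-be lift to be multi-valued.
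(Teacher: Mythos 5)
Your proposal is correct and follows essentially the same route as the paper's proof: define $f$ locally as $\exp\circ F\circ\exp^{-1}$, use the Campbell--Baker--Hausdorff formula together with the fact that $F$ preserves brackets to get local multiplicativity, extend along paths via partitions, and invoke simple connectivity for path-independence, then verify the homomorphism property by concatenating paths. Your telescoping argument between uniformly close paths (iterated across the rows of the homotopy square) is just a more explicit rendering of the paper's ``finitely many perturbations'' step, not a different method.
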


\begin{proof}
We fix a small neighbourhood $\mathcal{U}$ of identity in $G_1$
such that the exponential map defines a bijection on $\mathcal{U}$.
We define 
\begin{equation}\label{eq:f}
f(g)=\exp(F(\exp^{-1}(g)))\quad\hbox{for $g\in \mathcal{U}$}.
\end{equation}
In order to define $f$ for general $g\in G_1$ we take a continuous
path $\alpha:[0,1]\to G_1$ from $I$ to $g$ and take a partition $\{t_i\}_{i=0}^m$
of $[0,1]$ such that $\alpha(t_{i+1})\in \mathcal{U}\alpha(t_i)$. Then
$$
g=\alpha(1)=\alpha(t_m)\alpha(t_{m-1})^{-1}\cdots \alpha(t_1)\alpha(t_0)^{-1}.
$$
We define 
\begin{equation}\label{eq:fg}
f(g)=f(\alpha(t_m)\alpha(t_{m-1})^{-1})\cdots f(\alpha(t_1)\alpha(t_0)^{-1}).
\end{equation}
We shall show that this definition does not depend on the choices of the path and the partition.
Take a neighbourhood $\mathcal{V}$ of identity in $G_1$, and 
let us consider a continuous path $\beta:[0,1]\to G_1$ which is a continuous perturbation of $\alpha$
defined as follows. We replace the map $\alpha$ on one of the intervals
$[t_i,t_{i+1}]$ by another map such that
for some $s\in (t_i,t_{i+1})$, we have 
$$
\beta(t_{i+1})\beta(s)^{-1}, \beta(s)\beta(t_{i})^{-1}\in \mathcal{V},
$$
and refine the partition by adding the point $s$.
This gives the same value $f(g)$ if 
\begin{equation}\label{eq:beta}
f(\beta(t_{i+1})\beta(s)^{-1})f(\beta(s)\beta(t_{i})^{-1})
=f(\beta(t_{i+1})\beta(t_{i})^{-1}).
\end{equation}
We write 
$$
\beta(t_{i+1})\beta(s)^{-1}=\exp(X)\quad\hbox{and}\quad \beta(s)\beta(t_{i})^{-1}=\exp(Y)
$$
for some $X,Y\in \exp^{-1}(\mathcal{V})$.
Then $\beta(t_{i+1})\beta(t_{i})^{-1}=\exp(X)\exp(Y)$.
We apply the Baker--Campbell-Hausdorff formula (\ref{eq:baker}).
Assuming that $\mathcal{V}$ is sufficiently small, we obtain
\begin{align*}
f(\exp(X)\exp(Y))&= f(\exp(C(X,Y)))=\exp(F(C(X,Y)))\\
&=\exp(C(F(X),F(Y)))=\exp(F(X))\exp(F(Y))\\
&=f(\exp(X))f(\exp(Y)).
\end{align*}
This proves (\ref{eq:beta}). In particular, it is clear from the argument
that the definition of $f(g)$ in (\ref{eq:fg}) is independent of the partition.

Since $G_1$ is simply connected, given two paths $\alpha_0$ and $\alpha_1$ from $I$ to $g$, 
we can transform $\alpha_0$ to $\alpha_1$ using finitely many perturbations as above.
Hence, the definition of $f(g)$ in (\ref{eq:fg}) is independent of the path,
and we have a well-defined map $f:G_1\to G_2$.

Now we show that $f$ is a homomorphism. Let $g,h\in G$ and $\alpha,\beta:[0,1]\to G$
be paths from $I$ to $g$ and $h$ respectively. We define a path from $I$ to $gh$ by
$$
\gamma(t)=\left\{
\begin{tabular}{ll}
$\beta(2t)$, & $t\in [0,1/2]$,\\
$\alpha(2t-1)h,$ & $t\in [1/2,1]$.
\end{tabular}
\right.
$$
Then according to the definition of $f$, 
\begin{align*}
f(gh)&= \prod_{i=m}^1 f(\gamma(t_i)\gamma(t_{i-1})^{-1})\\
&=\left(\prod_{i=m}^{m'} f(\alpha(t_i)\alpha(t_{i-1})^{-1})\right) \left(\prod_{i=m'-1}^1
  f(\beta(t_i)\beta(t_{i-1})^{-1})\right)\\
&=f(g)f(h).
\end{align*}
Hence, $f$ is a homomorphism. 

Finally, the relation $Df=F$ follows from (\ref{eq:f}).
\end{proof}

\section{Invariant measures} 

The Lebesgue measure on $\mathbb{R}^d$ plays fundamental role in classical analysis.
It can be characterised uniquely (up to a scalar multiple) by the following properties:
\begin{itemize}
\item \emph{(invariance)} For every $f\in C_c(\mathbb{R}^d)$ and $a\in\mathbb{R}^d$,
$$
\int_{\mathbb{R}^d} f(a+x)\, dx= \int_{\mathbb{R}^d} f(x)\, dx.
$$
\item \emph{(local finiteness)} For every bounded measurable $B\subset \mathbb{R}^d$,
$$
\hbox{vol}(B)<\infty.
$$
\end{itemize}
In this section we discuss invariant measures for Lie groups. We first show that the invariant measure
is unique as in the case of the Lebesgue measure.

\begin{theorem}\label{th:unique}
A left-invariant locally finite Borel measure on a Lie group is unique up to a scalar multiple.
\end{theorem}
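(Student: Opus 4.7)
The plan is to apply the classical Haar measure uniqueness argument, which works for any locally compact Hausdorff group and in particular for matrix Lie groups. Let $\mu$ and $\nu$ be two nonzero left-invariant locally finite Borel measures on $G$. Since locally finite Borel measures on a $\sigma$-compact locally compact Hausdorff space are Radon (hence determined by their integration against $C_c(G)$), it is enough to show that the positive linear functionals $f \mapsto \int f\, d\mu$ and $f \mapsto \int f\, d\nu$ on $C_c(G)$ are proportional.

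First I would fix an approximate identity $\{\phi_n\} \subset C_c^+(G)$ whose supports shrink to $\{I\}$, normalised so that $\int \phi_n\, d\nu = 1$ for every $n$. (A covering argument using left-invariance shows that any nonzero such measure is strictly positive on every nonempty open set, so this normalisation is possible.) For arbitrary $f \in C_c(G)$ I would then rewrite
$$\int f\, d\mu = \int f\, d\mu \cdot \int \phi_n\, d\nu = \iint f(x)\phi_n(y)\, d\mu(x)\, d\nu(y),$$
replace $\phi_n(y)$ by $\phi_n(x^{-1}y)$ (free by left-invariance of $\nu$), apply Fubini to swap the order of integration, then replace $f(x)\phi_n(x^{-1}y)$ by $f(yx)\phi_n(x^{-1})$ (free by left-invariance of $\mu$), arriving at
$$\int f\, d\mu = \int \phi_n(x^{-1}) \left[\int f(yx)\, d\nu(y)\right] d\mu(x).$$

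Writing $c_n = \int \phi_n(x^{-1})\, d\mu(x)$ and $h_f(x) = \int f(yx)\, d\nu(y)$, the crucial claim is that $h_f$ is continuous at $x = I$ with $h_f(I) = \int f\, d\nu$. This follows from uniform continuity of $f \in C_c(G)$: for $x$ in a fixed compact neighbourhood of $I$, the right-translates $y \mapsto f(yx)$ are supported in a single compact set $K$ of finite $\nu$-measure and converge uniformly to $f$ as $x \to I$, so the integrals converge. Since $\phi_n(x^{-1})$ is supported in an arbitrarily small neighbourhood of $I$, I can pull $h_f(I)$ out of the displayed integral at cost $o(c_n)$, obtaining $\int f\, d\mu = c_n \int f\, d\nu + o(c_n)$. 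The sequence $c_n$ does not depend on $f$, so dividing (for any $f$ with $\int f\, d\nu \ne 0$) forces $c_n$ to converge to a common limit $c$ independent of $f$, which is exactly the statement $\mu = c\nu$.

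The main technical obstacle is the continuity of $h_f$ at the identity, which is where one genuinely needs that $\nu$ is a locally finite Borel measure (so $\nu(K) < \infty$) combined with uniform continuity of compactly supported continuous functions. A secondary point is ensuring $c_n > 0$, which follows from strict positivity of $\mu$ on small neighbourhoods of $I$ via the same left-translation covering argument used to normalise the $\phi_n$.
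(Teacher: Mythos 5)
Your argument is correct, but it is a genuinely different proof from the one in the paper. You run the classical approximate-identity argument: normalise a sequence $\phi_n$ with supports shrinking to $I$, use left-invariance of both measures plus Fubini to arrive at $\int f\,d\mu=\int\phi_n(x^{-1})\bigl(\int f(yx)\,d\nu(y)\bigr)d\mu(x)$, and then exploit right uniform continuity of $f\in C_c(G)$ to identify the constant $c=\lim_n c_n$ in the limit. The paper instead fixes a \emph{single} test function $\phi$ with $\int_G\phi\,dm_1=1$, uses the same Fubini/invariance manipulations to produce the auxiliary function $\Delta(y)=\int_G\phi(xy)\,dm_1(x)$ and the identity $\int_G f\,dm_1=\int_G f(y)\Delta(y^{-1})\Delta(y)\,dm_1(y)$, concludes $\Delta(y^{-1})\Delta(y)=1$ off an $m_1$-null set, and transfers this null set to $m_2$ by the preliminary trick of replacing $m_1$ by $m_1+m_2$ so that $m_2\ll m_1$; no limiting process, approximate identity, or uniform continuity is needed there. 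What your route buys is the avoidance of the absolute-continuity trick and of the function $\Delta$ altogether, at the price of the analytic work you correctly flag: uniform continuity of compactly supported functions, finiteness of $\nu$ on the relevant compact set, positivity of nonzero invariant measures on open sets (needed both to normalise $\phi_n$ and to get $c_n>0$), and a boundedness/convergence argument for $c_n$ (which is fine, since testing against one $f_0\ge 0$ with $\int f_0\,d\nu>0$ shows $c_n$ is bounded and convergent, and then the identity passes to the limit for all $f$). Both proofs share the same backbone — Fubini for $\sigma$-finite Radon measures, left-invariance, and the fact that locally finite Borel measures on the (second countable, $\sigma$-compact) group are determined by their integrals against $C_c(G)$ — so either is acceptable; yours is the more standard textbook argument, the paper's is shorter on analysis but relies on the clever $m_1\mapsto m_1+m_2$ reduction.
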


\begin{proof}
Let $m_1$ and $m_2$ be nonzero left-invariant locally finite Borel measures on a Lie group $G$.
Replacing $m_1$ by $m_1+m_2$, we may assume that $m_2$ is absolutely continuous with
respect to $m_1$. Namely, if $m_1(B)=0$ for some measurable $B\subset G$, then $m_2(B)=0$ as well. 

We fix a nonnegative $\phi\in C_c(G)$ with $\int_G \phi \, dm_1=1$ and set $c=\int_G\phi\, dm_2$.
Using the Fubini Theorem and invariance of the measures, we deduce that for every $f\in C_c(G)$,
\begin{align}\label{eq:Delta0}
c\cdot \int_G f(x)\, dm_1(x)&=\int_{G\times G} f(x)\phi(y)\, dm_1(x)dm_2(y)\\
&=\int_G\left(\int_Gf(x)\, dm_1(x)\right)\phi(y)\, dm_2(y)\nonumber \\
&=\int_G\left(\int_Gf(y^{-1}x)\, dm_1(x)\right)\phi(y)\, dm_2(y)\nonumber \\
&=\int_G\left(\int_Gf((x^{-1}y)^{-1})\phi(y)\, dm_2(y)\right)\, dm_1(x)\nonumber \\
&=\int_{G\times G} f(y^{-1})\phi(xy)\, dm_1(x)dm_2(y)\nonumber \\
&=\int_G f(y^{-1})\Delta(y)\, dm_2(y),\nonumber
\end{align}
where $\Delta(y)=\int_G \phi(xy)\, dm_1(x)$.
Applying the same argument with $m_2$ replaced by $m_1$ twice, we obtain
\begin{align}\label{eq:Delta}
1\cdot \int_G f(x)\, dm_1(x)&= \int_G f(y^{-1})\Delta(y)\, dm_1(y)\\
&= \int_G f(y)\Delta(y^{-1})\Delta(y)\, dm_1(y).\nonumber
\end{align}
Let $B=\{y:\, \Delta(y^{-1})\Delta(y)\ne 1\}$. 
Since (\ref{eq:Delta}) holds for all $f\in C_c(G)$, it follows that $m_1(B)=0$.
Then $m_2(B)=0$ as well, and applying (\ref{eq:Delta0})--(\ref{eq:Delta}), we get
\begin{align*}
\int_G f\, dm_2&=\int_G f(y)\Delta(y^{-1})\Delta(y)\, dm_2(y)\\
&= c\cdot \int_G f(y^{-1})\Delta(y)\, dm_1(y)\\
&= c\cdot \int_G f\, dm_1
\end{align*}
for every $f\in C_c(G)$. Because the measures $m_1$ and $m_2$ are locally finite,
one can show that they are uniquely determined by their values on $C_c(G)$.
Hence, $m_2= c\cdot m_1$.
\end{proof}

Our next task is to develop the theory of integration
using a collection of coordinate charts constructed in the previous section. Here 
we take the most elementary approach, but if the reader is familiar with
the theory of differential forms, most of this discussion might be redundant.

Let $\phi_1:\mathcal{O}_1\to G$ be a coordinate chart for $G$, where $\mathcal{O}_1$ is an open subset of
$\mathbb{R}^d$. We fix a measurable bounded function $\delta_1:\mathcal{O}_1\to\mathbb{R}^+$.
Given a function $f$ on $G$ with support contained in $\phi_1(\mathcal{O}_1)$. We define
$$
\int_G f\,  dm_{\delta_1}=\int_{\mathcal{O}_1} f(\phi_1(x))\delta_1(x)\, dx.
$$
This definition depends on the choices of the coordinate chart $\phi_1$ and the function $\delta_1$.
Let $\phi_2:\mathcal{O}_2\to G$ be another coordinate chart and $\delta_2:\mathcal{O}_2\to\mathbb{R}^+$.
Suppose that the support of $f$ is also contained in $\phi_2(\mathcal{O}_2)$.
Then using the change of variables formula for the Lebesgue integral, we obtain
\begin{align*}
\int_G f \, dm_{\delta_2}&=\int_{\mathcal{O}_2} f(\phi_2(y))\delta_2(y)\, dy\\
&=\int_{\mathcal{O}_1} f(\phi_1(x))\delta_2(\phi_2^{-1}\phi_1(x))\hbox{Jac}(\phi_2^{-1}\phi_1)_x\, dx,
\end{align*}
where $\hbox{Jac}(\phi_2^{-1}\phi_1)_x$ denotes the Jacobian of the map $\phi_2^{-1}\phi_1$.
Hence,
\begin{equation}\label{eq:comp}
m_{\delta_1}=m_{\delta_2}\quad \Longleftrightarrow\quad \delta_2(\phi_2^{-1}\phi_1(x))\hbox{Jac}(\phi_2^{-1}\phi_1)_x=\delta_1(x).
\end{equation}

\begin{definition}
{\rm
A \emph{volume density} $\delta$ is a collection of bounded measurable functions
$\delta_\phi:\mathcal{O}\to\mathbb{R}^+$ assigned to each coordinate chart $\phi:\mathcal{O}\to\mathbb{R}^+$
that satisfy the compatibility condition (\ref{eq:comp}).
}
\end{definition}

Now given a volume density $\delta$ on a Lie group $G$, we define a measure $m_\delta$ on $G$.
For every $f\in C_c(G)$, we write $f=f_1+\cdots +f_\ell$ for $f_i\in C_c(G)$ such that 
the support of $f_i$ is contained in $\phi_i(\mathcal{O}_i)$ for some coordinate charts
$\phi_i:\mathcal{O}_i\to G$. We define
$$
\int_G f\, dm_\delta= \sum_{i=1}^\ell \int_{\mathcal{O}_i} f(\phi_i(x))\delta_{\phi_i}(x)\, dx.
$$
One can check using the compatibility condition (\ref{eq:comp}) that this definition
is independent of the choices of the decomposition of $f$ and the coordinate charts $\phi_i$,
so that the measure $m_\delta$ is well-defined.

We investigate when the measure $m_\delta$ is left-invariant.
Given a function $f\in C_c(G)$ such that the support of $f$ is contained in $\phi(\mathcal{O})$
for a coordinate chart $\phi:\mathcal{O}\to G$, we have
$$
\int_G f\, dm_\delta=\int_{\mathcal{O}} f(\phi(x))\delta_\phi(x)\, dx
$$
To compute the integral of the function $x\mapsto f(g_0 x)$ with $g_0\in G$,
we observe that its support is contained
in $g_0^{-1}\phi(\mathcal{O})$, so that
\begin{align*}
\int_G f(g_0x)\, dm_\delta(x) &= \int_G f(g_0\, g_0^{-1}\phi(x))\delta_{g_0^{-1}\phi}(x)\, dx\\
&=\int_{\mathcal{O}} f(\phi(x)) \delta_{g_0^{-1}\phi}(x)\, dx.
\end{align*}
This computation shows that the measure $m_\delta$ is left-invariant if and only if
\begin{equation}\label{eq:inv}
\delta_\phi=\delta_{g_0\phi}\quad
\hbox{for all $g_0\in G$ and all coordinate charts $\phi$.}
\end{equation}

Using this construction, we prove

\begin{theorem}
Every Lie group supports an analytic left-invariant measure.
\end{theorem}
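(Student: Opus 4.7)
The plan is to specify the volume density $\delta$ in the canonical charts $\phi_g(x) = g\exp(x)$ by a single analytic function $\rho : \mathcal{O} \to \mathbb{R}^+$, setting $\delta_{\phi_g}(x) := \rho(x)$ for every $g \in G$. The left-invariance condition (\ref{eq:inv}) is then automatic, since $g_0 \phi_g = \phi_{g_0 g}$ makes both $\delta_{g_0 \phi_g}$ and $\delta_{\phi_g}$ equal to $\rho$. So everything reduces to choosing $\rho$ so that the compatibility condition (\ref{eq:comp}) holds for every pair of overlapping canonical charts.

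Introduce the auxiliary map $T_a(x) := \exp^{-1}(a\exp(x))$ for $a \in G$ near identity and $x \in \mathcal{O}$ near zero. Since $\phi_h^{-1}\phi_g = T_{h^{-1}g}$, the compatibility condition is precisely the functional equation
\[
\rho(T_a(x)) \cdot \mathrm{Jac}(T_a)_x = \rho(x)
\]
for all admissible $a,x$. Specialising to $x=0$ forces (after normalising $\rho(0)=1$) the choice
\[
\rho(z) \;:=\; \frac{1}{J(z)}, \qquad J(z) := \mathrm{Jac}\bigl(T_{\exp(z)}\bigr)_{0}.
\]

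To verify that this $\rho$ actually solves the functional equation at every $x$, I would exploit the cocycle identity $T_{ab} = T_a \circ T_b$, which follows immediately from the definition. Applied to $a\cdot\exp(x)$ and differentiated at $y=0$, the chain rule yields $J(T_a(x)) = \mathrm{Jac}(T_a)_x \cdot J(x)$, and rearranging gives exactly $\rho(T_a(x))\,\mathrm{Jac}(T_a)_x = \rho(x)$. Analyticity and positivity of $\rho$ on a sufficiently small $\mathcal{O}$ then follow from analyticity of $(x,y) \mapsto T_{\exp(x)}(y) = \exp^{-1}(\exp(x)\exp(y))$ near $(0,0)$ (a composition of $\exp$, group multiplication, and $\exp^{-1}$, all analytic at the relevant points) together with $J(0) = 1$, since $T_I = \mathrm{id}$.

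The main conceptual obstacle is recognising that the naive guess $\delta_{\phi_g} \equiv 1$ fails: the transition Jacobians $\mathrm{Jac}(T_a)_x$ are not identically one in noncommutative groups (they are governed by the derivative of the Baker--Campbell--Hausdorff series), so a nontrivial density is necessary. Once one sees that left-invariance forces a single common $\rho$ on all canonical charts, the cocycle identity uniquely pins down $\rho = 1/J$, and the remaining verifications are mechanical. The resulting measure $m_\delta$ is left-invariant by construction and analytic because $\rho$ is analytic.
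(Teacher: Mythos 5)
Your proposal is correct and is essentially the paper's own construction: taking the reference chart $\phi_0=\exp$ (so $z_0=0$), the paper's density $\delta_{\phi_g}(x)=\mathrm{Jac}\bigl(F_{\phi_g}(x,\cdot)\bigr)^{-1}_{0}$ is exactly your $\rho(x)=\mathrm{Jac}\bigl(T_{\exp(x)}\bigr)^{-1}_{0}$, and your cocycle identity $T_{ab}=T_a\circ T_b$ differentiated at $0$ is the same chain-rule computation the paper uses to verify (\ref{eq:comp}), while your observation that $g_0\phi_g=\phi_{g_0g}$ gives a chart-independent density is the paper's verification of (\ref{eq:inv}). The only difference is presentational (you derive the necessity of $\rho=1/J$ before verifying sufficiency), so no gap to report.
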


\begin{proof}
In view of the above discussion, it is sufficient to show that there exists
an analytic volume density satisfying (\ref{eq:comp}) and (\ref{eq:inv}). 
We fix a coordinate chart $\phi_0:\mathcal{O}_0\to G$ such that $\phi_0(z_0)=I$ for some $z_0\in
\mathcal{O}_0$. For every other coordinate chart $\phi:\mathcal{O}\to G$, we define
\begin{align*}
F_\phi(x,z)&=\phi^{-1}(\phi(x)\phi_0(z)),\\
\delta_\phi(x)&=\hbox{Jac}\left(F_\phi(x,\cdot)\right)^{-1}_{z_0}.
\end{align*}
Given any other coordinate chart $\psi$, we have 
$$
F_\psi(\psi^{-1}\phi(x),z)=\psi^{-1}(\phi(x)\phi_0(z))= \psi^{-1}\phi(F_\phi(x,z)),
$$
and
$$
\hbox{Jac}\left(F_\psi(\psi^{-1}\phi(x),\cdot) \right)_{z_0}=\hbox{Jac}(\psi^{-1}\phi)_x
\hbox{Jac}(F_\phi(x,\cdot)).
$$
This implies that (\ref{eq:comp}) holds.

To check (\ref{eq:inv}), we compute
\begin{align*}
F_{g_0\phi}(x,z)=(g_0\phi)^{-1}(g_0\phi(x)\phi_0(z))=\phi^{-1}(\phi(x)\phi_0(z))
=F_\phi(x,z),
\end{align*}
so that
$$
\hbox{Jac}(F_{g_0\phi}(x,\cdot))_{z_0}=\hbox{Jac}(F_{\phi}(x,\cdot))_{z_0},
$$
and (\ref{eq:inv}) holds. This completes the proof of the theorem.
\end{proof}

The above construction of the invariant measure is quite explicit.
For example, for the group
$$
\hbox{SL}_2(\mathbb{R})=\left\{\left(
\begin{tabular}{ll}
$a$ & $b$ \\
$c$ & $d$
\end{tabular}
\right): ad-bc=1\right\}
$$
the left-invariant measure is given by $\frac{dadbdc}{a}$. This measure is also right-invariant.
In general, a left-invariant measure does not have to be right-invariant.

\begin{definition}
{\rm 
A Lie group is called \emph{unimodular} if the left-invariant measure on $G$ is also right-invariant.
}
\end{definition}

For future reference we also prove 

\begin{proposition}\label{p:dec}
Let $G$ be a unimodular Lie group and $G=ST$ where $S$ and $T$ are closed subgroups such that $S\cap
T=1$. Then the invariant measure on $G$ is given by
$$
\int_G f\, dm=\int_{S\times T} f(st^{-1})\, dm_S(s) dm_T(t),\quad f\in C_c(G),
$$
where $m_S$ and $m_T$ are the left-invariant measures on $S$ and $T$ respectively.
\end{proposition}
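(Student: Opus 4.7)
The plan is to recognize that the hypotheses let $S\times T$ act freely and transitively on $G$ via $(s_0,t_0)\cdot g := s_0 g t_0^{-1}$, and to then invoke uniqueness of the Haar measure (Theorem~\ref{th:unique}) on the Lie group $S\times T$. The main piece of preparation is to check that $\Phi:S\times T\to G$, $(s,t)\mapsto st^{-1}$, is a diffeomorphism. Bijectivity is immediate: $G=ST$ gives surjectivity, and $s_1t_1^{-1}=s_2t_2^{-1}$ forces $s_2^{-1}s_1=t_2^{-1}t_1\in S\cap T=\{1\}$. The differential of $\Phi$ at $(1,1)$ is $(X,Y)\mapsto X-Y:\mathcal{L}(S)\oplus\mathcal{L}(T)\to\mathcal{L}(G)$, which is injective because $\mathcal{L}(S)\cap\mathcal{L}(T)=0$ and surjective because $ST$ contains a neighbourhood of $1$ and therefore $\mathcal{L}(S)+\mathcal{L}(T)=\mathcal{L}(G)$. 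By the inverse function theorem, $\Phi$ is a local diffeomorphism at identity, and translating shows it is one everywhere, hence a global diffeomorphism.

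Now the direct computation
$$\Phi(s_0 s,\,t_0 t)=s_0 s(t_0 t)^{-1}=s_0(st^{-1})t_0^{-1}=(s_0,t_0)\cdot\Phi(s,t)$$
shows that $\Phi$ intertwines left translation on $S\times T$ with the above $(S\times T)$-action on $G$. Since $m_S\otimes m_T$ is a left-Haar measure on the Lie group $S\times T$, its pushforward $\mu:=\Phi_*(m_S\otimes m_T)$ --- which by construction equals the right-hand side of the claimed formula --- is a locally finite Borel measure on $G$ invariant under the $(S\times T)$-action. By unimodularity, the Haar measure $m$ on $G$ is both left- and right-invariant; in particular it is left-$S$- and right-$T$-invariant, and hence also invariant under the same action. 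Pulling $m$ back through $\Phi$ therefore yields a left-invariant locally finite Borel measure on $S\times T$, which by Theorem~\ref{th:unique} must be a scalar multiple of $m_S\otimes m_T$; equivalently, $m=c\cdot\mu$ for some $c>0$, and since Haar measures are only determined up to positive scalar, the constant can be absorbed into the normalisations to recover the stated equality.

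The main obstacle is the diffeomorphism property of $\Phi$ --- this is where the combined hypotheses $G=ST$ and $S\cap T=\{1\}$ do their work, through the splitting $\mathcal{L}(G)=\mathcal{L}(S)\oplus\mathcal{L}(T)$. Unimodularity enters at exactly one point, to upgrade right-$G$-invariance of $m$ to right-$T$-invariance; without it the pullback measure would only be left-$S$-invariant times a modular-function twist, and the uniqueness argument would need to account for that twist.
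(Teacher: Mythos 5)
Your proposal is correct and follows essentially the same route as the paper: transport $m$ to $S\times T$ via $\Phi^{-1}$, observe that left translation by $(s_0,t_0)$ on $S\times T$ corresponds to $g\mapsto s_0gt_0^{-1}$ on $G$ (which preserves $m$ precisely because $G$ is unimodular), and conclude by the uniqueness theorem that the transported measure is proportional to $m_S\times m_T$. The only difference is that you spell out why $\Phi$ is a diffeomorphism (the paper simply asserts it is a homeomorphism); that added detail is welcome, though your step from $G=ST$ to $\mathcal{L}(S)+\mathcal{L}(T)=\mathcal{L}(G)$ deserves a word of justification (e.g.\ a constant-rank plus Baire category argument).
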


\begin{proof}
The map $\Phi(s,t)=st^{-1}$, $(s,t)\in S\times T$, defines a homeomorphism between $S\times T$ and $G$.
We consider the measure on $S\times T$ defined by
$$
f\mapsto \int_G f(\Phi^{-1}(g))\, dm(g),
$$
where $m$ is the invariant measure on $G$. For $(s_0,t_0)\in S\times T$,
\begin{align*}
\int_G f((s_0,t_0)\cdot \Phi^{-1}(g))\, dm(g)&=\int_G f(\Phi^{-1}(s_0gt_0^{-1}))\, dm(g)\\
&= \int_G f(\Phi^{-1}(g))\, dm(g).
\end{align*}
Hence, it follows from the uniqueness of invariant measure (Theorem \ref{th:unique}) that
this measure is proportional to the product measure $m_S\times m_T$. This implies the proposition.
\end{proof}

\section{Finite-dimensional representations}

A \emph{representation} of a Lie group $G$ is a continuous homomorphism 
$$
\rho:G\to \hbox{GL}_d(\mathbb{C}).
$$
The aim of this section is to explore such representations and, more specifically, find a basis
of $\mathbb{C}^d$ such these representations have the most simple form.
As we shall see, the situation is very different for two classes of groups ---
the solvable groups and the semisimple groups.

We start our discussion with the case of a solvable group.
For a Lie algebra $\mathfrak{g}$, we define inductively
$$
\mathfrak{g}^{(1)}=\left<[x,y]: x,y\in\mathfrak{g}\right>,\;\;\;
\mathfrak{g}^{(2)}=\left<[x,y]: x,y\in\mathfrak{g}^{(1)}\right>, \;\;\; \ldots
$$ 

\begin{definition}
{\rm 
A connected Lie group $G$ is called \emph{solvable} if $\mathcal{L}(G)^{(n)}=0$ for some $n$.
}
\end{definition}

A basic example of a solvable Lie group is any closed subgroup of the group of upper triangular matrices.
The following theorem shows that this example is typical.

\begin{theorem}[Lie-Kolchin]\label{th:lie}
Let $\rho:G\to\hbox{\rm GL}_d(\mathbb{C})$ be a representation of a connected solvable Lie group $G$.
Then there exists $g\in \hbox{\rm GL}_d(\mathbb{C})$ such that $g\rho(G)g^{-1}$ is 
contained in the group of upper triangular matrices.
\end{theorem}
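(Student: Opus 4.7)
The plan is to induct on $d = \dim V$ with $V = \mathbb{C}^d$; the base case $d = 1$ is trivial. For the inductive step, it suffices to produce a single common eigenvector $v \in V$ for all of $\rho(G)$, because then the invariant line $\mathbb{C} v$ gives an induced representation of $G$ on the $(d-1)$-dimensional quotient $V/\mathbb{C} v$, to which the inductive hypothesis applies; lifting a triangularizing basis of the quotient back to a complementary subspace together with $v$ yields the desired upper triangular form for $\rho(G)$ on $V$.

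To find the common eigenvector, I run a secondary induction on the derived length $n$ of $\mathcal{L}(G)$. When $n = 1$ the group $G$ is abelian, so $\rho(G)$ consists of pairwise commuting operators on $V$, and the usual linear-algebra argument (take an eigenspace of one element, observe it is preserved by the others, recurse on dimension) produces a common eigenvector. For general $n$, let $H$ be the connected Lie subgroup of $G$ with Lie algebra $[\mathcal{L}(G), \mathcal{L}(G)]$; it is connected and solvable of derived length at most $n-1$, and for connected $G$ it coincides with the commutator subgroup $[G, G]$. By the inductive hypothesis applied to $\rho|_H$, there is a common eigenvector for $\rho(H)$, so the joint eigenspace $V_\chi = \{v \in V : \rho(h)v = \chi(h)v \text{ for all } h \in H\}$ is nonzero for some continuous character $\chi : H \to \mathbb{C}^*$.

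The heart of the argument is to upgrade $V_\chi$ to a $\rho(G)$-invariant subspace on which $\rho(H)$ acts trivially. Invariance: a direct calculation gives $\rho(g) V_\chi = V_{\chi^g}$ with $\chi^g(h) := \chi(g^{-1}hg)$, so $G$ permutes the finite set $\{\chi' : V_{\chi'} \neq 0\}$ (finite because eigenspaces for distinct $\chi'$ are linearly independent in $V$). For each fixed $h \in H$ the map $g \mapsto \chi(g^{-1}hg)$ is continuous into the finite set $\{\chi'(h) : V_{\chi'} \neq 0\}$, so connectedness of $G$ forces it to be constantly equal to $\chi(h)$; hence $\chi^g = \chi$ and $V_\chi$ is $G$-stable. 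Triviality of $\chi$ on $H$: any $h \in H$ acts on $V_\chi$ as the scalar $\chi(h)$, so for $g_1, g_2 \in G$ the element $[g_1, g_2] \in H$ acts on $V_\chi$ both as $\chi([g_1, g_2]) \cdot I$ and as a matrix commutator (which has determinant $1$), giving $\chi([g_1, g_2])^{\dim V_\chi} = 1$; another continuity-plus-connectedness argument applied to the continuous map $(g_1, g_2) \mapsto \chi([g_1, g_2])$ on the connected space $G \times G$ then forces $\chi([g_1, g_2]) = 1$, and since $H$ is generated by commutators, $\chi \equiv 1$ on $H$.

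Consequently $\rho(G)$ acts on $V_\chi$ through the abelian quotient $G/H$, i.e.\ by pairwise commuting operators, which admit a common eigenvector by the abelian base case already established; this vector is the desired common eigenvector for all of $\rho(G)$ on $V$, closing the induction. The main obstacle in this plan is the pair of continuity-plus-connectedness arguments that simultaneously force $V_\chi$ to be $G$-stable and $\chi$ to be trivial on $H$: they are the only places where the hypothesis of connectedness of $G$ and the use of $\mathbb{C}$ (to guarantee enough eigenvectors in the first place) are genuinely essential.
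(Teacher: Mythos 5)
Your argument is correct, but it is a genuinely different proof from the one in the notes. The notes linearise immediately: via Theorem \ref{th:hom} they pass to the solvable Lie algebra $\mathfrak{h}=D\rho(\mathcal{L}(G))$, prove Lie's theorem there by taking a codimension-one ideal $\mathfrak{h}_0\supset\mathfrak{h}^{(1)}$, a weight $\lambda$ of $\mathfrak{h}_0$, the vectors $v_i=X^iv$, and a trace computation showing $\lambda([Y,X])=0$; the group statement then follows from $\exp$ and Lemma \ref{l:gene}. You instead stay at the group level and run the classical Lie--Kolchin argument: induction on derived length, the weight space $V_\chi$ for the derived group $H=[G,G]$, finiteness of the weight set plus connectedness of $G$ to get $G$-invariance of $V_\chi$, the determinant/roots-of-unity argument plus connectedness to force $\chi$ to be trivial on commutators, and finally a common eigenvector for the resulting commuting family. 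Both routes are sound; the paper's buys economy within the machinery it has already built (Lie-algebra homomorphisms, everything stays inside closed matrix groups), while yours is the proof that transfers verbatim to the algebraic-group setting and makes the role of connectedness completely transparent (your two continuity-into-a-finite-set steps).

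One ingredient of your proof is imported from outside these notes and deserves flagging: the identification of the abstract commutator subgroup $[G,G]$ with the connected subgroup whose Lie algebra is $[\mathcal{L}(G),\mathcal{L}(G)]$. This is a standard theorem, but it is not proved here, and it is genuinely needed: solvability in the paper is defined through the Lie algebra, so without this fact you cannot assert that $H$ is connected, solvable, and of strictly smaller derived length, nor can you replace $H$ by the closure of $[G,G]$ (whose Lie algebra may be larger, breaking the induction). There is also a minor framework mismatch, since $[G,G]$ need not be closed and hence is not a ``Lie group'' in the paper's sense; this is harmless provided you phrase the secondary induction for connected (not necessarily closed) matrix groups, but it should be said explicitly. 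With that fact granted, every step you give — the normality of $H$, the computation $\rho(g)V_\chi=V_{\chi^g}$, the linear independence of weight vectors, and the $\det=1$ argument on $V_\chi$ — goes through.
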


We start the proof with

\begin{lemma}\label{l:gene}
Let $G$ be a connected Lie group.
Then for every nonempty open $U\subset G$, we have $G=\left< U\right>$.
\end{lemma}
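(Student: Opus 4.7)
The plan is to show that the subgroup $H := \langle U \rangle$ is both open and closed in $G$, and then invoke connectedness to conclude $H = G$. This is a standard topological-group argument that does not use anything particular to the Lie structure beyond the fact that $G$ is a topological group.

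First I would show $H$ is open. Fix any $u_0 \in U$; then $W := U u_0^{-1}$ is a nonempty open subset of $G$ containing the identity, and it is contained in $H$ since both $U \subset H$ and $u_0^{-1} \in H$. Thus $H$ contains an open neighborhood of the identity. For each $h \in H$, left translation by $h$ is a homeomorphism of $G$, so $hW$ is an open neighborhood of $h$, and $hW \subset H$ because $H$ is a subgroup. Writing $H = \bigcup_{h \in H} hW$ exhibits $H$ as a union of open sets, so $H$ is open.

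Next I would show $H$ is closed. The complement of $H$ in $G$ is a union of left cosets $gH$ with $g \notin H$, and each such coset is open (being the image of the open set $H$ under the homeomorphism of left translation by $g$). Hence $G \setminus H$ is open, so $H$ is closed. Finally, since $H$ is a nonempty (it contains the identity), open, and closed subset of $G$, and since $G$ is connected, we conclude $H = G$.

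There is no real obstacle here; the only thing to be careful about is not to confuse the topological subgroup generated by $U$ with some kind of algebraic closure. The argument shows directly that finite products of elements of $U \cup U^{-1}$ already exhaust $G$ once $G$ is connected.
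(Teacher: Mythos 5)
Your argument is correct and is essentially the same as the paper's: the paper also notes that $H=\langle U\rangle$ is an open subgroup and uses the decomposition of $G$ into disjoint open cosets of $H$ together with connectedness to conclude $G=H$. You merely spell out the openness of $H$ (via the neighbourhood $Uu_0^{-1}$ of the identity) and phrase the coset argument as ``open and closed,'' which the paper leaves implicit.
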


\begin{proof}
Let $H=\left<U\right>$. It is clear that $H$ is an open subgroup of $G$.
We have the coset decomposition $G=\sqcup_{g\in G/H} gH$ consisting of disjoint open sets.
Since $G$ is connected, we conclude that $G=H$.
\end{proof}

\begin{proof}[Proof of Theorem \ref{th:lie}]
By Theorem \ref{th:hom}, we have a Lie-algebra homomorphism $D\rho:
\mathcal{L}(G)\to\hbox{M}_d(\mathbb{C})$ such that 
\begin{equation}\label{eq:1}
\rho(\exp(X))=\exp(D\rho(X))\quad\hbox{for all $X\in  \mathcal{L}(G)$.}
\end{equation}
If we prove that $g D\rho(\mathcal{L}(G))g^{-1}$ is of upper triangular form for some $g\in
\hbox{GL}_d(\mathbb{C})$, then it follows from (\ref{eq:1}) that 
$g \rho(U)g^{-1}$ is also of upper triangular form for a neighbourhood $U$ of identity in $G$.
Then it follows from Lemma \ref{l:gene} that $g \rho(G)g^{-1}$ is of upper triangular form as well.
Hence, it remains to show that the Lie algebra $\mathfrak{h}=D\rho(\mathcal{L}(G))$
is upper triangular up to a conjugation. 

We claim that there exists a one-dimensional $\mathfrak{h}$-invariant subspace.
Once this claim is established, the theorem follows by induction on dimension.
Since $\mathfrak{h}$ is solvable, $\mathfrak{h}^{(1)}\ne \mathfrak{h}$.
We take a codimension one subspace $\mathfrak{h}_0$ of $\mathfrak{h}$ that
contains $\mathfrak{h}^{(1)}$ and $X\in \mathfrak{h}$ such that
$\mathfrak{h}=\left<X,\mathfrak{h}_0\right>$.
We note that $[\mathfrak{h}_0,\mathfrak{h}]\subset \mathfrak{h}_0$.
By induction on $\dim(\mathfrak{h})$, there exists a nonzero vector $v$ such that
$\mathfrak{h}_0 v\subset \mathbb{C}v$. For $Y\in \mathfrak{h}_0$, we write
$Yv=\lambda(Y)v$ with $\lambda(Y)\in \mathbb{C}$. Let $v_i=X^iv$. Then
\begin{equation}\label{eq:ind}
Yv_i=YXv_{i-1}=XYv_{i-1}+[Y,X] v_{i-1},
\end{equation}
where $[Y,X]\in \mathfrak{h}_0$.  Using induction on $i$, we deduce from (\ref{eq:ind}) that
the subspaces $\left<v_0,\ldots,v_{i}\right>$ are $\mathfrak{h}_0$-invariant, and moreover,
$$
Yv_i-\lambda(Y)v_i\in \left<v_0,\ldots,v_{i-1}\right>.
$$
Let $V$ be the subspace generated by the vectors $v_i$. In the basis $\{v_i\}$,
the transformation $[Y,X]|_V$ is upper triangular with $\lambda([Y,X])$ on the diagonal.
Hence,
$$
\hbox{Tr}([Y,X]|_V)=\dim(V)\lambda([Y,X]).
$$
On the other hand,
$$
\hbox{Tr}([Y,X]|_V)=\hbox{Tr}(Y|_V X|_V-X|_V Y|_V)=0.
$$
Hence, $\lambda([Y,X])=0$ for every $Y\in \mathfrak{h}_0$.
Then using induction on $i$, we deduce from (\ref{eq:ind})
that $[\mathfrak{h}_0,X]$ acts trivially on $V$, and $Yv_i=\lambda(Y)v_i$.
This proves that every eigenvector of $X$ in $V$ is also an eigenvector of
$\mathfrak{h}=\left<X,\mathfrak{h}_0\right>$, which implies the claim and completes the proof of the theorem.
\end{proof}

\begin{definition}
{\rm
A connected Lie group is called \emph{semisimple} if it contains no nontrivial normal closed
connected solvable subgroups.
}
\end{definition}

An example of a semisimple group is the group $\hbox{SL}_d(\mathbb{R})$.
Representations of semisimple groups behave very differently from representations
of solvable groups.

\begin{theorem}\label{th:weyl}
Let $G$ be a connected semisimple Lie group and $\rho:G\to \hbox{\rm GL}_d(\mathbb{C})$
a representation of $G$. Then every $\rho(G)$-invariant subspace of $\mathbb{C}^d$
has a $\rho(G)$-invariant complement.
\end{theorem}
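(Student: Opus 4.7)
The plan is to adapt Weyl's \emph{unitarian trick}, reducing the problem to the compact case, where averaging with respect to the Haar measure constructed in \S 2 produces an invariant Hermitian inner product and hence orthogonal complements.

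First, I would pass to the Lie algebra level. By Theorem \ref{th:hom}, $\rho$ induces a Lie-algebra homomorphism $D\rho: \mathcal{L}(G) \to \mathrm{M}_d(\mathbb{C})$, which I extend $\mathbb{C}$-linearly to the complexification $\mathfrak{g}_\mathbb{C} = \mathcal{L}(G) \otimes_\mathbb{R} \mathbb{C}$. Using Proposition \ref{p:nbhd} together with Lemma \ref{l:gene}, a subspace $W \subset \mathbb{C}^d$ is $\rho(G)$-invariant if and only if it is $D\rho(\mathcal{L}(G))$-invariant, if and only if it is $D\rho(\mathfrak{g}_\mathbb{C})$-invariant. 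So the whole problem is about invariant subspaces for the complexified Lie algebra action.

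Next, I would invoke the existence of a \emph{compact real form}: there is a real Lie subalgebra $\mathfrak{u} \subset \mathfrak{g}_\mathbb{C}$ with $\mathfrak{g}_\mathbb{C} = \mathfrak{u} \oplus i\mathfrak{u}$, such that the connected simply connected Lie group $U$ with Lie algebra $\mathfrak{u}$ is \emph{compact}. Since $\mathfrak{u} \subset \mathfrak{g}_\mathbb{C}$ is a real Lie subalgebra, $D\rho$ restricts to a Lie-algebra homomorphism $\mathfrak{u} \to \mathrm{M}_d(\mathbb{C})$, which by Theorem \ref{th:simply} integrates (simple connectedness of $U$) to a continuous representation $\tilde\rho: U \to \mathrm{GL}_d(\mathbb{C})$ with $D\tilde\rho$ equal to this restriction. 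The same argument as in the first step then shows that a subspace $W \subset \mathbb{C}^d$ is $\tilde\rho(U)$-invariant if and only if it is $D\rho(\mathfrak{u})$-invariant, and, since $\mathfrak{g}_\mathbb{C} = \mathfrak{u} + i\mathfrak{u}$, if and only if it is $D\rho(\mathfrak{g}_\mathbb{C})$-invariant. So $\rho(G)$-invariant subspaces coincide with $\tilde\rho(U)$-invariant subspaces.

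Now the averaging argument closes the loop: since $U$ is compact, its left-invariant measure $m_U$ from \S 2 can be normalised to a probability measure and is automatically bi-invariant. Starting from any positive-definite Hermitian inner product $\langle\cdot,\cdot\rangle$ on $\mathbb{C}^d$, the averaged form
$$
\langle v, w \rangle_U = \int_U \langle \tilde\rho(u) v,\, \tilde\rho(u) w \rangle \, dm_U(u)
$$
is $\tilde\rho(U)$-invariant and positive-definite, so for any $\tilde\rho(U)$-invariant subspace $W$ the $\langle\cdot,\cdot\rangle_U$-orthogonal complement $W^\perp$ is a $\tilde\rho(U)$-invariant, hence $\rho(G)$-invariant, complement.

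The main obstacle is the second step: the existence of a compact real form of $\mathfrak{g}_\mathbb{C}$ is a nontrivial structural theorem requiring the Killing form and Cartan involutions, which have not been developed in this excerpt. In a self-contained write-up one would either need to cite this as external input from the structure theory of semisimple Lie algebras, or verify by hand that the hypothesis (no nontrivial closed connected normal solvable subgroup) forces the existence of such a $\mathfrak{u}$. Everything else --- the $\mathbb{C}$-linear extension, the passage between group- and algebra-invariance, the integration via Theorem \ref{th:simply}, and the final averaging --- is a direct application of results already proved.
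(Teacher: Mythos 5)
Your proposal is correct and follows essentially the same route as the paper: Weyl's unitary trick, passing between $\rho(G)$-invariance and invariance under the complexified Lie algebra, integrating the restriction to a compact real form via Theorem \ref{th:simply}, and averaging a Hermitian form over the compact group to produce an invariant complement. The paper likewise treats the existence of the compact form as external structure theory (verifying it explicitly only for $G=\hbox{SL}_2(\mathbb{R})$ with $H=\hbox{SU}(2)$), so your flagged gap matches the paper's own caveat.
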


In particular, we deduce

\begin{corollary}
In the setting of Theorem \ref{th:weyl}, $\mathbb{C}^d=V_1\oplus\cdots\oplus V_s$
where the subspaces $V_i$ are $\rho(G)$-invariant and irreducible (i.e, they don't
contain any proper $\rho(G)$-invariant subspaces).
\end{corollary}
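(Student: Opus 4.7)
The plan is to argue by induction on $d = \dim \mathbb{C}^d$, using Theorem \ref{th:weyl} as the engine that splits off irreducible pieces one at a time. The base cases $d=0$ and $d=1$ are immediate: in the former we take the empty direct sum, and in the latter $\mathbb{C}^d$ is automatically irreducible.

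For the inductive step, I would consider the collection of $\rho(G)$-invariant subspaces of $\mathbb{C}^d$ of positive dimension. If $\mathbb{C}^d$ itself is the only such subspace of positive dimension besides its subspaces forced by irreducibility, i.e.\ if $\mathbb{C}^d$ is irreducible, then we are done with $s=1$. Otherwise, choose $V_1 \subset \mathbb{C}^d$ to be a $\rho(G)$-invariant subspace of minimal positive dimension. By minimality $V_1$ contains no proper nonzero $\rho(G)$-invariant subspace, hence $V_1$ is irreducible. Now Theorem \ref{th:weyl} supplies a $\rho(G)$-invariant complement $W$, so that
\[
\mathbb{C}^d = V_1 \oplus W.
\]
The restriction $\rho|_W \colon G \to \mathrm{GL}(W)$ is again a continuous representation of the same connected semisimple Lie group $G$, and $\dim W = d - \dim V_1 < d$.

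I would then invoke the inductive hypothesis applied to $\rho|_W$: this uses Theorem \ref{th:weyl} for $\rho|_W$, which holds because Theorem \ref{th:weyl} is stated for arbitrary finite-dimensional representations of any connected semisimple Lie group. The induction yields a decomposition $W = V_2 \oplus \cdots \oplus V_s$ into $\rho(G)$-invariant irreducible subspaces, and combining with $V_1$ gives the required decomposition of $\mathbb{C}^d$.

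There is essentially no hard step here, since the heavy lifting is carried out by Theorem \ref{th:weyl}; the only point that requires a moment's care is the existence of an irreducible invariant subspace to peel off, which is handled by the minimal-dimension argument above, together with the observation that a subspace of $W$ that is $\rho(G)$-invariant is a $\rho(G)$-invariant subspace of $\mathbb{C}^d$, so that the inductive step is legitimate.
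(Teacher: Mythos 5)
Your proof is correct and is exactly the standard induction on dimension that the paper leaves implicit when it deduces the corollary directly from Theorem \ref{th:weyl}: peel off a minimal (hence irreducible) invariant subspace, take the invariant complement provided by the theorem, and induct. No gaps.
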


In the proof of the theorem, we use

\begin{lemma}\label{l:invar}
Let $\rho:G\to \hbox{\rm GL}_d(\mathbb{C})$
a representation of a connected Lie group $G$. Then a subspace $V\subset \mathbb{C}^d$
is $\rho(G)$-invariant  if and only if it is $D\rho(\mathcal{L}(G))$-invariant.
\end{lemma}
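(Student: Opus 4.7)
The plan is to use Theorem~\ref{th:hom} (which gives $\rho(\exp(X))=\exp(D\rho(X))$ for $X\in\mathcal{L}(G)$) to translate between invariance at the group level and invariance at the Lie-algebra level, and then invoke connectedness of $G$ via Lemma~\ref{l:gene} to pass from a neighbourhood of identity back to all of $G$.

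For the forward direction, assume $V$ is $\rho(G)$-invariant and fix $X\in\mathcal{L}(G)$ and $v\in V$. Since $\exp(tX)\in G$, we have $\exp(tD\rho(X))v=\rho(\exp(tX))v\in V$ for every $t\in\mathbb{R}$. The subspace $V$ is finite-dimensional, hence closed, so differentiating the curve $t\mapsto \exp(tD\rho(X))v$ at $t=0$ keeps us in $V$; but this derivative is exactly $D\rho(X)v$. Thus $D\rho(X)V\subset V$.

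For the reverse direction, assume $V$ is $D\rho(\mathcal{L}(G))$-invariant. Then for every $X\in\mathcal{L}(G)$ and $v\in V$, each term of the power series
$$\exp(D\rho(X))v=\sum_{n=0}^{\infty}\frac{D\rho(X)^n v}{n!}$$
lies in $V$, and since $V$ is closed the sum lies in $V$ as well. By Theorem~\ref{th:hom} this gives $\rho(\exp(X))v\in V$, so $\rho(\exp(X))V\subset V$ for every $X\in\mathcal{L}(G)$. By Proposition~\ref{p:nbhd}, the set $U:=\exp(\mathcal{O}\cap\mathcal{L}(G))$ is a neighbourhood of identity in $G$ on which we have just established $\rho$-invariance of $V$. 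Since $G$ is connected, Lemma~\ref{l:gene} gives $G=\langle U\rangle$, and as the collection of elements $g\in G$ with $\rho(g)V\subset V$ is a subgroup of $G$ containing $U$, it must equal all of $G$. Thus $V$ is $\rho(G)$-invariant.

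There is no real obstacle here: the only subtle point is ensuring that infinitesimal invariance gives group-level invariance on a full neighbourhood of identity (handled by the convergent exponential series plus closedness of $V$), and then that this suffices to cover all of $G$ (handled by connectedness via Lemma~\ref{l:gene}). Both ingredients are already available in the excerpt.
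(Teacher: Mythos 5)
Your proof is correct and follows essentially the same route as the paper: the forward direction by differentiating $t\mapsto\rho(\exp(tX))v=\exp(tD\rho(X))v$ at $t=0$, and the converse by using $\rho(\exp(X))=\exp(D\rho(X))$ to get invariance under $\exp(\mathcal{L}(G))$ and then generation of $G$ via Lemma~\ref{l:gene}. You merely spell out two steps the paper leaves implicit (the convergent exponential series with $V$ closed, and Proposition~\ref{p:nbhd} supplying the open neighbourhood needed for Lemma~\ref{l:gene}), which is fine.
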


\begin{proof}
We recall that the following relation holds (see Theorem \ref{th:hom}):
\begin{equation}\label{eq:2}
\rho(\exp(X))=\exp(D\rho(X))\quad\hbox{for all $X\in  \mathcal{L}(G)$.}
\end{equation}

If the subspace $V$ is $G$-invariant, then for every $X\in \mathcal{L}(G)$, $t\in \mathbb{R}$,  and $v\in V$, we have 
$$
\exp(t D\rho(X))v=\rho(\exp(tX))v\in V,
$$
and taking derivative at $t=0$, we obtain that $D\rho(X)v\in V$,
so that $V$ is $D\rho(\mathcal{L}(G))$-invariant.

Conversely, if $V$ is $D\rho(\mathcal{L}(G))$-invariant,
then it follows from (\ref{eq:2}) that it is $\exp(\mathcal{L}(G))$-invariant.
Since by Lemma \ref{l:gene}, $\exp(\mathcal{L}(G))$ generates $G$, this proves the claim. 
\end{proof}

\begin{proof}[Proof of Theorem \ref{th:weyl}]
We give a proof using the so-called ``Weyl's unitary trick''. Surprisingly, the invariant measure
introduced in the previous section turns out to be very useful to prove this algebraic fact.

We first assume that $G$ is compact. Let $\left<\cdot,\cdot\right>$ be a positive-definite
Hermitian form on $\mathbb{C}^d$. We define a new Hermitian form on $\mathbb{C}^d$ by
$$
\left<v_1,v_2\right>_G=\int_G \left<\rho(g^{-1})v_1,\rho(g^{-1})v_2\right>\, dm(g),\quad v_1,v_2\in\mathbb{C}^d,
$$
where $m$ is the left-invariant measure on $G$ constructed in Section 2.
Since $G$ is compact, the measure $m$ is finite,
and the Hermitian form is well-defined. It is also easy see that it is positive-definite.
For $h\in G$ and $v_1,v_2\in\mathbb{C}^d$,
\begin{align*}
\left<\rho(h)v_1,\rho(h)v_2\right>_G&=\int_G \left<\rho(g^{-1}h)v_1,\rho(g^{-1}h)v_2\right>\, dm(h)\\
&=\int_G \left<\rho(g^{-1})v_1,\rho(g^{-1})v_2\right>\, dm(h)=\left<v_1,v_2\right>_G.
\end{align*}
Hence, this form is $\rho(G)$-invariant. 
Given a $\rho(G)$-invariant subspace $V$, we have a decomposition $\mathbb{C}^d=V\oplus V^\perp$,
where $V^\perp=\{v: \left<v,V\right>_G=0\}$. For $v\in V^\perp$,
$$
\left<\rho(g)v,V\right>=\left<v,\rho(g)^{-1}V\right>=0.
$$
This shows that $V^\perp$ is $\rho(G)$-invariant, and proves the theorem in this case.

Now we explain how to give a proof in general. In fact, we restrict our attention to
$G=\hbox{SL}_2(\mathbb{R})$. The same argument works for general groups, but this requires 
some knowledge of the structure theory of semisimple groups, which we don't discuss here.
Let $\mathfrak{g}=\mathcal{L}(G)$ and $D\rho:\mathfrak{g}\to \hbox{M}_d(\mathbb{C})$ 
the corresponding Lie-algebra homomorphism.
We denote by $D\rho_{\mathbb{C}}:\mathfrak{g}\otimes \mathbb{C}\to \hbox{M}_d(\mathbb{C})$
the linear extension of $D\rho$ which is also a Lie-algebra homomorphism.
We consider the subgroup
\begin{align*}
H=\hbox{SU}(2)&=\{g\in \hbox{GL}_2(\mathbb{C}):\, {}^t\bar g g=I,\, \det(g)=1\}\\
&= \left\{ 
\left(
\begin{tabular}{rr}
$a$ & $b$\\
$-\bar b$ & $\bar a$
\end{tabular}
\right):
 \,\, a,b\in \mathbb{C},\, |a|^2+|b|^2=1 \right\}.
\end{align*}
Its Lie algebra is 
\begin{align*}
\mathfrak{h}=\mathcal{L}(H)&=\{X\in \hbox{M}_2(\mathbb{C}):\, {}^t\bar X+ X=0,\, \hbox{Tr}(X)=0\}\\
&= \left\{ 
\left(
\begin{tabular}{rr}
$i u$ & $v$\\
$-\bar v$ & $- i u$
\end{tabular}
\right):
 \,\, u\in \mathbb{R},\, v\in \mathbb{C} \right\}.
\end{align*}
It is easy to check that 
\begin{equation}\label{eq:eq}
\mathfrak{h}\otimes\mathbb{C}=\{X\in \hbox{M}_2(\mathbb{C}):\,
\hbox{Tr}(X)=0\}=\mathfrak{g}\otimes\mathbb{C}.
\end{equation}
Since $H$ is simply connected ($H$ is homeomorphic to the 3-dimensional sphere),
it follows from Theorem \ref{th:simply} that there exists a representation
$\tilde \rho: H\to \hbox{GL}_d(\mathbb{C})$ such that $D\tilde \rho=D\rho_{\mathbb{C}}|_{\mathfrak{h}}$.
Now, if $V$ is a $\rho(G)$-invariant subspace, then by Lemma \ref{l:invar}, it is also 
invariant under $D\rho_{\mathbb{C}}(\mathfrak{g}\otimes
\mathbb{C})=D\rho_{\mathbb{C}}(\mathfrak{h}\otimes \mathbb{C})$, and $\tilde\rho(H)$-invariant.
Since $H$ is compact, we know that $V$ has an $\tilde\rho(H)$-invariant complement $V'$.
Then by Lemma \ref{l:invar}, $V'$ is $D\rho_{\mathbb{C}}(\mathfrak{h})$-invariant.
Hence, it follows from (\ref{eq:eq}) that $V'$ is $D\rho(\mathfrak{g})$-invariant.
Finally, applying Lemma \ref{l:invar} again, we conclude that $V'$ is $\rho(G)$-invariant,
which finishes the proof.

We note that the main ingredient of the proof is existence of a compact subgroup $H$ such that 
(\ref{eq:eq}) holds. Such subgroup is called a compact form of $G$, and it is known that every connected
semisimple Lie group has a compact form. 
\end{proof}

\section{Algebraic groups}

In this section we introduce algebraic groups and discuss their basis properties.

\begin{definition}
{\rm 
A subgroup $G$ of $\hbox{GL}_d(\mathbb{C})$ is called \emph{algebraic}
if it is the zero set of a family of polynomial functions, namely,
$$
G=\{g:\; P(g)=0\;\;\hbox{for all $P\in I$}\}
$$
for some subset $I\subset \mathbb{C}[x_{11},\ldots,x_{dd}]$.
}
\end{definition}

For example, the special linear group $\hbox{SL}_d(\mathbb{C})$ and 
the orthogonal group $\hbox{O}_d(\mathbb{C})$ are algebraic group.
It is clear that every algebraic group can be considered as a Lie group
and results of the previous sections apply. 
The advantage of working with algebraic groups is that they exhibit
much more rigid behaviour than Lie groups. As an example, we mention
the following theorem which will be proved later.

\begin{theorem}\label{th:alg_hom}
Let $f:G_1\to G_2$ be a polynomial homomorphism of algebraic groups $G_1$ and $G_2$.
Then $f(G_1)$ is an algebraic group, in particular, $f(G_1)$ is closed.
\end{theorem}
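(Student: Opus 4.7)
The plan is to combine Chevalley's theorem from algebraic geometry with the group structure of $f(G_1)$. Chevalley's theorem asserts that the image of an algebraic variety under a polynomial map is \emph{constructible}, i.e.\ a finite union of locally closed Zariski subsets. Applied to $f$, this gives that $f(G_1)$ is constructible in $G_2$. Combined with the fact that the irreducible components of an algebraic group are the (disjoint) cosets of its identity component, a standard argument on constructible sets shows that $f(G_1)$ contains a Zariski-open subset $V$ which is dense in its Zariski closure $H := \overline{f(G_1)}$.

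Next I would check that $H$ is itself an algebraic subgroup of $G_2$. Multiplication and inversion on $G_2$ are polynomial, hence Zariski-continuous. Then the relations $f(G_1)\cdot f(G_1)\subset f(G_1)$ and $f(G_1)^{-1}\subset f(G_1)$ pass to the closure: for any fixed $h\in f(G_1)$, $hH\subset H$ (as the continuous image under left translation of $\overline{f(G_1)}$ is contained in $\overline{f(G_1)}$), and then reversing roles gives $H\cdot H\subset H$ and $H^{-1}\subset H$. Thus $H$ is a Zariski-closed subgroup, hence algebraic.

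To conclude $f(G_1)=H$, I would argue as follows. Fix any $h\in H$. Since inversion and left translation by $h$ are Zariski homeomorphisms of $H$, the set $hV^{-1}$ is also Zariski-open and dense in $H$. Density of $V$ in $H$ forces $V\cap hV^{-1}\neq\emptyset$, so we can write $v = hv'^{\,-1}$ with $v,v'\in V$. Then $h = v v'$, and since $V\subset f(G_1)$ and $f(G_1)$ is a subgroup, $h\in f(G_1)$. Hence $f(G_1)=H$, which is an algebraic group; in particular $f(G_1)$ is closed.

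The main obstacle is Chevalley's theorem on constructible sets, which is not established in the excerpt and would have to be cited from standard algebraic geometry (it can be proved by Noetherian induction on the fibres of a dominant morphism, or equivalently via quantifier elimination for algebraically closed fields). Granting this, the rest of the proof is a short interplay between the Zariski topology and the group axioms: closure of a subgroup is a subgroup, and a subgroup containing a dense Zariski-open subset of an algebraic group must exhaust it.
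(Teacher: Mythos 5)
Your proposal is correct and follows essentially the same route as the paper: the key input (that $f(G_1)$ contains a Zariski-open subset of its Zariski closure) is exactly the paper's Proposition~\ref{p:open}, which the paper proves via the commutative-algebra correspondence and a result of Atiyah--Macdonald, while you invoke Chevalley's constructibility theorem for the same purpose. The only cosmetic difference is the endgame: the paper notes that $f(G_1)$ is then an open subgroup of the closed subgroup $\overline{f(G_1)}$ and concludes via the coset decomposition, whereas you use the equivalent dense-open translate argument $V\cap hV^{-1}\neq\emptyset$; both are valid.
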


An analogue of this statement fails in the category of Lie groups.
There are continuous homomorphisms $f:G_1\to G_2$ between Lie groups such that
$f(G_1)$ is not closed. For instance, consider the homomorphism
$$
\mathbb{R}\to \hbox{GL}_2(\mathbb{C}):t\mapsto
\left(
\begin{tabular}{ll}
$e^{2\pi\omega_1 t}$ & 0 \\
$0$ & $e^{2\pi\omega_2 t}$
\end{tabular}
\right),
$$
where $\omega_1,\omega_2\in\mathbb{R}$ are rationally independent.
The image of this map is not closed.

Other examples of rigid behaviour of algebraic groups are absence of nontrivial recurrence points
(Corollary \ref{c:rec} below) and robustness of unipotent and semisimple transformations under
polynomial homomorphisms (Theorem \ref{th:us} below).

For $I\subset \mathbb{C}[x_{1},\ldots,x_{d}]$, we define
$$
\mathcal{V}(I)=\{x\in \mathbb{C}^d:\; P(x)=0\;\;\hbox{for all $P\in I$}\}.
$$
A subset of $\mathbb{C}^d$ is called \emph{algebraic} if it is of the form $\mathcal{V}(I)$ for some $I$.
We list some of the basic properties of the operation $\mathcal{V}$, which are not hard to check:
 
\begin{enumerate}
\item[(i)] $\mathcal{V}(\{1\})=\emptyset$, $\mathcal{V}(\{0\})=\mathbb{C}^d$,
\item[(ii)] $\cap_\alpha \mathcal{V}(I_\alpha)=\mathcal{V}(\cup_\alpha I_\alpha)$,
\item[(iii)] $\mathcal{V}(I_1)\cup \mathcal{V}(I_2)=\mathcal{V}(I_1\cdot I_2)$,
\item[(iv)] If $f:\mathbb{C}^{d_1}\to \mathbb{C}^{d_2}$ is a polynomial map, then
$$
f^{-1}(\mathcal{V}(I))=\mathcal{V}(\{P\circ f:\; P\in I\}).
$$
\end{enumerate}
Properties (i)--(iii) imply that the collection $\{\mathcal{V}(I):\; I\subset \mathbb{C}[x_{1},\ldots,x_{d}]\}$
satisfies the axioms of closed sets and defines a topology on $\mathbb{C}^d$ which is called
the \emph{Zariski topology}. It follows from (iv) that polynomial maps are continuous with
respect to this topology. Although the Zariski topology provides a convenient framework for
studying polynomial maps, the reader should be warned that this topology exhibits many 
counter-intuitive properties. In particular, it is not Hausdorff, and has some compactness
properties (see Proposition 4.4 below).

The usual notion of connectedness is not very useful in this setting
and a natural substitute is the notion of irreducibility:

\begin{definition}
{\rm 
A (Zariski) closed subset $X$ is called \emph{irreducible} if 
$X\ne X_1\cup X_2$ for any closed $X_1, X_2\subsetneq X$.
}
\end{definition}
 
We show that

\begin{proposition}\label{th:irr}
Every closed set $X$ can be decomposed as $X=X_1\cup\cdots \cup X_l$ where $X_i$'s are irreducible closed
sets.
\end{proposition}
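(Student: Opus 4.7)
The plan is to argue by contradiction using the Noetherian property of the polynomial ring $\mathbb{C}[x_1,\ldots,x_d]$, which is the key algebraic input (Hilbert basis theorem). Specifically, I will translate a hypothetical failure of the decomposition into an infinite strictly descending chain of closed sets, which in turn gives an infinite strictly ascending chain of ideals, contradicting the Noetherian property.

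First, for any subset $Y \subset \mathbb{C}^d$, let $I(Y) = \{P \in \mathbb{C}[x_1,\ldots,x_d] : P(y) = 0 \text{ for all } y \in Y\}$. I would record the elementary observation that if $Y_1 \subsetneq Y_2$ are Zariski closed, then $I(Y_2) \subsetneq I(Y_1)$: clearly $I(Y_2) \subseteq I(Y_1)$, and if we had equality then $Y_1 = \mathcal{V}(I(Y_1)) = \mathcal{V}(I(Y_2)) = Y_2$, using that closed sets are recovered as $\mathcal{V}(I(\cdot))$. This step is routine but crucial, as it lets me pass between chains of closed sets and chains of ideals.

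Next, let $\mathcal{S}$ be the family of closed subsets of $\mathbb{C}^d$ that do \emph{not} admit a finite decomposition into irreducibles, and suppose for contradiction that $\mathcal{S}$ is nonempty, with some $X \in \mathcal{S}$. Since $X \in \mathcal{S}$, in particular $X$ is not irreducible, so $X = X' \cup X''$ for proper closed subsets $X', X'' \subsetneq X$. If both $X'$ and $X''$ were finite unions of irreducibles, then so would be $X$; hence at least one of them, say $X_1$, lies in $\mathcal{S}$ and satisfies $X_1 \subsetneq X$. Iterating this construction produces a strictly descending sequence $X \supsetneq X_1 \supsetneq X_2 \supsetneq \cdots$ of Zariski closed sets, all belonging to $\mathcal{S}$.

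Applying the observation above gives a strictly ascending chain of ideals
\begin{equation*}
I(X) \subsetneq I(X_1) \subsetneq I(X_2) \subsetneq \cdots
\end{equation*}
in $\mathbb{C}[x_1,\ldots,x_d]$. By the Hilbert basis theorem, this ring is Noetherian, so every ascending chain of ideals stabilizes, a contradiction. Hence $\mathcal{S}$ is empty, and every closed set decomposes as a finite union of irreducible closed sets, as required. The main obstacle is that the excerpt does not formally introduce the Noetherian property, so I need to invoke the Hilbert basis theorem as standard background; everything else is a direct translation between the geometric and algebraic sides of the Zariski topology.
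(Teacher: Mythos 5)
Your proof is correct and follows essentially the same route as the paper: translate a hypothetical infinite strictly descending chain of closed sets into a strictly ascending chain of ideals via $\mathcal{I}(\cdot)$ and $X=\mathcal{V}(\mathcal{I}(X))$, then contradict the Hilbert basis theorem. You even spell out the construction of the descending chain (picking a ``bad'' piece at each stage), which the paper leaves implicit.
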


In order to prove this theorem, it would be convenient to introduce an operation which is in some sense
the inverse of the map $I\mapsto \mathcal{V}(I)$. For a subset $X\subset \mathbb{C}^d$, we set
$$
\mathcal{I}(X)=\{P\in\mathbb{C}[x_1,\ldots,x_d]:\; P|_X=0\}.
$$ 
It is clear that $\mathcal{I}(X)$ is an ideal in the polynomial ring, and $\mathcal{V}(\mathcal{I}(X))\supset X$. 
In fact, one can check that $\mathcal{V}(\mathcal{I}(X))$ is precisely the closure of $X$
with respect to the Zariski topology.

\begin{proof}[Proof of Proposition \ref{th:irr}]
Suppose that the claim of the proposition is false. Then there exists an infinite  decreasing chain
\begin{equation}\label{eq:chain}
X\supsetneq X_1\supsetneq \cdots \supsetneq X_n\supsetneq\cdots
\end{equation}
where $X_i$'s are closed reducible sets. This gives an increasing chain of ideals
$$
\mathcal{I}(X_1)\subset \cdots \subset \mathcal{I}(X_n)\subset \cdots
$$
in $\mathbb{C}[x_1,\ldots,x_d]$. According to the Hilbert Basis Theorem \cite[Th.~7.5]{am}, every ideal
in $\mathbb{C}[x_1,\ldots,x_d]$ is finitely generated. In particular, the ideal 
$\cup_{n\ge 1} \mathcal{I}(X_n)$ is finitely generated, and it follows that 
$$
\mathcal{I}(X_n)=\mathcal{I}(X_{n+1})=\cdots
$$
for sufficiently large $n$. Since $X_i$'s are closed, $X_i=\mathcal{V}(\mathcal{I}(X_i))$, so that 
the chain (\ref{eq:chain}) stabilises, which is a contradiction.
\end{proof}

The proof of Proposition \ref{th:irr} demonstrates that geometric properties of closed sets
can be studied using tools from Commutative Algebra. This idea turns out to be extremely fruitful.

\begin{definition}
{\rm 
The \emph{coordinate ring} of a closed subset $X$  is defined by
$$
\mathcal{A}(X)=\mathbb{C}[x_1,\ldots,x_d]/\mathcal{I}(X)
$$
}
\end{definition}

Many geometric properties can reformulated in the language of Commutative Algebra,
as demonstrated by Table \ref{t:alg} below. 

\begin{table}[h]\label{t:alg}
\begin{tabular}{|c|c|}
\hline
& \\
{\bf Geometry} & {\bf Commutative Algebra}\\
& \\
\hline\hline
& \\
points in $X$ & algebra homomorphisms $\mathcal{A}(X)\to\mathbb{C}$\\
&\\
\hline
& \\
$X$ is irreducible & $\mathcal{A}(X)$ has no divisors of zero\\
& \\
\hline
& \\
polynomial maps $f:X\to Y$ & algebra homomorphisms $f_*: \mathcal{A}(Y)\to \mathcal{A}(X)$\\
& \\
\hline
& \\
$\overline{f(X)}=Y$ & $f_*$ is injective\\
& \\
\hline
\end{tabular}
\caption{Algebraic correspondence}
\end{table}

To check the first line of Table \ref{t:alg}, we observe that any $a\in\mathbb{C}^d$ defines
an algebra homomorphism
$$
\alpha_{a}: \mathbb{C}[x_1,\ldots,x_d]\to \mathbb{C}:P\mapsto P(a).
$$
Moreover, if $a\in X$, then $\mathcal{I}(X)\subset \ker(\alpha_{a})$ and
$\alpha_{a}$ defines a homomorphism $\mathcal{A}(X)\to \mathbb{C}$.
Conversely, any homomorphism $\mathcal{A}(X)\to \mathbb{C}$ is of the form
$P\mapsto P(a)$, where $Q(a)=0$ for all $Q\in \mathcal{I}(X)$, i.e., $a\in X$.

In regard to the third line, we note that any polynomial map $f:\mathbb{C}^{d_1}\to \mathbb{C}^{d_2}$
defines an algebra homomorphism 
$$
f_*:\mathbb{C}[y_1,\ldots,y_{d_2}]\to \mathbb{C}[x_1,\ldots,x_{d_1}]:P\mapsto P\circ f.
$$
This homomorphism defines a map $\mathbb{C}[y_1,\ldots,y_{d_2}]/\mathcal{I}(Y)\to
\mathbb{C}[x_1,\ldots,x_{d_1}]/\mathcal{I}(X)$ if 
$$
f_*(\mathcal{I}(Y))\subset \mathcal{I}(X)\;\; \Longleftrightarrow\;\; \forall P\in \mathcal{I}(Y):\, P|_{f(X)}=0\;\;
\Longleftrightarrow \;\; f(X)\subset Y.
$$
Conversely, any homomorphism $\mathcal{A}(Y)\to \mathcal{A}(X)$ is of this form.

To check the fourth property in Table 1, we observe that $\overline{f(X)}=Y$ is equivalent to 
$$
\forall P\in \mathcal{A}(Y)\backslash\{0\}:\, P|_{f(X)}\ne 0
\;\;\Longleftrightarrow\;\; 
\forall P\in \mathcal{A}(Y)\backslash\{0\}:\,
f_*(P)\notin \mathcal{I}(X),
$$
which means that $f_*:\mathcal{A}(Y)\to \mathcal{A}(X)$ is injective.

The following proposition will be used in the proof of Theorem \ref{th:alg_hom}.

\begin{proposition}\label{p:open}
Let $f:X\to Y$ be a polynomial map between closed sets $X$ and $Y$.
Then $f(X)$ contains an open subset of $\overline{f(X)}$.
\end{proposition}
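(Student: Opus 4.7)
The strategy is to translate the problem into commutative algebra via Table~\ref{t:alg} and then apply Noether normalization to the generic fibre. First, replacing $Y$ with the closed set $\overline{f(X)}$, we may assume $\overline{f(X)} = Y$. Using Proposition~\ref{th:irr}, decompose $X = X_1 \cup \cdots \cup X_\ell$ into irreducible components; each $\overline{f(X_i)}$ is irreducible (the continuous image of an irreducible set is irreducible, as is its closure), and $Y = \bigcup_i \overline{f(X_i)}$, so some $\overline{f(X_i)}$ coincides with an irreducible component $Y_j$ of $Y$. It therefore suffices to find a nonempty open subset of $Y_j$ contained in $f(X_i)$, since intersecting with the complement of the remaining irreducible components of $Y$ then yields a nonempty open subset of $Y$ contained in $f(X)$. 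We thus reduce to the case in which $X$ and $Y$ are both irreducible and $\overline{f(X)} = Y$.

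Under these assumptions, Table~\ref{t:alg} provides an injective $\mathbb{C}$-algebra homomorphism $f_\ast : A := \mathcal{A}(Y) \hookrightarrow B := \mathcal{A}(X)$ between finitely generated $\mathbb{C}$-algebras that are integral domains, and identifies points $y \in Y$ with $\mathbb{C}$-algebra homomorphisms $\alpha_y : A \to \mathbb{C}$ in such a way that $y \in f(X)$ if and only if $\alpha_y$ extends to a $\mathbb{C}$-algebra homomorphism $B \to \mathbb{C}$. Hence the proposition reduces to the following algebraic claim: \emph{there exists a nonzero $a \in A$ such that every homomorphism $\alpha : A \to \mathbb{C}$ with $\alpha(a) \neq 0$ admits an extension to $B$.} Granting this, the Zariski-open set $U = \{y \in Y : a(y) \neq 0\}$ is nonempty (because $A$ is a domain) and contained in $f(X)$.

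To prove the algebraic claim, pass to the generic fibre. Let $K = \mathrm{Frac}(A)$; then $B \otimes_A K$ is a finitely generated $K$-algebra which is still a domain, so by Noether normalization there exist $u_1, \ldots, u_r \in B \otimes_A K$, algebraically independent over $K$, such that $B \otimes_A K$ is a finite integral extension of $K[u_1, \ldots, u_r]$. Writing each $u_j = v_j / s_j$ with $v_j \in B$ and $s_j \in A \setminus \{0\}$, and writing each $A$-algebra generator of $B$ as a root of a polynomial over $A[u_1, \ldots, u_r]$ whose leading coefficient lies in $A$, one takes $a \in A$ to be the product of all the resulting denominators and leading coefficients. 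Then $A_a \subset A_a[u_1, \ldots, u_r] \subset B_a$ is a chain in which $A_a[u_1, \ldots, u_r]$ is a polynomial ring over $A_a$ and the second inclusion is a finite integral extension. Any $\alpha : A \to \mathbb{C}$ with $\alpha(a) \neq 0$ extends to $A_a$, then to $A_a[u_1, \ldots, u_r]$ by sending the $u_j$ to arbitrary scalars, and finally to $B_a \supset B$ by the lying-over theorem together with the algebraic closedness of $\mathbb{C}$.

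The main obstacle is the denominator-clearing step in the last paragraph: one needs a single $a \in A$ that simultaneously realises every $u_j$ as an element of $B_a$ and makes the integral equations for a generating set of $B$ monic over $A_a[u_1, \ldots, u_r]$. Once this bookkeeping is done, the rest is a routine translation between geometry and algebra via Table~\ref{t:alg}.
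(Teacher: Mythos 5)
Your proposal is correct, and at the geometric level it follows the paper's route exactly: reduce via Proposition~\ref{th:irr} to the case where $X$ is irreducible and $Y=\overline{f(X)}$ (your reduction is in fact spelled out more carefully than the paper's one-line reduction, and it is valid), translate through Table~\ref{t:alg} into the claim that some nonzero $a\in\mathcal{A}(Y)$ has the property that every $\mathbb{C}$-algebra homomorphism $\mathcal{A}(Y)\to\mathbb{C}$ not killing $a$ extends to $\mathcal{A}(X)$. The difference is in how that algebraic claim is handled: the paper simply cites it (Atiyah--Macdonald, Prop.~5.23), whose proof there proceeds by induction on the number of algebra generators, one element at a time, whereas you prove it directly by applying Noether normalization to the generic fibre $B\otimes_A K$, $K=\mathrm{Frac}(A)$, clearing denominators to get a chain $A_a\subset A_a[u_1,\dots,u_r]\subset B_a$ with the first step a polynomial extension and the second integral, and then extending homomorphisms by arbitrary assignment of the $u_j$ followed by lying over (plus algebraic closedness of $\mathbb{C}$, which via the Nullstellensatz makes the residue field at the relevant prime equal to $\mathbb{C}$). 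The denominator-clearing step you flag does work as you describe: a monic relation over $K[u_1,\dots,u_r]$ becomes, after multiplying by a single $d\in A\setminus\{0\}$, a relation with leading coefficient $d$, hence monic over $A_{a}[u_1,\dots,u_r]$ once $d$ divides $a$, and including the $s_j$ in $a$ puts the $u_j$ in $B_a$. So your argument is self-contained where the paper black-boxes the key lemma; the cost is the extra bookkeeping, the gain is that the proof exhibits the geometric content (generic finiteness over an affine-space factor) rather than deferring it to a reference.
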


We note that for this proposition it is crucial that the field $\mathbb{C}$ is algebraically closed,
and the analogous statement fails for polynomial maps $\mathbb{R}^{d_1}\to \mathbb{R}^{d_2}$.

\begin{proof}
Using Proposition \ref{th:irr}, we may reduce the proof to the case when $X$ is irreducible,
and without loss of generality we may assume that $Y=\overline{f(X)}$.
Then we have an injective algebra homomorphism $f_*:\mathcal{A}(Y)\to \mathcal{A}(X)$.
We claim that
\begin{equation}\label{eq:cc}
\exists P\in\mathcal{A}(Y):\, \{P\ne 0\}\cap Y\subset f(X).
\end{equation}
Since $\{P\ne 0\}\cap Y$ is open in $Y$, this implies the proposition.
We use the correspondence:
$$
\begin{tabular}{ccc}
$\{\hbox{points in $Y$}\}$ & $\longleftrightarrow$ & $\{\hbox{homomorphisms $\mathcal{A}(Y)\to\mathbb{C}$}\}$\\
$\bigcup$ && $\bigcup$\\
$\{\hbox{points in $f(X)$}\}$ & $\longleftrightarrow$ & $\{\hbox{homomorphisms factoring through $f_*$}\}$.
\end{tabular}
$$
Let $A=f_*(\mathcal{A}(Y))$ and $B=\mathcal{A}(X)$. The claim (\ref{eq:cc}) is equivalent to showing that
there exists $Q\in A$ such that every homomorphism $\phi:A\to \mathbb{C}$ with $\phi(Q)\ne 0$ extends to
a homomorphism $B\to \mathbb{C}$. This statement is proved, for instance, in \cite[Prop.~5.23]{am}. 
\end{proof}

Now we finally deduce Theorem \ref{th:alg_hom} from Proposition \ref{p:open}.

\begin{proof}[Proof of Theorem \ref{th:alg_hom}]
We consider the subgroup $L=f(G_1)$. Then its closure $\overline{L}$ is also a subgroup.
Indeed, since the multiplication and inverse operations are continuous in Zariski topology,
$$
\overline{L}{}^{-1} \cdot \overline{L}\subset \overline{L^{-1}\cdot L}\subset \overline{L}.
$$
By Proposition \ref{p:open}, $L$ contains an open subset of $\overline{L}$, and it follows that
$L$ is an open subgroup of $\overline{L}$. We have the coset decomposition 
$$
\overline{L}=\bigsqcup_{l\in
  \overline{L}/L} l L,
$$
where each of the cosets is open in $\overline{L}$. Hence, $L$ is closed,
which completes the proof.
\end{proof}

\begin{definition}
{\rm A closed subset $X\subset \mathbb{C}^d$ is \emph{defined over $K$}
(for a subfield $K$ of $\mathbb{C}$) if the ideal $\mathcal{I}(X)$ is generated by elements
in $K[x_1,\ldots,x_d]$.
}
\end{definition}

For a closed subset defined over $K$, we set $X(K)=X\cap K^d$.
In general, the set $X(K)$ could be quite small and even empty, but in the setting
 of algebraic groups, we have:

\begin{proposition}
Let $G$ be an algebraic group defined over $\mathbb{R}$.  
Then $G(\mathbb{R})$ is a Lie group of dimension $\dim_{\mathbb{C}} (G)$.
\end{proposition}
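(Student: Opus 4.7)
The plan is to establish two things: that $G(\mathbb{R})$ is closed in $\mathrm{GL}_d(\mathbb{R})$, so that it is a Lie group by definition, and that its real dimension equals the complex dimension of the algebraic group $G$. Since $G$ is defined over $\mathbb{R}$, the ideal $\mathcal{I}(G)$ is generated by polynomials $P_1,\dots,P_k\in\mathbb{R}[x_{11},\dots,x_{dd}]$, and so $G(\mathbb{R})=\{g\in\mathrm{GL}_d(\mathbb{R}):P_1(g)=\cdots=P_k(g)=0\}$ is Euclidean-closed in $\mathrm{GL}_d(\mathbb{R})$ as the common zero set of continuous real functions; it is clearly a subgroup, being $G\cap\mathrm{GL}_d(\mathbb{R})$. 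Thus $G(\mathbb{R})$ is a matrix Lie group in the sense of the paper.

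For the dimension, I would compute $\mathcal{L}(G(\mathbb{R}))$ by comparing it to the complex Lie algebra of $G$ viewed as a complex Lie subgroup of $\mathrm{GL}_d(\mathbb{C})$. Set
$$
\mathfrak{g}_{\mathbb{C}}=\{X\in\mathrm{M}_d(\mathbb{C}):\exp(tX)\in G\text{ for all }t\in\mathbb{R}\}.
$$
Repeating the analysis leading to Proposition \ref{p:nbhd} in the complex-analytic category, together with the fact that algebraic groups in characteristic zero are smooth varieties, identifies $\mathfrak{g}_{\mathbb{C}}$ with the Zariski tangent space $T_I G$ and yields $\dim_{\mathbb{C}}\mathfrak{g}_{\mathbb{C}}=\dim_{\mathbb{C}}G$. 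Writing $\mathfrak{g}_{\mathbb{R}}:=\mathfrak{g}_{\mathbb{C}}\cap\mathrm{M}_d(\mathbb{R})$, I note that $\mathfrak{g}_{\mathbb{C}}$ is the kernel of the complex linear map $X\mapsto(dP_1|_I(X),\dots,dP_k|_I(X))$, whose matrix has real entries; hence $\mathfrak{g}_{\mathbb{C}}=\mathfrak{g}_{\mathbb{R}}\oplus i\mathfrak{g}_{\mathbb{R}}$ and $\dim_{\mathbb{R}}\mathfrak{g}_{\mathbb{R}}=\dim_{\mathbb{C}}G$.

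Next I would verify that $\mathcal{L}(G(\mathbb{R}))=\mathfrak{g}_{\mathbb{R}}$. The inclusion $\subset$ is immediate from the definitions. Conversely, if $X\in\mathfrak{g}_{\mathbb{R}}$, then $\exp(tX)\in G$ for every $t\in\mathbb{R}$ and has real entries, so $\exp(tX)\in G\cap\mathrm{GL}_d(\mathbb{R})=G(\mathbb{R})$. Applying Proposition \ref{p:nbhd} to the real Lie group $G(\mathbb{R})$ now parametrises a neighbourhood of the identity in $G(\mathbb{R})$ by a neighbourhood of zero in $\mathfrak{g}_{\mathbb{R}}$, and therefore
$$
\dim_{\mathbb{R}} G(\mathbb{R})=\dim_{\mathbb{R}}\mathfrak{g}_{\mathbb{R}}=\dim_{\mathbb{C}}G.
$$

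The main obstacle is the equality $\dim_{\mathbb{C}}\mathfrak{g}_{\mathbb{C}}=\dim_{\mathbb{C}}G$: this rests on smoothness of $G$ as a complex variety at the identity, which for algebraic groups follows from translation invariance (the irreducible components of $G$ possess at least one smooth point by general algebraic-geometry considerations, and left translation is a polynomial isomorphism that transports smoothness to any other point, in particular to $I$). Once smoothness at the identity is granted, the identification of the complex Lie algebra with the tangent space and the passage from complex to real via the real defining equations are both routine.
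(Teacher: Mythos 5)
Your argument is correct and follows essentially the same route as the paper: identify the Lie algebras of $G$ and of $G(\mathbb{R})$ with the tangent space at the identity cut out by the differentials of real defining polynomials, and observe that a complex linear system with real coefficients has real solution space of the same dimension as its complex one. The only difference is that you spell out the smoothness of $G$ at the identity (needed to equate $\dim_{\mathbb{C}}G$ with the tangent-space dimension) and the splitting $\mathfrak{g}_{\mathbb{C}}=\mathfrak{g}_{\mathbb{R}}\oplus i\mathfrak{g}_{\mathbb{R}}$, points the paper leaves implicit.
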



\begin{proof}
Suppose that the group $G$ is defined by a system $P_1=\cdots =P_s=0$ of polynomial equations
with real coefficients. We recall from Remark \ref{r:lie} that the Lie algebra can be computed
as the tangent space at identity, so that
\begin{align*}
\mathcal{L}(G)&=\{X\in\hbox{M}_d(\mathbb{C}):\, (DP_1)_I X=\cdots=(DP_s)_I X=0\},\\
\mathcal{L}(G(\mathbb{R}))&=\{X\in\hbox{M}_d(\mathbb{R}):\, (DP_1)_I X=\cdots=(DP_s)_I X=0\}.
\end{align*}
Since 
$$
\dim_{\mathbb{C}}(\mathcal{L}(G))=\dim_{\mathbb{R}}(\mathcal{L}(G(\mathbb{R}))),
$$
the claim follows.
\end{proof}

The following result is one of the main theorems of this section, which shows that orbits for polynomial
actions behave nicely.

\begin{theorem}\label{th:orbit0} 
Let $G$ be an irreducible algebraic group defined over $\mathbb{R}$,
$X\subset \mathbb{C}^d$ a Zariski closed set defined over $\mathbb{R}$,
$x\in X(\mathbb{R})$, and $G\times X\to X$ a polynomial action defined over $\mathbb{R}$.
We denote by $Y$ the Zariski closure of $G\cdot x$ in $X$.
Then the map
$$
G(\mathbb{R})\to Y(\mathbb{R}):g\mapsto g\cdot x
$$
is open with respect to the Euclidean topology.
\end{theorem}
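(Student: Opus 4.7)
The plan is to show the orbit map is a real analytic submersion at the identity and then to spread openness by equivariance. The complex geometry, accessed through Proposition~\ref{p:open}, is what supplies the surjectivity of the differential. First I would verify that $Y$ is irreducible and defined over $\mathbb{R}$: irreducibility comes from $G$ being irreducible and $\phi:g\mapsto g\cdot x$ being polynomial, and $\mathbb{R}$-definability follows because complex conjugation fixes $x$ and preserves both $G$ and the action, hence preserves $G\cdot x$ and its Zariski closure. Applying Proposition~\ref{p:open} to $\phi:G\to Y$, the image $G\cdot x$ contains a non-empty Zariski open $U\subset Y$; since $G$ acts on $Y$ by polynomial automorphisms, $G\cdot x=\bigcup_{h\in G} h\cdot U$ is itself Zariski open in $Y$. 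The smooth locus $Y^{\mathrm{sm}}$ is a non-empty Zariski open of the irreducible variety $Y$, is $G$-invariant (automorphisms preserve smoothness), and meets the Zariski dense orbit, so $G\cdot x\subset Y^{\mathrm{sm}}$ and in particular $x$ is a smooth point of $Y$.

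The crux is to check that the complex differential $(D\phi)_e:\mathcal{L}(G)\to T_x Y$ is surjective. Since $\phi^{-1}(Y^{\mathrm{sm}})=G$ and both $G$ and $Y^{\mathrm{sm}}$ are smooth irreducible complex varieties, generic smoothness in characteristic zero provides a point $g_0\in G$ at which $\phi$ is a submersion. The equivariance $\phi\circ L_h=A_h\circ\phi$, where $L_h$ is left translation on $G$ and $A_h$ denotes the action of $h$ on $Y$, transfers this submersiveness from $g_0$ to every translate $hg_0$ via the isomorphisms $DL_h$ and $DA_h$, hence to all of $G$ and in particular to $e$. Because $\phi$ is $\mathbb{R}$-defined, $(D\phi)_e$ is represented by a matrix with real entries, so its rank over $\mathbb{R}$ equals its rank over $\mathbb{C}$; the restricted real differential $\mathcal{L}(G(\mathbb{R}))\to T_x Y(\mathbb{R})$ is therefore surjective onto the real tangent space, which at the smooth real point $x$ has real dimension $\dim_{\mathbb{C}} Y$ by the real implicit function theorem applied to the $\mathbb{R}$-coefficient equations defining $Y$.

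With these ingredients in place, $Y(\mathbb{R})$ is a real analytic manifold of dimension $\dim_{\mathbb{C}} Y$ in a Euclidean neighbourhood of $x$, and the orbit map restricts to a real analytic map $G(\mathbb{R})\to Y(\mathbb{R})$ with surjective differential at $e$. The real submersion theorem then produces an open Euclidean neighbourhood of $x$ in the image of any neighbourhood of $e$, and left $G(\mathbb{R})$-equivariance spreads this openness to every point of $G(\mathbb{R})$. The main obstacle I anticipate is the submersion step itself: confirming that $x$ lies in $Y^{\mathrm{sm}}$, establishing the complex differential's surjectivity by combining generic smoothness with equivariance, and descending cleanly to the real points with the dimensions matching between $\mathcal{L}(G(\mathbb{R}))$, $T_x Y(\mathbb{R})$, and their complexifications.
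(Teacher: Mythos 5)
Your proposal follows essentially the same route as the paper's proof: Proposition~\ref{p:open} plus transitivity gives Zariski openness of $G\cdot x$ in $Y$, the orbit then lies in the smooth locus, the differential of the orbit map is surjective and (since everything is defined over $\mathbb{R}$) remains surjective on real tangent spaces, and the implicit function theorem together with equivariance yields Euclidean openness. The only difference is that you spell out, via generic smoothness in characteristic zero and equivariance, the surjectivity of the complex differential, which the paper asserts in a single line, so your argument is a correct and slightly more detailed rendering of the same proof.
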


\begin{proof}
Without loss of generality, we may assume that $X=Y$. Then since $G$ is irreducible, $Y$ is irreducible.
For Proposition \ref{p:open} we know that $G\cdot x$ contains a Zariski open subset $X$.
Since $G$ acts transitively on $G\cdot x$, it follows that $G\cdot x$ is, in fact, Zariski open in $X$.

Let $X_0$ be the set of smooth points of $X$ (i.e., the set of points where the tangent space has minimal
dimension). This set is Zariski open in $X$ and $G$-invariant. Since $X$ is irreducible, the intersection
of finitely many nonempty Zariski open subsets in $X$ is nonempty. In particular,
$G\cdot x\cap X_0\ne \emptyset$, and it follows that $G\cdot x\subset X_0$.

We consider the orbit map $F:G\to X: g\mapsto g\cdot x$ and its derivative
$(DF)_g: T_g(G)\to T_{g\cdot x}(X)$, where  $T_g(G)$ and $T_{g\cdot x}(X)$
denote the corresponding tangent spaces. Since $G\cdot x$ is Zariski open in $X$,
the map $(DF)_g$ is onto. Then the map $(DF)_g: T_g(G(\mathbb{R}))\to T_{g\cdot x}(X(\mathbb{R}))$
is also onto. Hence, by the Implicit Function Theorem, the map $F:G(\mathbb{R})\to X(\mathbb{R})$
is open with respect to the Euclidean topology, as required.
\end{proof}

\begin{definition}
{\rm
Let $\{s(t)\}_{t\in \mathbb{R}}$ be a one-parameter group acting on a topological space $X$.
A point $x\in X$ is called \emph{recurrent} if $s(t_n)\cdot x\to x$ for some sequence $t_n\to \infty$.
}
\end{definition}

Using Theorem \ref{th:orbit}, we obtain a complete description of recurrent points for algebraic actions.

\begin{corollary}\label{c:rec}
Let $S=\{s(t)\}_{t\in\mathbb{C}}$ be a one-dimensional algebraic group defined over $\mathbb{R}$,
$X\subset \mathbb{C}^d$ a Zariski closed set defined over $\mathbb{R}$,
and $S\times X\to X$ a polynomial action defined over $\mathbb{R}$.
Then all $S(\mathbb{R})$-recurrent points in $X(\mathbb{R})$ are fixed by $S$.
\end{corollary}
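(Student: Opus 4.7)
The plan is a contradiction argument using Theorem \ref{th:orbit0}. Suppose $x \in X(\mathbb{R})$ is $S(\mathbb{R})$-recurrent but not fixed by $S$, so there is a sequence $t_n \to \infty$ with $s(t_n) \cdot x \to x$. Let $Y$ be the Zariski closure of $S \cdot x$ in $X$. The stabiliser $S_x = \{g \in S : g \cdot x = x\}$ is a Zariski closed proper subgroup of the irreducible one-dimensional group $S$, so $\dim S_x = 0$ and $S_x$ is a finite set; in particular $\dim Y = \dim S = 1$.

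Applying Theorem \ref{th:orbit0} to the action of $S$ on $Y$, the orbit map $\pi \colon S(\mathbb{R}) \to Y(\mathbb{R})$, $g \mapsto g \cdot x$, is open with respect to the Euclidean topology. Because its fibres are cosets of the finite set $S_x(\mathbb{R})$, $\pi$ is then a local homeomorphism onto $S(\mathbb{R}) \cdot x$. Fix a Euclidean neighbourhood $V$ of the identity in $S(\mathbb{R})$ so small that $\pi|_V$ is injective; then $\pi^{-1}(\pi(V)) = \bigsqcup_{h \in S_x(\mathbb{R})} V h$ is a finite disjoint union.

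The recurrence $s(t_n) \cdot x \to \pi(e)$ therefore forces $s(t_n) \in V h_n$ for large $n$ and some $h_n \in S_x(\mathbb{R})$; the finiteness of $S_x(\mathbb{R})$ permits passing to a subsequence along which $h_n$ is a constant $h$, and shrinking $V$ yields $s(t_n) \to h^{-1}$ in $S(\mathbb{R})$. The main obstacle is now to reconcile this convergence with $t_n \to \infty$: the resolution is that $t \mapsto s(t)$ is a proper embedding of $\mathbb{R}$ into $S(\mathbb{R})$, which is transparent in the standard case $S \cong \mathbb{G}_a$ where $S(\mathbb{R}) = \mathbb{R}$ and $\pi$ is in fact a global homeomorphism onto the orbit, so $s(t_n) \cdot x \to x$ directly forces $t_n \to 0$, contradicting $t_n \to \infty$.
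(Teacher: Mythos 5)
Your use of Theorem \ref{th:orbit0} is the same key input as in the paper, and the argument is sound up to the point where you conclude that (a subsequence of) $s(t_n)$ converges in $S(\mathbb{R})$. The genuine gap is the final step: the assertion that $t\mapsto s(t)$ is a proper embedding of $\mathbb{R}$ into $S(\mathbb{R})$ is exactly what you need to contradict $t_n\to\infty$, and you neither prove it nor does it follow from the hypotheses. You only treat $S\cong\mathbb{G}_a$; but a one-dimensional algebraic group can also be a form of $\mathbb{G}_m$. For the split form, e.g.\ $s(t)=\hbox{diag}(e^t,e^{-t})$, properness is true but still needs an argument; for the anisotropic form $S=\hbox{SO}_2(\mathbb{C})$ with $s(t)$ the rotation by angle $t$, the map $s$ is $2\pi$-periodic, so the claimed properness (indeed even injectivity) fails outright --- this is the degenerate situation that the statement and the paper's proof implicitly exclude by assuming $t\mapsto s(t)$ injective on $\mathbb{R}$. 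So as written, your conclusion rests on an unproved, and in full generality false, claim.

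The paper's argument avoids both the contradiction setup and any classification of $S$: by openness of the orbit map, $s((-\epsilon,\epsilon))\cdot x$ is open in $\overline{S(\mathbb{R})\cdot x}$, so for large $n$ one can write $s(t_n)\cdot x=s(\tau_n)\cdot x$ with $|\tau_n|<\epsilon$, hence $s(t_n-\tau_n)\in\hbox{Stab}_S(x)$ with $t_n-\tau_n\to\infty$. This produces infinitely many stabiliser elements; since $\hbox{Stab}_S(x)$ is Zariski closed and a proper Zariski closed subset of the irreducible one-dimensional group $S$ is finite, it follows that $\hbox{Stab}_S(x)=S$, i.e.\ $x$ is fixed. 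The point is that the size of the stabiliser is measured in the Zariski topology rather than by forcing convergence of the parameters $t_n$ in the Euclidean topology, so no properness of the parametrisation is needed (only its injectivity, the same implicit hypothesis as in the statement). You can repair your proposal by replacing the properness appeal with this stabiliser argument; once you do, the local-homeomorphism analysis and the finiteness assumption on $S_x$ become unnecessary.
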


\begin{proof}
By Theorem \ref{th:orbit0}, the set $s((-\epsilon,\epsilon))\cdot x$ is open in
$\overline{S(\mathbb{R})\cdot x}$. 
Hence, if $s(t_n)\cdot x\to x$, then $s(t_n)\cdot x\in  s((-\epsilon,\epsilon))\cdot x$
for all sufficiently large $n$. This implies that $\hbox{Stab}_S(x)$ is infinite.
Since $S$ is one-dimensional, $\hbox{Stab}_S(x)$ is Zariski dense in $S$.
On the other hand, it is clear that $\hbox{Stab}_S(x)$ is Zariski closed.
Thus, $\hbox{Stab}_S(x)=S$, as claimed.
\end{proof}

We complete this section with discussion of semisimple and unipotent elements.

\begin{definition}
{\rm
\begin{itemize}
\item An element $g\in \hbox{GL}_d(\mathbb{C})$ is called \emph{semisimple} if it is diagonalisable over
  $\mathbb{C}$.
\item An element $g\in \hbox{GL}_d(\mathbb{C})$ is called \emph{unipotent} if all of its eigenvalues of
  $g$
are equal to one.
\end{itemize}
}
\end{definition}

We note that it follows from the Jordan Canonical Form that every  element $g\in \hbox{GL}_d(\mathbb{C})$
can written as $g=g_s g_u$ where $g_s$ and $g_u$ are commuting semisimple and unipotent elements.

\begin{theorem}\label{th:us}
Let $\rho:\hbox{\rm GL}_d(\mathbb{C})\to\hbox{\rm GL}_N(\mathbb{C})$ be a polynomial homomorphism. Then
\begin{itemize}
\item if $g\in \hbox{\rm GL}_d(\mathbb{C})$ is semisimple,  $\rho(g)$ is also semisimple,
\item if $g\in \hbox{\rm GL}_d(\mathbb{C})$ is unipotent, $\rho(g)$ is also unipotent.
\end{itemize}
\end{theorem}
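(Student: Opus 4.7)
The plan is to handle the two cases separately by restricting $\rho$ to a natural one-dimensional or toral algebraic subgroup through $g$ and exploiting polynomiality of $\rho$ to constrain the image.

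For the unipotent case, the idea is that if $g$ is unipotent, then $X := \log g = \sum_{n \ge 1}(-1)^{n-1}(g-I)^n/n$ is a finite sum, so $X$ is nilpotent and $g = \exp(X)$. The map $\phi(t) := \rho(\exp(tX))$ is then a one-parameter group whose entries are polynomials in $t$, since $\exp(tX)$ has polynomial entries (the series terminates) and $\rho$ is polynomial. By Theorem \ref{th:one-par}, $\phi(t) = \exp(tY)$ for some $Y$; and polynomiality of $t \mapsto \exp(tY)$ forces $Y$ to be nilpotent, because any nonzero eigenvalue $\lambda$ of $Y$ would introduce a non-polynomial entry $e^{\lambda t}$. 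Hence $\rho(g) = \phi(1) = \exp(Y)$ is unipotent.

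For the semisimple case, after conjugating we may assume $g$ lies in the diagonal torus $T = \{\mathrm{diag}(\mu_1, \ldots, \mu_d) : \mu_i \in \mathbb{C}^*\}$. The restriction $\rho|_T$ is a Laurent polynomial in the $\mu_i$, so we can write
\[
\rho(\mu_1, \ldots, \mu_d) = \sum_{\alpha \in \mathbb{Z}^d} \mu^{\alpha} A_\alpha
\]
as a finite sum of matrices $A_\alpha$. Matching coefficients in the identities $\rho(1) = I$ and $\rho(\mu\nu) = \rho(\mu)\rho(\nu)$ forces the $A_\alpha$ to be pairwise orthogonal idempotents summing to $I$, which produces a simultaneous eigenspace decomposition $\mathbb{C}^N = \bigoplus_\alpha \mathrm{Im}(A_\alpha)$ on whose $\alpha$-th summand $\rho(\mu)$ acts as the scalar $\mu^\alpha$. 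In particular $\rho(g)$ is diagonalisable, i.e.\ semisimple.

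The principal subtlety lies in the semisimple case: one must first make precise what ``polynomial homomorphism'' means on $\mathrm{GL}_d(\mathbb{C})$ (Laurent polynomials in the matrix entries, or equivalently polynomials in those entries together with $\det^{-1}$) in order to obtain a genuine Laurent expansion upon restriction to $T$. Once this is in place, the coefficient-matching is routine, and it reproduces the classical fact that every rational representation of an algebraic torus decomposes as a direct sum of characters.
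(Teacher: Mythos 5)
Your proof is correct, but it follows a genuinely different route from the paper's. The paper handles both bullets with a single device: it looks at the powers $g^n$ and observes that the matrix entries of $\rho(g^n)=\rho(g)^n$ are, by polynomiality of $\rho$, polynomials in the entries of $g^n$ --- hence polynomials in $n$ when $g$ is unipotent, and linear combinations of the sequences $\lambda_1^n,\ldots,\lambda_s^n$ (the eigenvalues of $g$) when $g$ is semisimple; comparing this with the Jordan structure of $\rho(g)$ (an eigenvalue $\lambda$ of $\rho(g)$ forces the sequence $\lambda^n$ to be polynomial in $n$, and a nontrivial Jordan block over $V_\lambda$ forces $\lambda^{-n}\rho(g)^n|_{V_\lambda}$ to be a nonconstant polynomial in $n$) yields both claims via linear independence of exponential sequences against polynomials. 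You instead treat the unipotent case by passing to the one-parameter subgroup $t\mapsto\exp(tX)$ with $X=\log g$ nilpotent, invoking Theorem \ref{th:one-par} to write $\rho(\exp(tX))=\exp(tY)$ and using polynomiality in $t$ to force $Y$ nilpotent, and you treat the semisimple case by the classical character-space decomposition of a torus: restricting $\rho$ to the diagonal torus, expanding in Laurent monomials $\mu^\alpha$, and matching coefficients in $\rho(\mu\nu)=\rho(\mu)\rho(\nu)$ and $\rho(1)=I$ to exhibit the $A_\alpha$ as orthogonal idempotents summing to $I$. Your torus argument actually proves more (simultaneous diagonalisation of the image of the whole torus, i.e.\ the weight decomposition), at the cost of the definitional point you rightly flag --- one needs ``polynomial'' on $\hbox{GL}_d(\mathbb{C})$ to yield Laurent expansions on $T$, which holds whether or not $\det^{-1}$ is allowed --- and of the tacit use of uniqueness of Laurent coefficients on $(\mathbb{C}^*)^d$. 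The paper's argument is more elementary and self-contained, needing only the growth comparison above and no structure theory of the torus. Two small points to tighten in yours: Theorem \ref{th:one-par} is stated for $\mathbb{R}\to\hbox{GL}_d(\mathbb{R})$, so either note that its proof works verbatim over $\mathbb{C}$ or realify $\hbox{GL}_N(\mathbb{C})\subset\hbox{GL}_{2N}(\mathbb{R})$; and the step ``polynomial entries of $\exp(tY)$ force $Y$ nilpotent'' is cleanest after conjugating $Y$ to Jordan form, where the diagonal entries $e^{\lambda_i t}$ must themselves be polynomials. Neither point is a gap.
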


\begin{proof}
Suppose that $g$ is semisimple. 
Let $V_\lambda\subset\mathbb{C}^N$ be a Jordan subspace of $\rho(g)$ with the eigenvalue $\lambda$.
Then the linear map $\lambda^{-n}\rho(g)^n|_{V_\lambda}$ has coordinates which are polynomials in $n$.
On the other hand, these coordinates can be expressed as polynomials in $\lambda^{-n}$,
$\lambda^n_1,\cdots,\lambda^n_s$ where $\lambda_i$'s are the eigenvalues of $g$. This implies that
all these coordinates are constant, and $\lambda^{-n}\rho(g)^n|_{V_\lambda}=1$. Hence, $\rho(g)$ is semisimple.

Suppose that $g$ is unipotent. Let $v\in \mathbb{C}^N$ be an eigenvector of $\rho(g)$ with eigenvalue
$\lambda$. Then $\rho(g^n)v=\lambda^n v$, but $\rho(g^n)v$ has coordinates which are polynomials in $n$.
This implies that $\lambda=1$. Hence, $\rho(g)$ is unipotent.
\end{proof}

\section{Lattices -- geometric constructions}

A linear flow on the torus $\mathbb{T}^d=\mathbb{R}^d/\mathbb{Z}^d$ 
is one of the most basic examples of dynamical systems.
More generally, one may consider a factor space $\Gamma\backslash G$, where
$G$ is a Lie group and $\Gamma$ is a discrete subgroup, and define 
a flow on $X$ acting by a one-parameter subgroup of $G$.
In some cases the space $\Gamma\backslash G$ can be equipped with a finite invariant measure.
This construction provides a rich and very important family of dynamical systems.
Besides the theory of dynamical systems, such spaces also play important role in
geometry and number theory.

In this section, we cover basic material regarding the factor spaces $\Gamma\backslash G$.
In  particular, we define a measure on $\Gamma\backslash G$, which
is induced by the invariant measure on $G$, and explain the Poincare's geometric construction
of discrete cocompact subgroup $\Gamma$ in $\hbox{SL}_2(\mathbb{R})$.

Let $G$ be a Lie group and $\Gamma$ a discrete subgroup of $G$.

\begin{definition}
{\rm
A subset $F\subset G$ is called a \emph{fundamental set} for $\Gamma$ if 
$G$ is equal to the disjoint union of the sets $\gamma F$, $\gamma\in\Gamma$:
$$
G=\bigsqcup_{\gamma\in \Gamma} \gamma F.
$$
}
\end{definition}

For example, $F=[0,1)^d$ is a fundamental set of $\mathbb{Z}^d\subset\mathbb{R}^d$.

\begin{lemma}
There exists a Borel fundamental set of $\Gamma$.
\end{lemma}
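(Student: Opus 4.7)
The plan is to construct $F$ as a countable patchwork of Borel pieces, each obtained from a small open set by removing the $\Gamma$-saturation of the previously chosen pieces. Two preliminary facts will do most of the work: first, since $\Gamma$ is discrete in the metrizable second-countable group $G \subset \operatorname{GL}_d(\mathbb{R})$, the subgroup $\Gamma$ is countable; second, for every $g \in G$ we may choose an open neighborhood $W_g$ of $g$ such that
$$W_g W_g^{-1} \cap \Gamma = \{e\}.$$
Indeed, discreteness of the (still discrete) subgroup $g^{-1}\Gamma g$ gives a symmetric open neighborhood $V$ of the identity with $V^2 \cap g^{-1}\Gamma g = \{e\}$, and $W_g := gV$ then satisfies $W_g W_g^{-1} = g V^2 g^{-1}$, which meets $\Gamma$ only in $e$. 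The geometric consequence is the key point: if $\gamma_1, \gamma_2 \in \Gamma$ and $\gamma_1 W_g \cap \gamma_2 W_g \neq \emptyset$, then $\gamma_2^{-1}\gamma_1 \in W_g W_g^{-1}$, hence $\gamma_1 = \gamma_2$.

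Next, using second countability (equivalently, the Lindelöf property) of $G$, I would extract from $\{W_g : g \in G\}$ a countable subcover $\{W_n\}_{n \geq 1}$ of $G$. Then I would define a sequence of Borel sets inductively by
$$F_1 = W_1, \qquad F_n = W_n \setminus \Gamma \cdot (F_1 \cup \cdots \cup F_{n-1}) \quad (n \geq 2),$$
and set $F = \bigcup_{n \geq 1} F_n$. Borel measurability is automatic: each $F_n$ is a difference of a Borel set and a countable union of Borel sets (here I use that $\Gamma$ is countable and left-translation is a homeomorphism), so by induction all $F_n$, and thus $F$, are Borel.

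It remains to verify the two defining properties of a fundamental set. For coverage $G = \Gamma F$: given $g \in G$, let $n$ be the least index such that $\Gamma g \cap W_n \neq \emptyset$, and pick $\gamma_0 \in \Gamma$ with $\gamma_0 g \in W_n$. If $\gamma_0 g$ belonged to $\Gamma(F_1 \cup \cdots \cup F_{n-1}) \subset \Gamma(W_1 \cup \cdots \cup W_{n-1})$, the minimality of $n$ would be violated, so $\gamma_0 g \in F_n \subset F$, giving $g \in \Gamma F$. For disjointness $\gamma_1 F \cap \gamma_2 F = \emptyset$ when $\gamma_1 \neq \gamma_2$: suppose $\gamma_1 f_1 = \gamma_2 f_2$ with $f_i \in F_{n_i}$. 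If $n_1 \neq n_2$, say $n_2 < n_1$, then $f_1 = \gamma_1^{-1}\gamma_2 f_2 \in \Gamma \cdot F_{n_2}$, contradicting the definition of $F_{n_1}$. If $n_1 = n_2 = n$, both $f_1$ and $\gamma_2^{-1}\gamma_1 f_1 = f_2$ lie in $W_n$, forcing $\gamma_2^{-1}\gamma_1 \in W_n W_n^{-1} \cap \Gamma = \{e\}$.

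The only genuinely delicate point is the construction of the neighborhoods $W_g$ with $W_g W_g^{-1} \cap \Gamma = \{e\}$; once that is available, the rest is a bookkeeping argument that is essentially the same as the construction of a Borel transversal for any countable group action with wandering neighborhoods. I expect no subtlety beyond ensuring that the shrinking of $V$ is done using discreteness of the conjugated subgroup $g^{-1}\Gamma g$ rather than $\Gamma$ itself, since $\Gamma$-translates of a single neighborhood of the identity need not behave uniformly far from $e$.
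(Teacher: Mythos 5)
Your proof is correct and follows essentially the same strategy as the paper: cover $G$ by countably many translates of neighbourhoods on which distinct $\Gamma$-translates are disjoint, then disjointify greedily by removing the $\Gamma$-saturation of earlier pieces. The paper streamlines the first step by using \emph{right} translates $Ug_n$ of a single neighbourhood $U$ of the identity with $\Gamma\cap UU^{-1}=\{I\}$, since $(Ug)(Ug)^{-1}=UU^{-1}$ for every $g$, which removes the need for your conjugated neighbourhoods $gVg^{-1}$ and makes each piece an open set minus open sets (so countability of $\Gamma$ is not even needed for Borel measurability); otherwise the arguments coincide.
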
 

\begin{proof}
Since $\Gamma$ is a discrete subgroup of $G$, there exists a neighbourhood $U$
of identity in $G$ such that 
\begin{equation}\label{eq:e1}
\Gamma\cap U\cdot U^{-1}=\{I\}.
\end{equation}
We can write 
\begin{equation}\label{eq:e2}
G=\bigcup_{n=1}^\infty Ug_n
\end{equation}
for a sequence $g_n\in G$. Let
$$
F=\bigcup_{n=1}^\infty \left(Ug_n\backslash (\cup_{i=1}^{n-1} \Gamma Ug_i)\right).
$$
It follows from (\ref{eq:e2}) that $G=\Gamma F$, and using (\ref{eq:e1}), it is easy to deduce that
if $\gamma_1 F\cap\gamma_2 F\ne \emptyset$, then $\gamma_1=\gamma_2$. Hence, $F$ is a fundamental set for $\Gamma$. 
\end{proof}

We denote by $m$ the left-invariant measure on $G$ constructed in Section 2 and 
by $\pi:G\to \Gamma\backslash G$ the factor map. Taking a Borel fundamental domain $F$ for $\Gamma$,
we define a measure on $\Gamma\backslash G$ by 
$$
\mu(B)=m(\pi^{-1}(B)\cap F)\quad\hbox{for all Borel $B\subset \Gamma\backslash G$.}
$$

\begin{lemma}
\begin{enumerate}
\item[(i)] The definition of $\mu$ does not depend on a choice of the fundamental domain $F$.
\item[(ii)] If $m(F)<\infty$, then the measure $\mu$ is right $G$-invariant.
\end{enumerate}
\end{lemma}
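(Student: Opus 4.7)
For part (i), given two Borel fundamental domains $F_1$ and $F_2$ and a Borel set $B\subset\Gamma\backslash G$, I would write $A=\pi^{-1}(B)$ and exploit the fact that $A$ is $\Gamma$-invariant under left multiplication. The key decomposition is
$$A\cap F_1 = \bigsqcup_{\gamma\in\Gamma} (A\cap F_1\cap \gamma F_2).$$
Applying left-invariance of $m$ together with $\gamma^{-1}A=A$ gives $m(A\cap F_1\cap\gamma F_2)=m(A\cap\gamma^{-1}F_1\cap F_2)$, and summing over $\gamma$ together with $\bigsqcup_\gamma \gamma^{-1}F_1 = G$ produces $m(A\cap F_1)=m(A\cap F_2)$.

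For part (ii), I plan to reduce to a proportionality statement and then use the finite-covolume hypothesis to pin down the constant. Fix $g_0\in G$ and note that $Fg_0^{-1}$ is again a Borel fundamental domain, since left and right multiplication commute. Define $m'(A)=m(Ag_0)$. Then $m'$ is a nonzero left-invariant locally finite Borel measure on $G$, so by the uniqueness Theorem~\ref{th:unique} there exists $c(g_0)>0$ with $m(Ag_0)=c(g_0)\,m(A)$ for every Borel $A$. Using $\pi^{-1}(Bg_0)=\pi^{-1}(B)g_0$ and the set identity $\pi^{-1}(B)g_0\cap F=(\pi^{-1}(B)\cap Fg_0^{-1})g_0$, I would compute
$$\mu(Bg_0)=m\!\left((\pi^{-1}(B)\cap Fg_0^{-1})g_0\right)=c(g_0)\,m(\pi^{-1}(B)\cap Fg_0^{-1})=c(g_0)\,\mu(B),$$
where the last step uses part (i) applied to the fundamental domain $Fg_0^{-1}$.

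To finish, I would specialise to $B=\Gamma\backslash G$, which is fixed by the right $G$-action, giving $\mu(\Gamma\backslash G)=c(g_0)\,\mu(\Gamma\backslash G)$. The assumption $m(F)<\infty$ gives $\mu(\Gamma\backslash G)=m(F)<\infty$, and $m(F)>0$ because $m$ is nonzero, locally finite, and $\sigma$-finite while $G=\bigsqcup_\gamma \gamma F$ is countable. Cancelling then yields $c(g_0)=1$ for every $g_0\in G$, which is exactly right-invariance of $\mu$.

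The main obstacle is the point hidden inside part (ii): what looks like a direct right-invariance check actually forces $G$ to be unimodular, and this is precisely the role of the finiteness hypothesis $m(F)<\infty$. Without it, one only obtains $\mu(Bg_0)=c(g_0)\mu(B)$ with $c$ a nontrivial character (the modular function), and the conclusion fails. Correctly identifying this proportionality via Theorem~\ref{th:unique} and then eliminating the constant using the total $\mu$-mass is, I expect, the only delicate step; the rest is bookkeeping with the tiling $G=\bigsqcup_\gamma \gamma F$.
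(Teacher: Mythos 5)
Your proposal is correct and follows essentially the same route as the paper: for (i) the decomposition of $\pi^{-1}(B)\cap F_1$ over the translates $\gamma F_2$ combined with left-invariance and $\Gamma$-invariance of $\pi^{-1}(B)$, and for (ii) the uniqueness theorem applied to $A\mapsto m(Ag_0)$ to produce the constant $c(g_0)$, which is then forced to equal $1$ by the finiteness of $m(F)$. Your extra remarks (that $Fg_0^{-1}$ is again a fundamental domain, and that $m(F)>0$) only make explicit what the paper leaves implicit.
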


\begin{proof}
Let $F_1,F_2\subset G$ be Borel fundamental sets for $\Gamma$.
Since
$$
G=\bigsqcup_{\gamma\in \Gamma} \gamma F_1=\bigsqcup_{\gamma\in \Gamma} \gamma F_2,
$$
we obtain using left-invariance of $m$ that for every Borel $B\subset \Gamma\backslash G$,
\begin{align*}
m(\pi^{-1}(B)\cap F_1)&=\sum_{\gamma\in\Gamma} m(\pi^{-1}(B)\cap F_1\cap \gamma F_2)\\
&=\sum_{\gamma\in\Gamma} m(\pi^{-1}(B)\cap \gamma^{-1}F_1\cap  F_2)=m(\pi^{-1}(B)\cap F_2).
\end{align*}
This proves (i).

To prove (ii), we consider the measures $m_g$, $g\in G$, on $G$ defined by
$$
m_g(A)=m(Ag)\quad\hbox{for all Borel $A\subset G$.}
$$
It is clear that $m_g$ is left-invariant and locally finite, so that by Theorem \ref{th:unique},
$m_g=c_g\cdot m$ for some $c_g>0$. Since $Fg^{-1}$ is also a fundamental domain for $\Gamma$,
for every Borel $B\subset \Gamma\backslash G$,
\begin{align}\label{eq:c_g}
m(\pi^{-1}(Bg)\cap F)&=m_g(\pi^{-1}(B)\cap Fg^{-1})=c_g\, m(\pi^{-1}(B)\cap Fg^{-1})\\
&=c_g\, m(\pi^{-1}(B)\cap F).  \nonumber
\end{align}
This shows that
$$
\mu(B g)=c_g\, \mu(B)\quad\hbox{for all Borel $B\subset \Gamma\backslash G$.}
$$
It follows from (\ref{eq:c_g}) that $m(F)=c_g\, m(F)$. Hence, if $m(F)<\infty$, then
$c_g=1$, and the measure $\mu$ is $G$-invariant.
\end{proof}

\begin{definition}
{\rm 
A discrete subgroup $\Gamma$ of a Lie group $G$ is called a \emph{lattice} if
$\mu(\Gamma\backslash G)<\infty$.
}
\end{definition}

One can show that any lattice in $\mathbb{R}^d$ is of the form $\mathbb{Z}^dv_1+\cdots+\mathbb{Z}^dv_d$,
where $v_1,\ldots,v_d$ is a basis of $\mathbb{R}^d$. The situation is much more interesting for 
lattices in $\hbox{SL}_2(\mathbb{R})$, and in the rest of this section we construct some examples
of such lattices. 

\vspace{0.5cm}

We introduce the upper-half model of the hyperbolic plane. Let
$$
\mathbb{H}=\{x+iy:\, x\in\mathbb{R}, y>0\}.
$$
The (hyperbolic) length of a $C^1$ curve $c:[0,1]\to \mathbb{H}$ is defined by
$$
L(c)=\int_0^1 \frac{\|c'(t)\|}{\hbox{\small Im}(c(t))}\, dt.
$$
For $g=\left( 
\begin{tabular}{ll}
$a$ & $b$\\
$c$ & $d$
\end{tabular}
\right)\in \hbox{SL}_2(\mathbb{R})$, we define 
$$
T_g:\mathbb{H}\to \mathbb{H}: z\mapsto \frac{az+b}{cz+d}.
$$
The following properties are easy to check:
\begin{enumerate}
\item[(i)] $T_g=id$ if and only if $g=\pm I$,
\item[(ii)] $T_{g_1}T_{g_2}=T_{g_1g_2}$,
\item[(iii)] $\hbox{Im}(T_g(z))=\frac{\hbox{\small Im}(z)}{|cz+d|^2}$,
\item[(iv)] $\hbox{Stab}_{\hbox{\tiny SL}_2(\mathbb{R})}(i)=\hbox{SO}_2(\mathbb{R})$,
\item[(v)] $T_g$ preserves length and angles between curves.
\end{enumerate}
Note that (iii) implies that $T_g(\mathbb{H})\subset\mathbb{H}$.
Let 
$$
u(x)=\left( 
\begin{tabular}{ll}
$1$ & $x$\\
$0$ & $1$
\end{tabular}
\right)\quad\hbox{and}\quad a(y)=\left( 
\begin{tabular}{ll}
$y^{1/2}$ & $0$\\
$0$ & $y^{-1/2}$
\end{tabular}
\right).
$$
Then 
\begin{equation}\label{eq:T}
T_{u(x)a(y)}(i)=x+iy.
\end{equation}
This shows that $\hbox{SL}_2(\mathbb{R})$
acts transitively on $\mathbb{H}$, and by (iv),
$$
\mathbb{H}\simeq \hbox{SL}_2(\mathbb{R})/\hbox{SO}_2(\mathbb{R}).
$$
Moreover, we deduce the Iwasawa decomposition:
\begin{equation}\label{eq:iwasawa}
\hbox{SL}_2(\mathbb{R})=\{u(x)a(y)k:\, x\in \mathbb{R}, y>0, k\in\hbox{SO}_2(\mathbb{R})\}.
\end{equation}

Now we identify the shortest paths in $\mathbb{H}$.

\begin{lemma}
The geodesic (i.e., the shortest path) between $z_1,z_2\in\mathbb{H}$ is either
a vertical line of a semi-circle with the centre on the $x$-axis.
\end{lemma}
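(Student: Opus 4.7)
The plan is to use the isometric action of $\mathrm{SL}_2(\mathbb{R})$ on $\mathbb{H}$ (property~(v)) to reduce to the case where both points lie on the positive imaginary axis, where a direct length estimate settles the claim. First, given $z_1 = x_1 + iy_1$, I would apply the Iwasawa decomposition~(\ref{eq:iwasawa}): from (\ref{eq:T}) the element $g_1 = a(1/y_1)u(-x_1)$ satisfies $T_{g_1}(z_1) = i$. Then, using $\mathrm{Stab}_{\mathrm{SL}_2(\mathbb{R})}(i) = \mathrm{SO}_2(\mathbb{R})$ (property~(iv)), I would choose a rotation $k_\theta \in \mathrm{SO}_2(\mathbb{R})$ so that $T_{k_\theta}(T_{g_1}(z_2))$ lies on the imaginary axis; this amounts to solving the real equation $\Re\bigl(T_{k_\theta}(T_{g_1}(z_2))\bigr) = 0$ in $\theta$, which has a solution by continuity, since the $\mathrm{SO}_2(\mathbb{R})$-orbit of any point of $\mathbb{H}$ is a closed loop around $i$ and therefore must cross the imaginary axis.

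Setting $g = k_\theta g_1$, we obtain $T_g(z_1) = ia$ and $T_g(z_2) = ib$ for some $0 < a \leq b$. For any $C^1$ curve $c(t) = x(t) + iy(t)$ joining $ia$ and $ib$, the length formula yields
$$
L(c) = \int_0^1 \frac{\sqrt{x'(t)^2 + y'(t)^2}}{y(t)}\,dt \geq \int_0^1 \frac{|y'(t)|}{y(t)}\,dt \geq \log(b/a),
$$
with equality if and only if $x \equiv 0$ and $y$ is monotone. Thus the vertical segment on the imaginary axis is the unique geodesic between $ia$ and $ib$, and since $T_g$ is an isometry, the geodesic between $z_1$ and $z_2$ is its $T_g$-preimage.

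To finish, I would identify this preimage as a vertical line or a semi-circle centred on the $x$-axis. Rather than appeal to the general fact that Möbius transformations preserve generalized circles, I would argue constructively by reversing the reduction. Given $z_1, z_2 \in \mathbb{H}$, there is a unique such curve $C$ through them: the vertical line $\Re(z) = \Re(z_1)$ when $\Re(z_1) = \Re(z_2)$, and otherwise the circle through $z_1, z_2$ whose centre is the intersection of the perpendicular bisector of $[z_1, z_2]$ with the real axis. Letting $\alpha < \beta$ be the endpoints of $C$ on $\mathbb{R} \cup \{\infty\}$, I would exhibit an explicit element of $\mathrm{SL}_2(\mathbb{R})$ sending $\alpha \mapsto 0$ and $\beta \mapsto \infty$ (a suitably rescaled real Möbius transformation, whose determinant must be positive so that $\mathbb{H}$ maps to $\mathbb{H}$); one checks that it carries $C \cap \mathbb{H}$ bijectively onto the positive imaginary axis. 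Applying the previous step to the images of $z_1, z_2$ then shows that the geodesic between them lies on the imaginary axis, so the geodesic between $z_1, z_2$ lies on $C$. The main obstacle is this last construction, specifically verifying the determinant sign, that the upper half-plane is preserved, and that $C \cap \mathbb{H}$ maps to the upper rather than lower imaginary axis — but these are all routine checks once the element is written down.
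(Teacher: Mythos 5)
Your proposal is correct and follows essentially the same route as the paper: reduce by the isometric $\mathrm{SL}_2(\mathbb{R})$-action to the case of two points on the positive imaginary axis, where the estimate $L(c)\ge\int_0^1 \frac{|y'(t)|}{y(t)}\,dt$ shows the vertical segment is the unique geodesic, and then transport back by the isometry. The only difference is one of direction and explicitness: the paper pushes the $y$-axis forward under $T_g^{-1}$ and computes its image, while you map the vertical line or circle through $z_1,z_2$ orthogonal to the real axis onto the imaginary axis by an explicit element — the same identification, just carried out in reverse and with the routine claims ("one can find $g$", "computed directly") filled in.
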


\begin{proof}
We first consider the case when $\hbox{Re}(z_1)=\hbox{Re}(z_2)$.
Given a path $c:[0,1]\to \mathbb{H}$ between $z_1$ and $z_2$, we have an estimate
$$
L(c)=\int_0^1 \frac{\sqrt{c_1'(t)^2+c_2'(t)^2}}{c_2(t)}\, dt \ge \int_0^1 \frac{|c'_2(t)|}{c_2(t)}\, dt,
$$
where the equality holds when $c_1'=0$. This implies that the shortest path is a vertical line.

In general, given $z_1,z_2\in\mathbb{H}$, one can find $g\in \hbox{SL}_2(\mathbb{R})$
such that 
$$
\hbox{Re}(T_g(z_1))=\hbox{Re}(T_g(z_2))=0.
$$
Then it follows from the property (v)
that the shortest between $z_1$ and $z_2$ is the image of the $y$-axis under the transformation
$T_g^{-1}$. It can be computed directly that this image is either a vertical line or a semi-circle.
\end{proof}

Besides the transformations $T_g$, we also introduce \emph{reflexion} maps $R_\ell$ with respect to 
a geodesic $\ell$. Given $z\in \mathbb{H}$, we draw a geodesic through $z$ which is orthogonal to
$\ell$ and define $R_\ell(z)$ as the reflection with respect to the intersection point.
More explicitly, if $\ell_0$ is the $y$-axis, then $R_{\ell_0}:z\mapsto -\bar z$,
and in general $R_\ell=T_g^{-1}R_{\ell_0}T_g$, where $g\in \hbox{SL}_2(\mathbb{R})$ is such that
$T_g(\ell)=\ell_0$. We note that the transformations $R_\ell$ also preserve length and angles between curves,
and the group generated by the transformations $T_g$ and $R_\ell$ is an index two supergroup of 
$T_{\hbox{\tiny SL}_2(\mathbb{R})}$.

\vspace{0.5cm}

Now we are ready to construct a family of cocompact lattices in $\hbox{SL}_2(\mathbb{R})$.
One can check that for every $\alpha,\beta,\gamma>0$ such that $\alpha+\beta+\gamma<\pi$
there exists a geodesic triangle with angles $\alpha,\beta,\gamma$.
We fix a triangle $\mathcal{T}$ with angles $\frac{\pi}{n_1},\frac{\pi}{n_2},\frac{\pi}{n_3}$ where $n_i$'s
are integers, and denote by $R_1,R_2,R_3$ the reflections
with respect to the sides of this triangle. Let $\Lambda$ be the group generated by 
these transformations. For every $\lambda\in\Lambda$, $\lambda\mathcal{T}$ is another triangle
with the same dimensions. Since $n_1,n_2,n_3$ are integers, the images of $\mathcal{T}$ fit together
perfectly around every vertex. Hence, we obtain the tiling
\begin{equation}\label{eq:tiling}
\mathbb{H}=\bigcup_{\lambda\in\Lambda} \lambda \mathcal{T},
\end{equation}
and if $\lambda_1 \mathcal{T}^\circ\cap \lambda_2 \mathcal{T}^\circ\ne \emptyset$, then
$\lambda_1 \mathcal{T}^\circ=\lambda_2 \mathcal{T}^\circ$ and $\lambda_1=\lambda_2$.
Let $\Lambda_0$ be the subgroup of $\Lambda$ of index two consisting of elements which are products
of even number of reflections. Then $\Lambda_0\subset T_{\hbox{\tiny SL}_2(\mathbb{R})}$. We set
$$
\Gamma=T^{-1}(\Lambda_0)\subset \hbox{SL}_2(\mathbb{R}).
$$

\begin{theorem}
The group $\Gamma$ is a cocompact lattice in $\hbox{\rm SL}_2(\mathbb{R})$.
\end{theorem}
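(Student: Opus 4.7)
The plan is to use the tiling (\ref{eq:tiling}) to produce a compact fundamental domain for $\Lambda_0$ on $\mathbb{H}$, lift it via the Iwasawa decomposition to a compact set in $\hbox{SL}_2(\mathbb{R})$ that surjects onto $\Gamma\backslash\hbox{SL}_2(\mathbb{R})$, and handle discreteness through the fibration $g\mapsto T_g(i)$.

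First I would fix a reflection $R\in\Lambda\setminus\Lambda_0$ and set $F_0=\mathcal{T}\cup R\mathcal{T}$. Since $[\Lambda:\Lambda_0]=2$ and $\mathcal{T}$ is a fundamental set for $\Lambda$ by (\ref{eq:tiling}), $F_0$ is a fundamental set for $\Lambda_0$, with disjointness of interiors of the $\Lambda_0$-translates inherited directly from the tiling. Because all three angles of $\mathcal{T}$ are strictly positive, its vertices lie in $\mathbb{H}$, so $\mathcal{T}$ and hence $F_0$ are compact. Next, writing $g=u(x)a(y)k$ via (\ref{eq:iwasawa}) so that $T_g(i)=x+iy$ by (\ref{eq:T}), I would define
$$
\tilde F=\{u(x)a(y)k:\, x+iy\in F_0,\, k\in \hbox{SO}_2(\mathbb{R})\}.
$$
The Iwasawa homeomorphism $\mathbb{R}\times\mathbb{R}^+\times\hbox{SO}_2(\mathbb{R})\cong\hbox{SL}_2(\mathbb{R})$ identifies $\tilde F$ with $F_0\times\hbox{SO}_2(\mathbb{R})$, so $\tilde F$ is compact.

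For cocompactness, given $g\in \hbox{SL}_2(\mathbb{R})$ I would choose $\lambda\in\Lambda_0$ with $\lambda^{-1}T_g(i)\in F_0$ and lift to some $\gamma\in T^{-1}(\lambda)\subset\Gamma$; then $T_{\gamma^{-1}g}(i)=\lambda^{-1}T_g(i)\in F_0$, so $\gamma^{-1}g\in\tilde F$. Thus $\hbox{SL}_2(\mathbb{R})=\Gamma\tilde F$, the continuous image of the compact set $\tilde F$ covers $\Gamma\backslash\hbox{SL}_2(\mathbb{R})$, yielding both compactness and $\mu(\Gamma\backslash\hbox{SL}_2(\mathbb{R}))\le m(\tilde F)<\infty$. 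For discreteness, local finiteness of the tiling (any compact $K\subset\mathbb{H}$ meets only finitely many $\lambda\mathcal{T}$, since the tiles have uniform positive hyperbolic area) implies $\Lambda$ acts properly discontinuously on $\mathbb{H}$, and in particular $\Lambda$ is discrete in the isometry group of $\mathbb{H}$, so $\Lambda_0$ is discrete in $\hbox{PSL}_2(\mathbb{R})$; since $T:\hbox{SL}_2(\mathbb{R})\to\hbox{PSL}_2(\mathbb{R})$ has discrete kernel $\{\pm I\}$, $\Gamma=T^{-1}(\Lambda_0)$ is discrete in $\hbox{SL}_2(\mathbb{R})$.

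The step I expect to be most delicate is the local-finiteness/proper-discontinuity claim for the tiling, which is geometrically transparent but needs a short area-comparison to make rigorous (each $\lambda\mathcal{T}$ has the same finite hyperbolic area, and any compact $K$ has finite area, bounding the number of tiles it meets). Everything else is bookkeeping with the Iwasawa decomposition and the covering $T:\hbox{SL}_2(\mathbb{R})\to\hbox{PSL}_2(\mathbb{R})$, using that the action of $\Gamma$ on $\hbox{SL}_2(\mathbb{R})/\hbox{SO}_2(\mathbb{R})$ matches the action of $\Lambda_0$ on $\mathbb{H}$.
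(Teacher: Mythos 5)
Your proposal is correct and follows essentially the same route as the paper: lift the compact set $\mathcal{T}\cup R\mathcal{T}$ through the map $g\mapsto T_g(i)$ (your Iwasawa parametrisation of $\tilde F$ is just an explicit form of the paper's properness argument for this map) to get $\hbox{SL}_2(\mathbb{R})=\Gamma\tilde F$, and deduce discreteness from local finiteness of the tiling. The only difference is that you flesh out details the paper leaves implicit (compactness of $\tilde F$, the area-comparison behind local finiteness), which is fine.
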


\begin{proof}
We consider the map
$$
p:\hbox{SL}_2(\mathbb{R})\to\mathbb{H}: g\mapsto T_g(i).
$$
It satisfies the equivariance property
$$
p(gh)=T_g(p(h))\quad\hbox{ for all $g,h\in\mathbb{H}$.}
$$
Using (\ref{eq:T}) and (\ref{eq:iwasawa}), it is easy to deduce that 
this map is proper, so that
$$
F=p^{-1}(\mathcal{T}\cup R_1(\mathcal{T}))
$$
is compact. Since 
$$
\mathbb{H}=\bigcup_{\lambda\in\Lambda_0} \lambda(\mathcal{T}\cup R_1(\mathcal{T})),
$$
we conclude that $G=\Gamma F$. Hence, $\Gamma$ is cocompact.

To prove that $\Gamma$ is discrete, we observe that every compact subset in $\mathbb{H}$
is covered by finitely many tiles in (\ref{eq:tiling}). This implies that for every 
compact $\Omega\subset \mathbb{H}$,
$$
|\{\lambda\in \Lambda:\, \lambda\Omega\cap \Omega\}|<\infty.
$$
Therefore, for any compact $\tilde\Omega \subset\hbox{SL}_2(\mathbb{R})$,
$$
|\{\gamma\in \Gamma:\, \gamma\tilde \Omega\cap \tilde \Omega\}|<\infty.
$$
It follows that every compact subset of $\hbox{SL}_2(\mathbb{R})$ contains only finitely
many elements of $\Gamma$, so that $\Gamma$ is discrete.
\end{proof}

\section{Lattices -- arithmetic constructions} 

In this section we discuss arithmetic constructions of lattices in Lie groups
beginning with the most basic example:

\begin{theorem}\label{th:sl}
$\hbox{\rm SL}_d(\mathbb{Z})$ is a lattice in $\hbox{\rm SL}_d(\mathbb{R})$.
\end{theorem}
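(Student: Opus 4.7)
The plan is to construct a Siegel set $\Sigma \subset \mathrm{SL}_d(\mathbb{R})$ which (after accounting for a finite fundamental domain inside it) contains a fundamental set for $\Gamma = \mathrm{SL}_d(\mathbb{Z})$, and then to show that its Haar measure is finite. The key tool is the Iwasawa decomposition $G = NAK$, where $N$ denotes the upper unitriangular matrices, $A$ the positive diagonal matrices of determinant one, and $K = \mathrm{SO}_d(\mathbb{R})$. Using Proposition \ref{p:dec} (iterated, since $G$ is unimodular and $NA \cap K = \{I\}$), the Haar measure on $G$ in these coordinates takes the form $dm = J(a)\, dn\, da\, dk$, where $dn$, $da$, $dk$ are Haar measures on $N$, $A$, $K$, and $J(a) = \prod_{i<j}(a_i/a_j)$ is an explicit Jacobian coming from the conjugation action of $A$ on $N$.

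Next, I would fix a compact set $\omega \subset N$ which is a fundamental domain for the cocompact subgroup $N \cap \Gamma$ (integer upper unitriangular matrices — a fundamental domain can be taken as the unit cube in the strictly upper entries), and for $t > 0$ define
\[
A_t = \left\{ a = \mathrm{diag}(a_1,\ldots,a_d) \in A \,:\, a_i/a_{i+1} \leq t \text{ for } i=1,\ldots,d-1 \right\},
\]
and the \emph{Siegel set}
\[
\Sigma_t = \omega \cdot A_t \cdot K.
\]
The proof then reduces to two independent statements: (a) for $t$ sufficiently large, $G = \Gamma \cdot \Sigma_t$, and (b) $m(\Sigma_t) < \infty$; together these give a Borel fundamental set $F \subset \Sigma_t$ with $m(F) \leq m(\Sigma_t) < \infty$, whence $\Gamma$ is a lattice.

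For (b), the integral factorizes: $m(\Sigma_t) = m_N(\omega) \cdot m_K(K) \cdot \int_{A_t} J(a)\, da$. After the substitution $a_i = e^{h_i}$ with $\sum h_i = 0$, the region $A_t$ becomes the half-space $\{h_i - h_{i+1} \leq \log t\}$, and $J(a)\, da$ becomes $\exp\!\big(\sum_{i<j}(h_i - h_j)\big) \prod dh_i$, which is a standard exponential integral over a simplicial cone and is elementary to show convergent.

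For (a), which I expect to be the main obstacle, I would appeal to Minkowski-style reduction theory for lattices. Using the identification $g\Gamma \leftrightarrow g\mathbb{Z}^d$ of $G/\Gamma$ with the space of unimodular lattices in $\mathbb{R}^d$, the claim becomes: every unimodular lattice $L$ admits a basis whose Gram-Schmidt orthogonalization has diagonal entries with ratios bounded by $t$. I would prove this by induction on $d$. The base case is trivial. For the inductive step, use Minkowski's first theorem to find a nonzero shortest vector $v_1 \in L$; rotate by some $k \in K$ so that $v_1$ is a multiple of $e_1$; project $L$ orthogonally onto $e_1^\perp$ to get a unimodular lattice $L'$ in $\mathbb{R}^{d-1}$; apply the inductive hypothesis to $L'$; and use minimality of $\|v_1\|$ to bound $a_1/a_2$ by an absolute constant (this is where the successive minima get controlled). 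Finally, since $\omega$ is a fundamental domain for $N \cap \Gamma$, multiplying on the left by an appropriate element of $N \cap \Gamma$ reduces the unipotent part into $\omega$, producing an element of $\Sigma_t$ in the $\Gamma$-orbit. Combining (a) and (b) completes the proof.
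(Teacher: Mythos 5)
Your overall strategy coincides with the paper's: Iwasawa decomposition (Lemma \ref{l:iwasawa}), a Siegel set, a covering statement proved by reduction theory with a shortest vector and induction on $d$ (Lemma \ref{l:cover}), and finiteness of the Haar measure of the Siegel set via the explicit density in Iwasawa coordinates obtained from Proposition \ref{p:dec} (Lemmas \ref{l:measure} and \ref{l:mm}). However, there is a concrete inconsistency of conventions which, taken literally, breaks your step (b). For the order $g=nak$ that you use to define $\Sigma_t=\omega A_t K$, the Haar density is $\prod_{i<j}(a_j/a_i)\,dn\,da\,dk$, not $\prod_{i<j}(a_i/a_j)$: the correct factor is $\det\bigl(\mathrm{Ad}(a^{-1})|_{\mathrm{Lie}(N)}\bigr)$, and the formula you wrote is the one for the order $g=kan$. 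Correspondingly, the covering that your reduction argument actually yields (shortest vector first, Gram--Schmidt on the columns of the basis matrix, so $h=kan$ with increasing diagonal ratios) is $G=(KA_t\omega)\,\Gamma$ for the column-lattice/right-coset picture $g\mathbb{Z}^d$; transported to your $nak$/left-coset formulation it becomes the paper's condition $a_i\le t\,a_{i-1}$ (cusp at $a_d\to 0$), not $a_i/a_{i+1}\le t$. With the correct $nak$ density, the set you actually wrote down has infinite measure already for $d=2$: its image in $\mathbb{H}$ under $g\mapsto T_g(i)$ is $\{x+iy:\ |x|\le 1/2,\ 0<y\le t\}$, and $\int_0^t y^{-2}\,dy=\infty$. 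Your convergence computation only appears to work because the Jacobian was inverted at the same time as the inequality on $A$; the two slips cancel formally, but statement (b) is false for the set as defined, and statement (a) for that set is not what your reduction argument proves.

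The repair is purely a matter of aligning conventions, after which your proof is exactly the paper's: either work throughout with $g=kan$, lattices $g\mathbb{Z}^d$, the covering $G=\Sigma_t\Gamma$, the condition $a_i/a_{i+1}\le t$ and density $\prod_{i<j}(a_i/a_j)$; or keep $g=nak$ with left cosets and row lattices $\mathbb{Z}^d g$ as in the paper, in which case the Siegel condition must be $a_i/a_{i-1}\le t$ and the density is $\prod_{i<j}(a_j/a_i)$ (this is the $\prod_i b_i^{r_i}$ of Lemma \ref{l:measure}). In either consistent version your steps (a) and (b) are correct and are precisely Lemmas \ref{l:cover} and \ref{l:mm}. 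A minor further point: for $d\ge 3$ the unit cube in the strictly upper-triangular entries is a covering set for $N\cap\Gamma$ acting on $N$ rather than an exact fundamental domain, but since only covering and boundedness are used, this does not affect the argument.
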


In order to prove this theorem, it would be convenient to identify the factor space
$\hbox{SL}_d(\mathbb{Z})\backslash\hbox{SL}_d(\mathbb{R})$ with the space of unimodular lattices in $\mathbb{R}^d$.
A lattice in $\mathbb{R}^d$ is subgroup of the form $\mathbb{Z}v_1+\cdots+\mathbb{Z}v_d$
where $\{v_1,\ldots,v_d\}$ is a basis of $\mathbb{R}^d$. We denote by $\mathcal{L}_d$ the
set of lattices in $\mathbb{R}^d$ with covolume one. 
It would be convenient to write elements of $\mathbb{R}^d$ as row vectors.

We observe that the group
$\hbox{SL}_d(\mathbb{R})$ naturally acts on $\mathcal{L}_d$:
$$
L\mapsto Lg,\quad\quad L\in \mathcal{L}_d,\; g\in G.
$$
This action is transitive and the stabiliser of the lattice $\mathbb{Z}^d$ is $\hbox{SL}_d(\mathbb{Z})$,
so that we have the identification
$$
\mathcal{L}_d\simeq\hbox{SL}_d(\mathbb{Z})\backslash\hbox{SL}_d(\mathbb{R}).
$$

We introduce the \emph{Iwasawa decomposition} for $\hbox{SL}_d(\mathbb{R})$, which is a generalisation of (\ref{eq:iwasawa}).

\begin{lemma}\label{l:iwasawa}
$$
\hbox{\rm SL}_d(\mathbb{R})=NAK,
$$
where
\begin{align*}
N&=\hbox{ the unipotent upper triangular group},\\
A&=\left\{\hbox{\rm diag}(a_1,\ldots,a_d):\,\, a_i>0,\, \prod_{i=1}^d a_i=1\right\},\\
K&=\hbox{\rm SO}_d(\mathbb{R}).
\end{align*}
\end{lemma}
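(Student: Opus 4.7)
The plan is to prove the Iwasawa decomposition by applying the Gram--Schmidt orthogonalization procedure to the rows of $g \in \mathrm{SL}_d(\mathbb{R})$, thereby producing the $NA$ and $K$ factors constructively, and then verifying that the determinant constraint forces us into $\mathrm{SO}_d(\mathbb{R})$ and into the subgroup $A$ of unit-determinant positive diagonal matrices.

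The first step is a reformulation: since $N$ is unipotent upper triangular and $A$ consists of positive diagonal matrices, writing $g = nak$ is equivalent to writing $g = Tk$ with $T$ upper triangular having strictly positive diagonal entries and $k \in \mathrm{SO}_d(\mathbb{R})$; one then recovers $a$ as the diagonal of $T$ and $n = T a^{-1}$. So it suffices to produce such a factorisation $g = Tk$.

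Next, let $v_1, \ldots, v_d$ denote the rows of $g$, which form a basis of $\mathbb{R}^d$ since $\det(g) = 1$. Apply Gram--Schmidt in reverse order, starting from $v_d$: set $e_d = v_d / \|v_d\|$, and having constructed unit vectors $e_{i+1}, \ldots, e_d$ spanning $\langle v_{i+1}, \ldots, v_d\rangle$, define $e_i$ as the unit vector in $\langle v_i, e_{i+1}, \ldots, e_d\rangle$ orthogonal to $e_{i+1}, \ldots, e_d$, with the sign chosen so that the coefficient of $e_i$ in $v_i$ is positive. Then $\{e_1, \ldots, e_d\}$ is an orthonormal basis, and by construction $v_i = \sum_{j \geq i} t_{ij} e_j$ with $t_{ii} > 0$. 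Letting $k$ be the matrix with rows $e_1, \ldots, e_d$ and $T = (t_{ij})$, we have $g = T k$ with $k \in \mathrm{O}_d(\mathbb{R})$ and $T$ upper triangular with positive diagonal.

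Finally, the determinant condition $1 = \det(g) = \det(T)\det(k)$ combined with $\det(T) = \prod t_{ii} > 0$ forces $\det(k) > 0$, hence $\det(k) = 1$ and so $k \in \mathrm{SO}_d(\mathbb{R})$; moreover $\prod t_{ii} = 1$, so the diagonal of $T$ lies in $A$. Writing $a = \mathrm{diag}(t_{11}, \ldots, t_{dd})$ and $n = T a^{-1}$, we have $n \in N$, $a \in A$, $k \in K$, and $g = nak$. The main obstacle here is purely bookkeeping: one must take care to do Gram--Schmidt in the correct (reverse) order so that the resulting transition matrix is upper rather than lower triangular, and to track the sign choices for each $e_i$ to ensure positivity of the diagonal. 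Otherwise the argument is a direct consequence of the standard Gram--Schmidt construction.
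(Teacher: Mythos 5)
Your proof is correct and follows exactly the route the paper intends: the paper simply remarks that the lemma ``is easy to prove using the Gram--Schmidt orthonormalisation process,'' and your argument fills in those details, correctly performing Gram--Schmidt on the rows in reverse order (to match the row-vector convention and the upper-triangular form of $N$) and handling the sign and determinant bookkeeping properly.
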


Lemma \ref{l:iwasawa} is easy to proved using the Gramm--Schmidt orthonormalisation process.

In order to prove Theorem \ref{th:sl}, it is sufficient to construct
a set $\Sigma\subset \hbox{\rm SL}_d(\mathbb{R})$
such that $\hbox{\rm SL}_d(\mathbb{R})=\hbox{\rm SL}_d(\mathbb{Z})\Sigma$ and $m(\Sigma)<\infty$,
where $m$ is the invariant measure on $\hbox{\rm SL}_d(\mathbb{R})$.
Hence, Theorem \ref{th:sl} would follow from Lemmas \ref{l:cover} and \ref{l:mm} below.
For this purpose, we introduce the \emph{Siegel sets}:
$$
\Sigma_{s,t}=\{nak:\,\, |n_{ij}|\le s,\, a_i/a_{i-1}\le t,\, k\in\hbox{SO}_d(\mathbb{R})\}.
$$

\begin{lemma}\label{l:cover}
For $s\ge 1/2$ and $t\ge 2/\sqrt{3}$, 
$$
\hbox{\rm SL}_d(\mathbb{R})=\hbox{\rm SL}_d(\mathbb{Z})\Sigma_{s,t}.
$$
\end{lemma}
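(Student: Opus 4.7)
The plan is to use the identification $\mathrm{SL}_d(\mathbb{Z})\backslash \mathrm{SL}_d(\mathbb{R})\simeq \mathcal{L}_d$ and construct, for each $g\in\mathrm{SL}_d(\mathbb{R})$, a reduced $\mathbb{Z}$-basis of the lattice $L=\mathbb{Z}^d g$. Choosing such a basis is the same as choosing $\gamma\in\mathrm{SL}_d(\mathbb{Z})$ so that the rows of $\gamma g$ form this basis; the problem becomes to arrange $\gamma g\in\Sigma_{s,t}$.

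Writing the rows of $\gamma g$ as $v_1,\ldots,v_d$, the Iwasawa decomposition $\gamma g=nak$ can be read off from Gram--Schmidt orthogonalization starting from the bottom: set $v_d^\ast = v_d$, and inductively $v_i^\ast = v_i - \sum_{j>i}\mu_{ij}\,v_j^\ast$ with $\mu_{ij}=\langle v_i,v_j^\ast\rangle/\|v_j^\ast\|^2$. Then $a_i = \|v_i^\ast\|$ and $n_{ij}=\mu_{ij}$ for $i<j$. I would construct the basis inductively from the bottom so that (a) $v_d$ is a shortest nonzero vector of $L=:L_d$; (b) having chosen $v_{i+1},\ldots,v_d$, let $L_i$ be the projection of $L$ to the orthogonal complement of $\mathrm{span}(v_{i+1},\ldots,v_d)$ (equivalently, $L/(\mathbb{Z}v_{i+1}+\cdots+\mathbb{Z}v_d)$ identified with a full-rank lattice in that complement via primitivity of $v_{i+1},\ldots,v_d$), and choose $v_i$ so that its image in $L_i$, namely $v_i^\ast$, is a shortest nonzero vector of $L_i$; (c) finally, size-reduce the resulting basis by replacing $v_i \mapsto v_i - k v_j$ for $j>i$ with $k$ the nearest integer to $\mu_{ij}$, so that $|\mu_{ij}|\le 1/2$ throughout. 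A direct computation confirms that these size-reductions leave every $v_j^\ast$ unchanged, so (a)--(b) remain intact.

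Given such $v_1,\ldots,v_d$, the off-diagonal bound $|n_{ij}|=|\mu_{ij}|\le 1/2\le s$ follows immediately from (c). For the diagonal bound, observe that in $L_i$ the vector $v_i^\ast$ is shortest by (b), while the image of $v_{i-1}$ in $L_i$ equals $v_{i-1}^\ast + \mu_{i-1,i}\,v_i^\ast$, which is not a scalar multiple of $v_i^\ast$ (because $v_{i-1}^\ast\neq 0$). Hence
\[
\|v_i^\ast\|^2 \,\le\, \|v_{i-1}^\ast + \mu_{i-1,i}\,v_i^\ast\|^2 \,=\, \|v_{i-1}^\ast\|^2 + \mu_{i-1,i}^2\,\|v_i^\ast\|^2 \,\le\, \|v_{i-1}^\ast\|^2 + \tfrac14\,\|v_i^\ast\|^2,
\]
which gives $a_i/a_{i-1}\le 2/\sqrt{3}\le t$, as required.

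The main obstacle is organizing the recursion correctly: one must verify that each $L_i$ really is a lattice of full rank $i$ in its subspace (so that the shortest vectors required by (b) exist), and that size reduction in (c) can be performed without destroying the minimality arrangement from (a)--(b). Both points reduce to the observation that a shortest nonzero vector in any lattice is automatically primitive (so that quotienting produces a full-rank lattice), together with the Gram--Schmidt identity showing that adding an integer multiple of $v_j$ (with $j>i$) to $v_i$ changes only $\mu_{ij}$, leaving $v_k^\ast$ for $k\neq i$ and the lattices $L_k$ themselves unchanged.
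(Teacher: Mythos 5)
Your proposal is correct and takes essentially the same route as the paper: your conditions (a)--(c) are exactly the paper's notion of a reduced basis (shortest vector, recursion on the projected lattice, size reduction), and the bounds $|n_{ij}|\le 1/2$ and $a_i/a_{i-1}\le 2/\sqrt{3}$ come from the same Gram--Schmidt computation. The additional care you take with primitivity of shortest vectors and the invariance of the $v_j^\ast$ under size reduction simply fills in details the paper delegates to ``induction on $d$''.
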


\begin{proof}
Let $g\in \hbox{\rm SL}_d(\mathbb{R})$ and $L=\mathbb{Z}^dg$. We would like to find a basis
of the lattice $L$ with the ``least complexity''. We say that a basis $(v_1,\ldots,v_d)$ of $L$ is \emph{reduced} if
\begin{enumerate}
\item[(i)] The vector $v_d$ has the smallest norm in $L\backslash \{0\}$,
\item[(ii)] Let $P:\mathbb{R}^d\to \left<v_d\right>^\perp$ denote the orthogonal projection.
The tuple $(P(v_1),\ldots,P(v_{d-1}))$ is a reduced basis of $P(L)$.
\item[(iii)] The vectors $P(v_i)$, $i\ge 2$, have the minimal norms in $P^{-1}(P(v_i))$.
\end{enumerate}
Using induction on $d$, one can show that a reduced basis always exists.

Let $(v_1,\ldots,v_d)$ be a reduced basis of $L$. There exists $h\in\hbox{SL}_d(\mathbb{R})$
such that $v_i=e_i h$, where $(e_1,\ldots,e_d)$ is the standard basis.
Since $\mathbb{Z}^dg=\mathbb{Z}^dh$, we have $g\in \hbox{SL}_d(\mathbb{Z}) h$. Hence,  
it is sufficient to show that $h$ belong to the Siegel set $\Sigma_{s,t}$.

We decompose $h$ as $h=nak$ with $n\in N$, $a\in A$, and $k\in K$. Let 
$$
w_i=v_ik^{-1}=e_ina=a_i e_i +a_{i+1}n_{i,i+1}e_{i+1}+\cdots +a_d n_{i,d}e_d.
$$
Since $k$ preserve the Euclidean product, $(w_1,\ldots,w_d)$ is a reduced
basis for the lattice $Lk^{-1}$. We claim that 
\begin{equation}\label{eq:ind0}
|n_{ij}|\le 1/2\quad\hbox{and}\quad a_i/a_{i-1}\le 2/\sqrt{3}.
\end{equation}
Because of property (ii), we may assume inductively that (\ref{eq:ind0}) holds for $i,j\le d-1$.
Property (iii) implies that for all $\ell \in \mathbb{Z}$,
$$
\|w_i\|=\sqrt{a_i^2+\cdots+a_d^2n_{id}^2}\ge \|w_i+\ell w_d\|=\sqrt{a_i^2+\cdots+a_d^2(n_{id}+\ell)^2}.
$$
This implies that $|n_{id}|\le 1/2$. By property (i),
$$
\|w_d\|=a_d\le \|w_{d-1}\|=\sqrt{a_{d-1}^2+a_d^2n_{d-1,d}^2}.
$$ 
Hence, 
$$
a_d^2\le a_{d-1}^2+a_d^2/4,
$$
and $a_d/a_{d-1}\le 2/\sqrt{3}$. This proves (\ref{eq:ind0}) and completes the proof of the lemma.
\end{proof}

Using the Iwasawa decomposition, we deduce a convenient formula for the left-invariant measure $m$
on $\hbox{SL}_d(\mathbb{R})$ using the coordinates $n_{ij}$, $i<j$, $b_i=a_{i}/a_{i-1}$, $i=2,\ldots,d$, $k\in K$
with respect to the Iwasawa decomposition.

\begin{lemma}\label{l:measure}
The left-invariant measure $m$ on $\hbox{\rm SL}_d(\mathbb{R})$ is given by
$$
\int_{\hbox{\small \rm SL}_d(\mathbb{R})} f\, dm= \int_{N\times A\times K} f(nak)\, \left(\prod_{i<j}dn_{ij}\right)
\left(\prod_{i=2}^d b_i^{r_i}\,db_i\right) d\nu(k),
$$
where $r_i\in\mathbb{N}$ and $\nu$ is the (finite) right-invariant measure on $K$.  
\end{lemma}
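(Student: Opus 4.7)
The plan is to use the Iwasawa decomposition from Lemma \ref{l:iwasawa} together with Proposition \ref{p:dec} to reduce the computation of the Haar measure on $G=\hbox{SL}_d(\mathbb{R})$ to the Haar measure on the semidirect product $NA$, and then to make the change of variables to the $b_i$-coordinates.

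First, $G=\hbox{SL}_d(\mathbb{R})$ is unimodular: its modular function is a continuous homomorphism to $\mathbb{R}^+$, and since $\hbox{SL}_d(\mathbb{R})$ equals its own commutator subgroup (for $d\ge 2$), this homomorphism is trivial. Taking $S=NA$ and $T=K$ --- both closed, with $NA\cap K=\{I\}$ and $G=NA\cdot K$ by Iwasawa --- Proposition \ref{p:dec} gives
$$
\int_G f\,dm=\int_{NA\times K} f(sk^{-1})\,dm_{NA}(s)\,d\nu(k),
$$
where $\nu$ is Haar on $K$. Since $K$ is compact, $\nu$ is bi-invariant, so we may replace $k^{-1}$ by $k$.

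Next, I compute $m_{NA}$ using the semidirect product structure $NA=N\rtimes A$. The Lebesgue measure $dn=\prod_{i<j}dn_{ij}$ on $N$ is bi-invariant (left translation on the unipotent group $N$ is affine with unipotent linear part). For $a_0\in A$, the identity $a_0na=(a_0na_0^{-1})(a_0a)$ shows that left multiplication by $a_0$ conjugates the $N$-factor, and the map $n\mapsto a_0na_0^{-1}$ has Jacobian $|\det\hbox{Ad}(a_0)|_{\mathfrak{n}}$ on $N$. Requiring left-invariance of $m_{NA}$ under both $N$ and $A$ therefore forces
$$
dm_{NA}(na)=|\det\hbox{Ad}(a)|_{\mathfrak{n}}^{-1}\,dn\,da_A,
$$
where $da_A$ is Haar on $A$.

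Finally, convert to the coordinates $b_i=a_i/a_{i-1}$. Conjugation by $a=\hbox{diag}(a_1,\ldots,a_d)$ scales the $(i,j)$-entry of $\mathfrak{n}$ (for $i<j$) by $a_i/a_j$, so $|\det\hbox{Ad}(a)|_{\mathfrak{n}}=\prod_{i<j}a_i/a_j$. Writing $a_i/a_j=\prod_{k=i+1}^j b_k^{-1}$ and counting, $b_k$ appears with exponent $-(k-1)(d-k+1)$, so
$$
|\det\hbox{Ad}(a)|_{\mathfrak{n}}^{-1}=\prod_{k=2}^d b_k^{(k-1)(d-k+1)}.
$$
The map $(a_1,\ldots,a_{d-1})\mapsto(b_2,\ldots,b_d)$ becomes linear after taking logarithms, so Haar on $A$ in the $b$-coordinates is a constant multiple of $\prod_k db_k/b_k$. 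Combining gives the stated formula with $r_k=(k-1)(d-k+1)-1\in\mathbb{N}$. The main obstacle is correctly identifying the semidirect-product modular factor $|\det\hbox{Ad}(a)|_{\mathfrak{n}}^{-1}$ on $NA$; omitting it would give non-integer exponents and a non-invariant measure. Once this factor is located and combined with Haar on $A$ expressed in $b$-coordinates, the remainder is routine.
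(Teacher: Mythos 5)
Your proof is correct and takes essentially the same route as the paper: the Iwasawa decomposition combined with Proposition \ref{p:dec}, with the left-invariant measure on $NA$ computed explicitly via the conjugation Jacobian $\left|\det \Ad(a)\right|_{\mathfrak n}^{-1}$ and then rewritten in the $b_i$-coordinates; the paper's proof is exactly this argument, stated only as a sketch, and you supply the details (including the unimodularity of $\hbox{SL}_d(\mathbb{R})$, which the paper uses implicitly). One minor remark: your exponent $r_k=(k-1)(d-k+1)-1$ equals $0$ when $d=2$, so the lemma's ``$r_i\in\mathbb{N}$'' should be read as $r_i\ge 0$, which is all that is needed for the finiteness of the Siegel set measure in Lemma \ref{l:mm}.
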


\begin{proof}
By Lemma \ref{l:iwasawa}, $\hbox{\rm SL}_d(\mathbb{R})$ is a product of the groups $NA$ and $K$.
The left-invariant measure on $NA$ can be computed explicitly. Then the lemma follows from
Proposition \ref{p:dec}.
\end{proof}

The following lemma can be checked by a direct computation.

\begin{lemma}\label{l:mm}
$m(\Sigma_{s,t})<\infty$.
\end{lemma}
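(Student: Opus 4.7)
The plan is to apply the explicit formula for the Haar measure provided by Lemma \ref{l:measure} and observe that the Siegel set has a product structure with respect to the Iwasawa coordinates $(n_{ij}, b_i, k)$. This separates the integral into three pieces, each of which can be bounded separately.

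First I would translate the defining conditions of $\Sigma_{s,t}$ into the Iwasawa coordinates: the condition $|n_{ij}| \le s$ already refers to the coordinates $n_{ij}$ on $N$; the condition $a_i/a_{i-1} \le t$ is just $b_i \le t$; and the $K$-component is unrestricted. Since these conditions decouple across $N$, $A$, and $K$, the characteristic function of $\Sigma_{s,t}$ factors as a product, and Lemma \ref{l:measure} gives
$$
m(\Sigma_{s,t}) = \left(\int_{[-s,s]^{d(d-1)/2}} \prod_{i<j} dn_{ij}\right)\left(\prod_{i=2}^d \int_0^t b_i^{r_i}\, db_i\right) \nu(K).
$$

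Next I would check finiteness of each factor. The $n$-integral equals $(2s)^{d(d-1)/2}$, which is finite. The measure $\nu(K)$ on $K=\hbox{SO}_d(\mathbb{R})$ is finite because $K$ is compact (and $\nu$ is locally finite). For each $b_i$-integral, since $r_i \in \mathbb{N}$ (so in particular $r_i \ge 0$), we have
$$
\int_0^t b_i^{r_i}\, db_i = \frac{t^{r_i+1}}{r_i+1} < \infty.
$$
Multiplying these three finite quantities yields $m(\Sigma_{s,t}) < \infty$.

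The only potentially delicate point is convergence of the $b_i$-integral near $b_i = 0$: the Siegel set imposes only an upper bound $b_i \le t$ and no lower bound, so we are genuinely integrating up to the boundary $b_i = 0$. This is where the nonnegativity of the exponents $r_i$ arising from the modular function of $NA$ (as asserted in Lemma \ref{l:measure}) is essential—if the exponents were negative, the integral would diverge. Once this observation is in place, the argument reduces to routine evaluation of elementary integrals, so this is less an obstacle than a sanity check that the hypothesis $r_i \in \mathbb{N}$ in Lemma \ref{l:measure} is doing real work.
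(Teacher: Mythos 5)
Your proposal is correct and is exactly the ``direct computation'' the paper has in mind: it invokes the Haar-measure formula of Lemma \ref{l:measure}, notes that the conditions defining $\Sigma_{s,t}$ decouple in the Iwasawa coordinates, and checks finiteness factor by factor, with the only delicate point (convergence of $\int_0^t b_i^{r_i}\,db_i$ near $b_i=0$, guaranteed by $r_i\ge 0$) correctly identified. The paper omits this computation, and your write-up supplies it along the intended route.
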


Now Theorem \ref{th:sl} follows from Lemmas \ref{l:cover} and \ref{l:mm}.

\vspace{0.5cm}

The space $\mathcal{L}_d\simeq \hbox{SL}_d(\mathbb{Z})\backslash\hbox{SL}_d(\mathbb{R})$ is equipped with
the factor topology defined by the map
$\hbox{SL}_d(\mathbb{R})\to\hbox{SL}_d(\mathbb{Z})\backslash\hbox{SL}_d(\mathbb{R})$. 
A sequence of lattices $L^{(n)}$ converges to $L$ if there exist bases $\{v_i^{(n)}\}$
of $L^{(n)}$ that converge to a basis of $L$. We observe that the space $\mathcal{L}_d$
is not compact. Indeed, it is clear that the sequence of lattices $\mathbb{Z}(\frac{1}{n}e_1)+\mathbb{Z}(n
e_2)+\mathbb{Z} e_3+\cdots+\mathbb{Z} e_d$ has no convergence subsequences.
The following theorem provides a convenient compactness criterion.

\begin{theorem}[Mahler compactness criterion]\label{th:mahler}
A subset $\Omega\subset \mathcal{L}_d$ is relatively compact if and only if there exists $\delta>0$
such that 
\begin{equation}\label{eq:compact}
\|v\|\ge \delta\quad\hbox{for every $L\in\Omega$ and $v\in L\backslash \{0\}$}.
\end{equation}
\end{theorem}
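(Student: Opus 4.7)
The plan is to exploit the identification $\mathcal{L}_d \simeq \hbox{SL}_d(\mathbb{Z})\backslash \hbox{SL}_d(\mathbb{R})$, together with the reduction theory (Lemma \ref{l:cover}) that expresses every coset via an element of a Siegel set. The quantity $\min_{v \in L\setminus\{0\}}\|v\|$ is naturally read off from the $A$-component in the Iwasawa decomposition of a reduced representative, so both directions of the criterion become geometric statements about Siegel sets.

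For the easy direction I would argue by contrapositive: assume $\Omega$ is relatively compact and suppose there exist $L_n\in\Omega$ with nonzero $v_n\in L_n$ satisfying $\|v_n\|\to 0$. Writing $L_n=\mathbb{Z}^d g_n$ and passing to a subsequence (after adjusting by elements of $\hbox{SL}_d(\mathbb{Z})$) we may assume $g_n\to g$ in $\hbox{SL}_d(\mathbb{R})$, so that $g_n^{-1}$ is uniformly bounded in operator norm. For each $n$ we have $v_n=m_n g_n$ with $m_n\in\mathbb{Z}^d\setminus\{0\}$, whence $m_n=v_n g_n^{-1}\to 0$. But $\|m_n\|\geq 1$ for every nonzero integer vector, a contradiction.

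For the nontrivial direction, fix $s=\tfrac12$, $t=\tfrac{2}{\sqrt 3}$ and for each $L\in\Omega$ use Lemma \ref{l:cover} to choose $g_L\in\Sigma_{s,t}$ with $L=\mathbb{Z}^d g_L$; write $g_L=n_L a_L k_L$ with $a_L=\hbox{diag}(a_1,\ldots,a_d)$. Since $n_L$ is unipotent upper triangular, $e_d n_L=e_d$, and since $k_L\in\hbox{SO}_d(\mathbb{R})$ is an isometry, $\|e_d g_L\|=a_d$. As $e_d g_L\in L\setminus\{0\}$, the hypothesis (\ref{eq:compact}) forces $a_d\geq\delta$. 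Combining the Siegel constraint $a_i/a_{i-1}\leq t$ (which gives $a_i\geq(\sqrt 3/2)^{d-i}a_d\geq (\sqrt 3/2)^{d-i}\delta$) with $\prod_i a_i=1$ yields matching upper bounds $a_i\leq C(d,\delta)$; together with $|n_{ij}|\leq\tfrac12$ and compactness of $K$, this confines $g_L$ to a fixed compact subset of $\hbox{SL}_d(\mathbb{R})$, whose image under the quotient map contains $\Omega$.

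The only delicate step is the last one: translating $a_d\geq\delta$ plus the Siegel ratio bound into two-sided uniform control on every $a_i$ via the unimodularity constraint $\prod a_i=1$. Once this bookkeeping is done, the rest reduces to unpacking definitions. The forward direction is essentially the observation that lattice points in a limit are limits of lattice points with integer coordinates; the real content of the theorem is the reduction-theoretic bound that turns a geometric non-degeneracy condition on the lattice into compactness of its coset representative in the group.
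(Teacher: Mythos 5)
Your proposal is correct and follows essentially the same route as the paper: Lemma \ref{l:cover} to place representatives in a Siegel set, the identity $\|e_d g\|=a_d\ge\delta$, the ratio bounds $a_i\ge t^{-(d-i)}a_d$ combined with $\prod_i a_i=1$ to bound all $a_i$ from above, and compactness of the $N$- and $K$-parts to confine the representatives to a compact subset of $\hbox{SL}_d(\mathbb{R})$. The converse, which the paper dismisses as obvious, you spell out correctly via boundedness of $g_n^{-1}$ and the bound $\|m\|\ge 1$ for nonzero integer vectors.
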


\begin{proof}
Suppose that (\ref{eq:compact}) holds.
It follows from Lemma \ref{l:cover} that $\Omega=\mathbb{Z}^d\Sigma$ for some $\Sigma\subset
\Sigma_{s,t}$. For every $g=nak\in \Sigma$, we have $|n_{ij}|\le s$ and $a_i\le t a_{i-1}$.
It follows from (\ref{eq:compact}) that
$$
\|e_dg\|=\|e_da\|=a_d\ge \delta.
$$
Hence,
\begin{equation}\label{eq:ii}
a_i\ge t^{-1} a_{i+1}\ge \ldots\ge t^{-(d-i)} a_d\ge t^{i-d}\delta.
\end{equation}
Since $a_1\cdots a_d=1$, (\ref{eq:ii}) implies that all $a_i$'s are also bounded from above.
This proves that $\Sigma$ is a bounded subset of $\hbox{SL}_d(\mathbb{R})$, so that
$\Omega=\mathbb{Z}^d\Sigma$ is relatively compact. 

The converse statement is obvious.
\end{proof}

\vspace{0.5cm}

More generally, we consider $G(\mathbb{Z})\subset G(\mathbb{R})$ where $G$ is an algebraic
group defined over $\mathbb{Q}$. In many cases, $G(\mathbb{Z})$ is a lattice in
$G(\mathbb{R})$. In fact the following general criterion holds (see \cite{borel_a,pr}):

\begin{theorem}\label{th:gen}
Let $G$ be a connected algebraic group defined over $\mathbb{Q}$.
Then $G(\mathbb{Z})$ is a lattice in $G(\mathbb{R})$ if and only 
there are no nontrivial polynomial homomorphisms $G\to \mathbb{C}^\times$ defined over $\mathbb{Q}$.
\end{theorem}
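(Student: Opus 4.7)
The plan is to prove the two implications separately, with the forward direction being a direct integrality-plus-factorization argument, and the converse requiring the full machinery of reduction theory that would parallel Section~6.

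\emph{Necessity ($\Rightarrow$).} Suppose there exists a nontrivial polynomial homomorphism $\chi:G\to\mathbb{C}^\times$ defined over $\mathbb{Q}$. I would first argue that $\chi(G(\mathbb{Z}))$ is a finite subgroup of $\mathbb{C}^\times$. The idea: after raising to a suitable integer power $n$, the character $\chi^n$ can be written as $P/\det^k$ with $P$ a polynomial in the matrix entries having integer coefficients, so since $\det(g)=\pm 1$ for $g\in G(\mathbb{Z})\subset \hbox{GL}_d(\mathbb{Z})$, we have $\chi^n(g)\in\mathbb{Z}$. Applying the same observation to $\chi^{-n}(g)=\chi^n(g^{-1})$ forces $\chi^n(g)\in\mathbb{Z}^\times=\{\pm 1\}$, so $\chi(G(\mathbb{Z}))$ consists of $2n$-th roots of unity and is finite. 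In particular, $|\chi|$ is trivial on $G(\mathbb{Z})$, while $|\chi|:G(\mathbb{R})^0\to\mathbb{R}_{>0}$ is surjective (its image is a connected nontrivial subgroup of $\mathbb{R}_{>0}$). Setting $G^1=\ker|\chi|$, a Fubini-type factorization of the Haar measure analogous to Proposition~\ref{p:dec} along the exact sequence $1\to G^1\to G(\mathbb{R})\to\mathbb{R}_{>0}\to 1$ yields
\begin{equation*}
\mu(G(\mathbb{Z})\backslash G(\mathbb{R}))=\mu_1(G(\mathbb{Z})\backslash G^1)\cdot\int_{\mathbb{R}_{>0}}\frac{dt}{t}=\infty,
\end{equation*}
so $G(\mathbb{Z})$ is not a lattice.

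\emph{Sufficiency ($\Leftarrow$).} This is the deep direction, and my plan would mirror the proof of Theorem~\ref{th:sl}. The steps would be: (i) establish a generalized Iwasawa decomposition $G(\mathbb{R})=NAK$ where $N$ is the unipotent radical of a minimal $\mathbb{Q}$-parabolic, $A$ a maximal $\mathbb{Q}$-split torus, and $K$ a maximal compact subgroup; (ii) define Siegel sets $\Sigma_{s,t}\subset G(\mathbb{R})$ by bounding the $N$-coordinates and the values of positive simple roots on $A$; (iii) prove the reduction theorem $G(\mathbb{R})=G(\mathbb{Z})\Sigma_{s,t}$ for suitable parameters, generalizing the reduced-basis argument of Lemma~\ref{l:cover}; and (iv) compute the Haar measure of $\Sigma_{s,t}$ using a product-decomposition formula as in Lemma~\ref{l:measure}.

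The main obstacle lies in step (iv). The Haar integrand on $A$ is a product of factors of the form $b_i^{r_i}\,db_i$, and finiteness of the Siegel volume depends on the interplay between these weight exponents and the directions in $A$ left uncontrolled by the positive simple roots. The no-character hypothesis is precisely what guarantees the required tameness: nontrivial $\mathbb{Q}$-characters of $G$ correspond to directions in the $\mathbb{Q}$-split part of the center of $G$ that are orthogonal to all roots, and along which the Haar measure on $A$ is not damped by the root factors so escapes to infinity unconstrained. Ruling them out forces the central $\mathbb{Q}$-split torus to be trivial, which is exactly the condition under which the Siegel bounds combine with the weight exponents to produce a convergent integral. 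Making this rigorous requires the structure theory of reductive algebraic $\mathbb{Q}$-groups --- relative root systems, rational parabolic subgroups, and arithmetic Bruhat-type estimates on the $N$-component --- which is why the author defers the full argument to \cite{borel_a, pr}.
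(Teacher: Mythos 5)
You should note at the outset that the paper itself does not prove Theorem \ref{th:gen}: it states the criterion, remarks that it could be obtained by generalising the Siegel-set construction of the preceding pages, and explicitly defers the proof to \cite{borel_a,pr}, proving instead the related Theorem \ref{th:orbit}. So your proposal has to be judged on its own merits rather than against an in-paper argument. Your necessity direction is the standard argument and is essentially sound, with one technical slip: raising $\chi$ to a power does not clear the denominators of its coefficients (if $\chi=P/\det^k$ with $P$ rational, then $\chi^n=P^n/\det^{nk}$ still has rational coefficients). The correct step is to pick an integer $m$ so that $m\chi$ has integral numerator; then $\chi(\gamma)\in\frac{1}{m}\mathbb{Z}$ and $\chi(\gamma)^{-1}=\chi(\gamma^{-1})\in\frac{1}{m}\mathbb{Z}$ for all $\gamma\in G(\mathbb{Z})$, which already forces $\chi(G(\mathbb{Z}))$ to be a finite subgroup of $\mathbb{Q}^\times$, hence contained in $\{\pm 1\}$. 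You also need surjectivity of $|\chi|:G(\mathbb{R})\to\mathbb{R}_{>0}$, which follows from openness of the image (in the spirit of Theorem \ref{th:orbit0}), and a Weil-type quotient integration formula for the fibration over $\mathbb{R}_{>0}$ in place of Proposition \ref{p:dec}, which as stated only covers products $G=ST$ with $S\cap T=1$; these are fixable details.

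The genuine gap is the sufficiency direction, which is the entire content of the theorem. Your steps (i)--(iv) name the required ingredients --- relative Iwasawa decomposition, Siegel sets for a minimal $\mathbb{Q}$-parabolic, the reduction theorem $G(\mathbb{R})=G(\mathbb{Z})\Sigma_{s,t}$, and the finiteness of the Siegel volume --- but none of them is carried out, and each is a substantial theorem; in particular the analogue of Lemma \ref{l:cover} has no straightforward generalisation of the reduced-basis argument and is the hardest point of reduction theory, while the volume computation requires the analogue of Lemma \ref{l:measure} together with the weight/root bookkeeping you only gesture at. Moreover, your heuristic for where the no-character hypothesis enters is phrased for reductive groups, whereas the theorem concerns an arbitrary connected $G$: one must first pass through a Levi decomposition, using that characters are trivial on the unipotent radical and that arithmetic quotients of unipotent groups are automatically compact, before the ``central $\mathbb{Q}$-split torus'' argument makes sense. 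As written, the proposal proves one implication modulo repairs and, for the other, reproduces the paper's own deferral to \cite{borel_a,pr} rather than supplying a proof.
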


Theorem \ref{th:gen} can be proven by generalising the construction of Siegel sets given above,
but for this one needs to develop more of structure theory of algebraic groups,
and we are not going to give a proof of this theorem here. Instead we prove a related 
result which also sheds some light into the structure of the space $\hbox{SL}_d(\mathbb{Z})\backslash \hbox{SL}_d(\mathbb{R})$. 

\begin{theorem}\label{th:orbit}
Let $G\subset \hbox{\rm SL}_d(\mathbb{C})$ be an algebraic group defined over $\mathbb{Q}$
which doesn't have any nontrivial polynomial homomorphisms $G\to \mathbb{C}^\times$ defined over
$\mathbb{Q}$.
Then the image of the map 
$$
\iota: G(\mathbb{Z})\backslash G(\mathbb{R})\to \mathcal{L}_d: g\mapsto \mathbb{Z}^d g
$$
is closed, and the map $\iota$ defines a homeomorphism
$G(\mathbb{Z})\backslash G(\mathbb{R})\simeq \hbox{\rm Im}(\iota)$.
\end{theorem}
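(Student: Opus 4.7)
The plan is to prove the theorem in three stages: continuity and injectivity of $\iota$, closedness of its image in $\mathcal{L}_d$, and openness of $\iota$ onto that image. The first and third are rather formal; the real content lies in closedness. For continuity and injectivity, continuity is immediate since $g\mapsto\mathbb{Z}^d g$ is continuous as a map $\hbox{SL}_d(\mathbb{R})\to \mathcal{L}_d\simeq \hbox{SL}_d(\mathbb{Z})\backslash \hbox{SL}_d(\mathbb{R})$ and $G(\mathbb{Z})\subset \hbox{SL}_d(\mathbb{Z})$ stabilises $\mathbb{Z}^d$. For injectivity, if $\mathbb{Z}^d g=\mathbb{Z}^d g'$ with $g,g'\in G(\mathbb{R})$, then $g'g^{-1}$ preserves $\mathbb{Z}^d$ and hence lies in $\hbox{SL}_d(\mathbb{Z})\cap G(\mathbb{R})=G(\mathbb{Z})$.

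For closedness, I would combine Theorem \ref{th:gen} with reduction theory and Mahler's criterion. The no-characters hypothesis guarantees via Theorem \ref{th:gen} that $G(\mathbb{Z})$ is a lattice in $G(\mathbb{R})$, which is the input needed to construct a Siegel-type fundamental set $\Sigma_G\subset G(\mathbb{R})$ for $G(\mathbb{Z})$ by generalising Lemma \ref{l:cover} from $\hbox{SL}_d$ to $G$. Now suppose $L_n=\mathbb{Z}^d g_n\to L$ in $\mathcal{L}_d$ with $g_n\in G(\mathbb{R})$. Replacing $g_n$ by $G(\mathbb{Z})$-translates (which leave $\mathbb{Z}^d g_n$ unchanged), we may arrange $g_n\in\Sigma_G$. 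By Theorem \ref{th:mahler} there exists $\delta>0$ with $\|vg_n\|\ge \delta$ for every $v\in\mathbb{Z}^d\setminus\{0\}$ and every $n$. The crux of the argument is then that
$$
\Sigma_G^{(\delta)}=\{g\in\Sigma_G:\|vg\|\ge\delta\text{ for all }v\in\mathbb{Z}^d\setminus\{0\}\}
$$
has compact closure in $G(\mathbb{R})$; granting this, a subsequence $g_{n_k}\to g_\infty\in G(\mathbb{R})$ yields $L=\mathbb{Z}^d g_\infty\in \hbox{Im}(\iota)$.

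Compactness of $\Sigma_G^{(\delta)}$ is the main obstacle. The underlying idea is that escape to infinity inside $\Sigma_G$ is controlled by a finite family of ``character-like'' functions on $G$; the absence of nontrivial $\mathbb{Q}$-rational characters forces such escape to contract some $\mathbb{Q}$-rational subspace of $\mathbb{R}^d$ under $g_n$, producing arbitrarily short nonzero integer vectors in $\mathbb{Z}^d g_n$ and contradicting the uniform Mahler lower bound $\delta$. Making this precise requires reduction theory for general $\mathbb{Q}$-algebraic groups, extending Lemmas \ref{l:cover}--\ref{l:mm} beyond $\hbox{SL}_d$, and is a substantial input which must be imported. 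Once closedness is in hand, observe that $\iota$ is $G(\mathbb{R})$-equivariant for the right actions on both sides, and its image is the single $G(\mathbb{R})$-orbit of $\mathbb{Z}^d$. The standard theorem on continuous actions of $\sigma$-compact locally compact groups then implies that a continuous bijection from $G(\mathbb{Z})\backslash G(\mathbb{R})$ onto a locally closed $G(\mathbb{R})$-orbit is automatically open, and therefore $\iota$ is a homeomorphism onto $\hbox{Im}(\iota)$.
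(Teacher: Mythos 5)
Your outline of continuity, injectivity, and the final open-mapping step is fine, but the heart of the matter --- closedness of $\hbox{Im}(\iota)$ --- is not actually proved: it is deferred to the existence of a Siegel-type fundamental set $\Sigma_G$ for $G(\mathbb{Z})$ in $G(\mathbb{R})$ together with the compactness of $\Sigma_G^{(\delta)}$, and neither is available. Note first that the implication you invoke runs backwards: Theorem \ref{th:gen} only tells you that $G(\mathbb{Z})$ is a lattice, which yields a measurable fundamental set with no geometric structure whatsoever; it does not produce a Siegel set, and the compactness of $\Sigma_G^{(\delta)}$ cannot be extracted from the lattice property alone. The precise statement you need (Siegel sets for a general $\mathbb{Q}$-group without rational characters, and the fact that the part of a Siegel set on which the systole of $\mathbb{Z}^d g$ is bounded below is relatively compact) is exactly the content of Borel--Harish-Chandra reduction theory, i.e.\ the machinery the paper explicitly declines to develop --- indeed Theorem \ref{th:gen} itself is stated there without proof for this very reason, and Theorem \ref{th:orbit} is offered as a substitute that avoids it. So as written your argument reduces the theorem to imported results that are at least as deep, and the ``character-like functions force contraction of a rational subspace'' sketch is precisely the step that reduction theory exists to justify; it cannot be waved through.

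For comparison, the paper's proof is short and self-contained and uses the no-characters hypothesis in a completely different way. By Lemma \ref{l:1} (a Chevalley-type construction, where the absence of $\mathbb{Q}$-characters upgrades a line stabiliser to a vector stabiliser) one finds a $\mathbb{Q}$-rational representation $\rho$ of $\hbox{GL}_d$ and a rational vector $v$ with $G=\hbox{Stab}(v)$; by Lemma \ref{l:cong} the orbit $v\cdot\rho(\hbox{SL}_d(\mathbb{Z}))$ lies in a lattice of $\mathbb{Q}^N$ and is therefore discrete. If $\gamma_n g_n\to g$ with $\gamma_n\in\hbox{SL}_d(\mathbb{Z})$ and $g_n\in G(\mathbb{R})$, then $v\cdot\rho(\gamma_n)^{-1}=v\cdot\rho(\gamma_n g_n)^{-1}\to v\cdot\rho(g)^{-1}$, so by discreteness $v\cdot\rho(\gamma_n)^{-1}$ is eventually constant; hence $\gamma_n=\gamma_{n_0}\delta_n$ with $\delta_n\in G(\mathbb{Z})$, and $\delta_n g_n$ converges in $G(\mathbb{R})$ to an element $g'$ with $\mathbb{Z}^dg'=\mathbb{Z}^dg$. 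This single computation gives both the closedness of the image and the convergence $G(\mathbb{Z})g_n\to G(\mathbb{Z})g'$, i.e.\ the homeomorphism, with no appeal to Siegel sets, Mahler's criterion, or Theorem \ref{th:gen}. If you want to salvage your route you would have to prove the reduction-theoretic compactness statement for general $G$; within the scope of this paper the Chevalley--discreteness argument is the intended (and essentially unavoidable) replacement.
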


For the proof, we need two lemmas.

\begin{lemma}\label{l:1}
For $G$ as in Theorem \ref{th:orbit}, there exist a polynomial homomorphism $\rho: \hbox{\rm
  GL}_d(\mathbb{C})\to \hbox{\rm GL}_N(\mathbb{C})$ defined over $\mathbb{Q}$ and $v\in\mathbb{Q}^N$
such that 
$$
G=\hbox{\rm Stab}_{\hbox{\rm\tiny GL}_d(\mathbb{C})}(v).
$$
\end{lemma}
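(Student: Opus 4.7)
The plan is to execute Chevalley's classical construction, which realises any closed subgroup of $\mathrm{GL}_d(\mathbb{C})$ as the stabiliser of a line in some polynomial representation, and then to use the hypothesis on rational characters of $G$ to promote that line to a fixed vector.

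First I would consider the right-translation action of $\mathrm{GL}_d(\mathbb{C})$ on its coordinate ring $A = \mathbb{C}[x_{11},\ldots,x_{dd},\det^{-1}]$ given by $(h\cdot f)(x) = f(xh)$. Expanding $f(xh)$ as a polynomial in the entries of $h$ with coefficients in $A$ shows that every $f \in A$ lies in a finite-dimensional $\mathrm{GL}_d(\mathbb{C})$-invariant subspace, which can be chosen defined over $\mathbb{Q}$ whenever $f$ is. The Hilbert basis theorem furnishes finitely many generators $f_1,\ldots,f_k$ of $\mathcal{I}(G)$ in $\mathbb{Q}[x_{ij},\det^{-1}]$, and I would take $W$ to be a finite-dimensional $\mathrm{GL}_d(\mathbb{C})$-invariant $\mathbb{Q}$-subspace of $A$ containing all of them. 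Setting $U = W \cap \mathcal{I}(G)$, the key intermediate claim is $G = \{h \in \mathrm{GL}_d(\mathbb{C}) : h\cdot U = U\}$. The containment $\subseteq$ is immediate, since right translation by $h \in G$ preserves both $W$ and $\mathcal{I}(G)$. For the reverse direction, $h\cdot U = U$ gives $h\cdot f_i \in \mathcal{I}(G)$ for each $i$, which is the statement $f_i(gh) = 0$ for every $g \in G$; since the $f_i$ generate $\mathcal{I}(G)$, this forces $gh \in \mathcal{V}(\mathcal{I}(G)) = G$ for all $g \in G$, and specialising to $g = I$ yields $h \in G$.

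Next I would pass to top exterior powers: with $r = \dim U$, the induced polynomial representation $\rho : \mathrm{GL}_d(\mathbb{C}) \to \mathrm{GL}(\Lambda^r W)$ is defined over $\mathbb{Q}$, and $L = \Lambda^r U$ is a $\mathbb{Q}$-rational line in $\Lambda^r W$ whose stabiliser in $\mathrm{GL}_d(\mathbb{C})$ equals $\{h : hU = U\} = G$. Choose any nonzero $v \in L \cap \mathbb{Q}^N$. The restriction of $\rho$ to $L$ defines a polynomial character $\chi : G \to \mathbb{C}^\times$ defined over $\mathbb{Q}$, and the hypothesis of Theorem \ref{th:orbit} forces $\chi$ to be trivial. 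Hence $G$ fixes $v$ pointwise; the reverse inclusion $\mathrm{Stab}(v) \subseteq \mathrm{Stab}(L) = G$ is automatic, so $G = \mathrm{Stab}_{\mathrm{GL}_d(\mathbb{C})}(v)$ with $\rho$ and $v$ as required.

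The main obstacle will be the rationality bookkeeping: one must verify at each step that $W$, $U$, $\Lambda^r U$, and $v$ can genuinely be chosen over $\mathbb{Q}$, so that the final character $\chi$ is $\mathbb{Q}$-rational and the hypothesis applies. The remaining ingredients — local finiteness of the regular representation and the passage from subspace-stabiliser to line-stabiliser via top exterior powers — are standard and should not present real difficulty.
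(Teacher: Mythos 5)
Your proposal is correct and is essentially the paper's own argument (Chevalley's construction: a finite-dimensional right-translation-invariant space of functions whose intersection with $\mathcal{I}(G)$ cuts out $G$, passage to the top exterior power to realise $G$ as a line stabiliser, and triviality of the resulting $\mathbb{Q}$-rational character to upgrade the line to a fixed vector). The only cosmetic difference is that the paper takes the invariant space to be $V_m$, the polynomials of degree at most $m$ in the matrix entries, which is invariant for free and stays inside the polynomial ring, so it sidesteps both the local-finiteness step and the ring $\mathbb{C}[x_{ij},\det^{-1}]$ (which, if used carelessly, would make $\rho$ only a regular rather than polynomial homomorphism).
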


\begin{proof}
Let
\begin{align*}
V_m&=\{P\in \mathbb{C}[x_{11},\ldots,x_{dd}]:\, \deg(P)\le m\},\\
W_m&=V_m\cap \mathcal{I}(G)=\{P\in V_m:\, P|_G=0\}.
\end{align*}
By the Hilbert Basis Theorem \cite[Th.~7.5]{am}, the ideal $\mathcal{I}(G)$ is finitely generated.
Hence, for sufficiently large $m$, it is generated by $W_m$, and we fix such $m$.
We consider the representation
$$
\sigma:\hbox{\rm GL}_d(\mathbb{C})\to \hbox{\rm GL}(V_m):\sigma(g):P\mapsto P(X\cdot g).
$$
We claim that
\begin{equation}\label{eq:ccc}
g\in G\quad \Longleftrightarrow \quad \sigma(g)(W_m)\subset W_m.
\end{equation}
Indeed, if $g\in G$, then for every $P\in W_m$, 
$$
\sigma(g)(P)(G)=P(G\cdot g)=P(G)=\{0\},
$$ 
so that $\sigma(g)(W_m)\subset W_m$. 
Conversely, if $\sigma(g)(W_m)\subset W_m$, then for every 
$P\in W_m$, 
$$
P(x\cdot g)\in \mathcal{I}(G)\quad\hbox{and}\quad P(g)=P(I\cdot g)=0.
$$
Since $W_m$ generates $\mathcal{I}(G)$, it follows that $g\in G$, as required.

Now we consider the wedge-product representation
$$
\rho=\wedge^{\dim(W_m)} \sigma: \hbox{GL}_d(\mathbb{C})\to \hbox{GL}(\wedge^{\dim(W_m)} V_m)
$$
and take a nonzero rational $v\in \wedge^{\dim(W_m)} W_m$. It follows from the properties
of the wedge-products that
\begin{equation}\label{eq:ccc2}
\sigma(g)(W_m)\subset W_m \quad \Longleftrightarrow \quad \rho(g)(v)\in \mathbb{C}v.
\end{equation}
Combining (\ref{eq:ccc}) and (\ref{eq:ccc2}), we deduce that $G$ is precisely
the stabiliser of the line $\mathbb{C}v$. Then we obtain a polynomial homomorphism
$\chi:G\to \mathbb{C}^\times$ defined over $\mathbb{Q}$. According to our assumption on $G$,
$\chi$ must be trivial, and this implies the lemma.
\end{proof} 

\begin{lemma}\label{l:cong}
Let $G$ be an algebraic group defined over $\mathbb{Q}$ and $\rho:G\to\hbox{\rm GL}_N(\mathbb{C})$
a polynomial represenation defined over $\mathbb{Q}$. Then $\rho(G(\mathbb{Z}))$ preserves a lattice $L$
contained in $\mathbb{Q}^N$.
\end{lemma}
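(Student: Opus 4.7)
The plan is to build $L$ as the $\mathbb{Z}$-span of $\rho(G(\mathbb{Z}))\cdot \mathbb{Z}^N$ inside $\mathbb{Q}^N$ and then control the denominators to show it is indeed a lattice.

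First I would unpack the definition: a polynomial representation $\rho: G \to \hbox{GL}_N(\mathbb{C})$ defined over $\mathbb{Q}$ means that each matrix entry $\rho(g)_{ij}$, viewed as a function of $g \in G \subset \hbox{GL}_d(\mathbb{C})$, is the restriction to $G$ of some polynomial $P_{ij} \in \mathbb{Q}[x_{11},\ldots,x_{dd}]$. Let $D \in \mathbb{Z}_{>0}$ be a common denominator for the rational coefficients appearing in the finitely many polynomials $P_{11},\ldots,P_{NN}$. Since every $g \in G(\mathbb{Z})$ has integer entries, evaluating $P_{ij}$ at $g$ produces a rational number whose denominator divides $D$. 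Consequently
$$
\rho(G(\mathbb{Z})) \subset \tfrac{1}{D}\hbox{M}_N(\mathbb{Z}).
$$

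Next I would set $L := \sum_{g \in G(\mathbb{Z})} \rho(g)\mathbb{Z}^N \subset \mathbb{Q}^N$. By construction this is the smallest subgroup of $\mathbb{Q}^N$ that contains $\mathbb{Z}^N$ and is stable under $\rho(G(\mathbb{Z}))$. Invariance is immediate because $G(\mathbb{Z})$ is a group: for any $h \in G(\mathbb{Z})$ the map $g \mapsto hg$ is a bijection of $G(\mathbb{Z})$, so $\rho(h)L = \sum_g \rho(hg)\mathbb{Z}^N = L$. From the denominator bound above we get $L \subset \tfrac{1}{D}\mathbb{Z}^N$, while taking $g=I$ yields $\mathbb{Z}^N \subset L$. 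Being sandwiched between two free $\mathbb{Z}$-modules of rank $N$, $L$ is itself a free $\mathbb{Z}$-module of rank $N$, hence a lattice in $\mathbb{Q}^N$.

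There is no genuinely hard step here; the argument is entirely formal once one has the denominator bound. The only point that deserves care is that $G(\mathbb{Z})$ really is a group (closed under inverses), which in the setting of Theorem \ref{th:orbit} is automatic because $G \subset \hbox{SL}_d(\mathbb{C})$, so the inverse of an integer matrix in $G$ has integer entries. Everything else reduces to the rationality of the coefficients defining $\rho$.
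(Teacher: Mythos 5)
Your argument is correct, and it takes a genuinely different (and shorter) route than the paper. You bound denominators globally: since each entry of $\rho$ is given by a polynomial with rational coefficients and every $\gamma\in G(\mathbb{Z})$ has integer entries, you get $\rho(G(\mathbb{Z}))\subset \frac{1}{D}\hbox{M}_N(\mathbb{Z})$ for a single integer $D$, so the $\rho(G(\mathbb{Z}))$-span of $\mathbb{Z}^N$ is sandwiched between $\mathbb{Z}^N$ and $\frac{1}{D}\mathbb{Z}^N$ and is therefore a lattice; invariance follows formally from the group property of $G(\mathbb{Z})$ and the homomorphism property of $\rho$. The paper instead introduces the congruence subgroups $\Gamma(m)=\{\gamma\in G(\mathbb{Z}):\gamma\equiv I \bmod m\}$, writes $\rho(I+X)=I+P(X)$ with $P$ rational and $P(0)=0$, chooses $m$ to kill the denominators of $P$ so that $\rho(\Gamma(m))$ has integer entries and hence preserves $\mathbb{Z}^N$ exactly, and then uses the finiteness of the index $|G(\mathbb{Z}):\Gamma(m)|$ to conclude that the span $\left<\mathbb{Z}^N\rho(G(\mathbb{Z}))\right>$ is a finite sum of lattices and hence a lattice. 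The two constructions produce essentially the same $L$; what your version buys is economy (no congruence subgroups, no finite-index argument), since you observe that bounded denominators, rather than exact integrality on a subgroup, already suffice for the span to be discrete. What the paper's version buys is exact integrality of $\rho$ on a finite-index subgroup and an introduction to congruence subgroups, which are a standard tool in the theory of arithmetic groups, but for the statement of the lemma itself your direct bound is enough. The one point you rightly flag --- that $G(\mathbb{Z})$ is closed under inversion, so that all elements (including inverses) have integer entries and the orbit-sum is genuinely $\rho(G(\mathbb{Z}))$-stable --- is the only hypothesis on the integral points your argument uses, and it holds with the paper's conventions.
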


\begin{proof}
We introduce the family of \emph{congruence subgroups} of $G(\mathbb{Z})$:
$$
\Gamma(m)=\{\gamma\in G(\mathbb{Z}):\, \gamma=I\mod m\}.
$$
It is clear that $\Gamma(m)$ is a finite-index normal subgroup of $G(\mathbb{Z})$.
We may write 
\begin{equation}\label{eq:coc}
\rho(I+X)=I+P(X),
\end{equation}
where $P$ is a polynomial map with rational coefficients such that $P(0)=0$.
We take an integer $m$ which is divisible by all denominators of the coefficients of $P$.
Then it follows from (\ref{eq:coc}) that $\rho(\Gamma(m))\subset \hbox{M}_N(\mathbb{Z})$
and $\rho(\Gamma(m))$ preserves $\mathbb{Z}^N$. Hence, $\rho(G(\mathbb{Z}))$ preserves
$L=\left<\mathbb{Z}^N\rho(G(\mathbb{Z}))\right>$. Since $|G(\mathbb{Z}):\Gamma(m)|<\infty$,
it is clear that $L$ is a lattice.
\end{proof} 

\begin{proof}[Proof of Theorem \ref{th:orbit}]
Consider a sequence of lattices $L_n=\mathbb{Z}^dg_n$ with $g_n\in G(\mathbb{R})$.
Suppose that $L_n\to L=\mathbb{Z}^dg$ with $g\in \hbox{SL}_d(\mathbb{R})$.
This means that $\gamma_ng_n\to g$ in $\hbox{SL}_d(\mathbb{R})$ for some sequence
$\gamma_n\in\hbox{SL}_d(\mathbb{Z})$. We take the representation $\rho$ and the rational vector $v$
constructed in Lemma \ref{l:1}. Then according to Lemma~\ref{l:cong}, 
$\rho(\hbox{\rm SL}_d(\mathbb{Z}))$ stabilises a lattice contained in $\mathbb{Q}^N$.
Since a multiple of $v$ is contained in this lattice, we conclude that 
the orbit $v\cdot \rho(\hbox{SL}_d(\mathbb{Z}))$ is discrete in $\mathbb{R}^N$.
Since
$$
v\cdot \rho(\gamma_n g_n)^{-1}=v\cdot \rho(\gamma_n)^{-1}\to v\cdot \rho(g)^{-1},
$$
it follows from discreteness that $v\cdot \rho(\gamma_n)^{-1}= v\cdot \rho(g)^{-1}$
for sufficiently large $n$. In particular, $v\cdot \rho(\gamma_n)^{-1}=v\cdot \rho(\gamma_{n_0})^{-1}$
and $\gamma_n=\gamma_{n_0}\delta_n$ for some $\delta_n\in \hbox{SL}_d(\mathbb{Z})\cap \hbox{Stab}(v)=G(\mathbb{Z})$.
Then $\delta_ng_n\to g'=\gamma_{n_0}^{-1}g$. Clearly, $g'\in G(\mathbb{R})$ and
$\mathbb{Z}^dg'=\mathbb{Z}^dg$.
This shows that $\hbox{Im}(\iota)$ is closed, and $G(\mathbb{Z})g_n\to G(\mathbb{Z})g'$ in
$G(\mathbb{Z})\backslash G(\mathbb{R})$, so that $\iota$ is a homeomorphism.
\end{proof}

Theorem can be used to construct examples of compact homogeneous spaces $G(\mathbb{Z})\backslash
G(\mathbb{R})$:

\begin{corollary}\label{c:cccc}
Let $G\subset \hbox{\rm GL}_d(\mathbb{C})$ be an algebraic group as in Theorem \ref{th:orbit}.
Suppose that there exists a $G$-invariant homogeneous polynomial $P\in \mathbb{Q}[x_1,\ldots,x_d]$
such that
$$
P(v)=0\;\; \Longleftrightarrow \;\; v=0\quad \quad \hbox{for $v\in \mathbb{Z}^d$.}
$$
Then the space $G(\mathbb{Z})\backslash G(\mathbb{R})$ is compact.
\end{corollary}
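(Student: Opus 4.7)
The plan is to realise $G(\mathbb{Z})\backslash G(\mathbb{R})$ as a closed subset of the space of unimodular lattices $\mathcal{L}_d$ via the map $\iota$ of Theorem~\ref{th:orbit}, and then use the Mahler compactness criterion (Theorem~\ref{th:mahler}) to conclude. Since $\iota$ is a homeomorphism onto its closed image $\mathrm{Im}(\iota)=\{\mathbb{Z}^d g:g\in G(\mathbb{R})\}$, it suffices to show that $\mathrm{Im}(\iota)$ is relatively compact in $\mathcal{L}_d$, i.e.\ that there is a uniform $\delta>0$ so that every nonzero element of every lattice $\mathbb{Z}^d g$ ($g\in G(\mathbb{R})$) has norm at least $\delta$.

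To produce such a $\delta$ I would use $P$ as follows. First, multiplying by a common denominator of its coefficients, one may assume $P\in\mathbb{Z}[x_1,\ldots,x_d]$. By $G$-invariance, $P(vg)=P(v)$ for all $v\in\mathbb{C}^d$ and all $g\in G$. Now take any nonzero vector $w\in\mathbb{Z}^d g$; write $w=vg$ with $v\in\mathbb{Z}^d\setminus\{0\}$. Then
$$
P(w)=P(vg)=P(v)\in\mathbb{Z},
$$
and by the hypothesis on the zero set of $P$ on $\mathbb{Z}^d$, $P(v)\ne 0$, hence $|P(w)|\ge 1$. Homogeneity of $P$ of some degree $k$ gives a constant $C>0$ with $|P(w)|\le C\|w\|^k$ for all $w\in\mathbb{R}^d$; combining,
$$
\|w\|\ge C^{-1/k}=:\delta.
$$
Thus the uniform lower bound required by Theorem~\ref{th:mahler} holds on $\mathrm{Im}(\iota)$, so $\mathrm{Im}(\iota)$ is relatively compact in $\mathcal{L}_d$.

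Finally, Theorem~\ref{th:orbit} guarantees that $\mathrm{Im}(\iota)$ is closed in $\mathcal{L}_d$, so it is actually compact, and since $\iota$ is a homeomorphism onto this set, $G(\mathbb{Z})\backslash G(\mathbb{R})$ is compact. The argument has no real obstacle beyond the mild issue of clearing denominators in $P$; the substantive work has already been done in Theorems~\ref{th:orbit} and~\ref{th:mahler}, and the role of $P$ is simply to provide an arithmetic invariant forcing lattice vectors away from the origin.
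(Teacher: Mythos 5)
Your proof is correct and follows essentially the same route as the paper: reduce via Theorem \ref{th:orbit} to showing $\mathrm{Im}(\iota)$ is relatively compact, then apply the Mahler criterion using the $G$-invariant polynomial $P$ to keep nonzero lattice vectors away from the origin. The only cosmetic difference is that you extract an explicit uniform $\delta$ by clearing denominators and using homogeneity, whereas the paper argues by contradiction with a sequence $v_n g_n\to 0$ and the discreteness of $P(\mathbb{Z}^d)$.
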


\begin{proof}
According to Theorem \ref{th:orbit}, it is sufficient to show that $\hbox{Im}(\iota)$ is relatively
compact. For this we apply Theorem \ref{th:mahler}. Suppose that there exist $g_n\in G(\mathbb{R})$
and $v_n\in\mathbb{Z}^d\backslash \{0\}$ such that $v_ng_n\to 0.$
Then $P(v_ng_n)=P(v_n)\to 0$. Since the set $P(\mathbb{Z}^d)$ is discrete, it follows that $P(v_n)=0$
for sufficiently large $n$. Then $v_n=0$, which gives a contradiction. Hence,
$\hbox{Im}(\iota)$ is relatively compact, as required.
\end{proof}

We illustrate Corollary \ref{c:cccc} by two examples:

\begin{itemize}
\item Let 
$$
Q(x)=\sum_{i,j=1}^d a_{ij} x_ix_j 
$$
be a nondegenerate quadratic form with rational coefficients and 
$$
G=\hbox{SO}(Q)=\{g\in\hbox{SL}_d(\mathbb{C}):\, Q(x\cdot g)=Q(x)\}
$$
the corresponding orthogonal group. Suppose that the equation $Q(x)=0$ has no
nonzero integral solutions. For instance, one can take $Q(x)=x_1^2+x_2^2-3x_3^2$.
Then according to Corollary \ref{c:cccc}, the space $G(\mathbb{Z})\backslash G(\mathbb{R})$ is compact. 
We note that if the equation $Q(x)=0$ has nonzero real solutions, then
the group $G(\mathbb{R})$ is not compact.

\item Fix $a,b\in\mathbb{N}$ such that the equation $w^2-ax^2-by^2+abz^2=0$ has no nonzero integral
  solutions.
Consider the matrices
$$
i=\left(
\begin{tabular}{ll}
$\sqrt{a}$ & $0$\\
$0$ & $-\sqrt{a}$
\end{tabular}
\right),
\quad
j=\left(
\begin{tabular}{ll}
$0$ & $1$\\
$b$ & $0$
\end{tabular}
\right),
\quad
k=\left(
\begin{tabular}{ll}
$0$ & $\sqrt{a}$\\
$-b\sqrt{a}$ & $0$
\end{tabular}
\right),
$$
which satisfy the quaternion relations
$$
i^2=aI,\quad j^2=bI,\quad i\cdot j=-j\cdot i=k.
$$
We claim that
$$
\Gamma=(\mathbb{Z}I+\mathbb{Z}i+\mathbb{Z}j+\mathbb{Z}k)\cap \hbox{SL}_2(\mathbb{R})
$$
is a cocompact lattice in $\hbox{SL}_2(\mathbb{R})$.

To check this, we note that $\{I,i,j,k\}$ forms a basis of $\hbox{M}_2(\mathbb{C})$.
We define the integral structure on $\hbox{M}_2(\mathbb{C})$ with respect to this basis,
and embed the group $G=\hbox{SL}_2(\mathbb{C})$ in $\hbox{GL}_2(\hbox{M}_2(\mathbb{C}))$
using the represenation $\rho:G\to \hbox{GL}_2(\hbox{M}_2(\mathbb{C}))$ defined by
$\rho(g):X\mapsto X\cdot g$. Then $\Gamma=G(\mathbb{Z})$ and $G(\mathbb{R})\simeq \hbox{SL}_2(\mathbb{R})$.
The polynomial 
$$
\det(wI+xi+yj+zk)=w^2-ax^2-by^2+ab z^2.
$$
is $G$-invariant, so that the claim follows from Corollary \ref{c:cccc}.

\end{itemize}

\section{Borel density theorem}

We conclude these lectures with a version of the Borel Density Theorem \cite{b}, which illustrates how
dynamical systems techniques can be used to address arithmetic questions.

\begin{theorem}[Borel density]\label{th:borel}
Let $\Gamma$ a lattice in $\hbox{\rm SL}_d(\mathbb{R})$. Then given a polynomial representation
$\rho:\hbox{\rm SL}_d(\mathbb{R})\to \hbox{\rm GL}_N(\mathbb{C})$,
every vector $v\in\mathbb{C}^N$ which is fixed by $\rho(\Gamma)$ is also fixed by $\rho(\hbox{\rm SL}_d(\mathbb{R}))$.
\end{theorem}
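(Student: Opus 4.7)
The plan is to reduce the claim to a recurrence statement for a one-parameter unipotent action on $\mathbb{C}^N$ and then invoke Corollary \ref{c:rec}. Let $H = \{g \in \mathrm{SL}_d(\mathbb{R}) : \rho(g)v = v\}$; this is a closed subgroup of $\mathrm{SL}_d(\mathbb{R})$ containing $\Gamma$, and the goal is to show $H = \mathrm{SL}_d(\mathbb{R})$. Since $\mathrm{SL}_d(\mathbb{R})$ is generated by the unipotent one-parameter subgroups $\{\exp(tE_{ij}) : t \in \mathbb{R}\}$, $i \neq j$, it suffices to show that $H$ contains every one-parameter unipotent subgroup $u(t) = \exp(tX)$, where $X$ is a nilpotent element of $\mathrm{M}_d(\mathbb{R})$ with $\mathrm{Tr}(X)=0$.

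Fix such a $u$ and consider the right action of $u$ on the finite-measure space $\Gamma \backslash \mathrm{SL}_d(\mathbb{R})$. By the Poincar\'e recurrence theorem applied to this measure-preserving flow, the set $D$ of $g \in \mathrm{SL}_d(\mathbb{R})$ for which $\Gamma g$ is recurrent is of full measure, hence dense (the invariant measure has full support). For $g \in D$ I would pick $t_n \to \infty$, $\gamma_n \in \Gamma$ and $h_n \to I$ with $g u(t_n) = \gamma_n g h_n$. Setting $w = \rho(g^{-1})v$ and using $\rho(\gamma_n)v = v$, the identity $\gamma_n = g u(t_n) h_n^{-1} g^{-1}$ gives
$$\rho(u(t_n))\,\rho(h_n^{-1})\,w \;=\; w,$$
so $\rho(u(t_n))\,w \to w$ as $n \to \infty$. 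Hence $w$ is a recurrent point for the action $t \mapsto \rho(u(t))$ on $\mathbb{C}^N$.

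Because $X$ is nilpotent, $z \mapsto \exp(zX)$ is polynomial in $z$, so $S = \{\exp(zX) : z \in \mathbb{C}\}$ is a one-dimensional algebraic subgroup of $\mathrm{SL}_d(\mathbb{C})$ defined over $\mathbb{R}$, and $\rho$ induces a polynomial action of $S$ on $\mathbb{C}^N$. After viewing $\mathbb{C}^N \simeq \mathbb{R}^{2N}$ as the real points of a complex variety defined over $\mathbb{R}$ (so that the real-coefficient polynomial action of $S(\mathbb{R})$ on $\mathbb{R}^{2N}$ arising from real and imaginary parts of $\rho$ extends to an $\mathbb{R}$-defined action of $S$), Corollary \ref{c:rec} applies and forces $w$ to be $S$-fixed. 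In particular $\rho(u(t))\,w = w$ for all $t \in \mathbb{R}$, which unwinds to $g u(t) g^{-1} \in H$ for every $g \in D$ and every $t$. Taking a sequence $g_n \in D$ with $g_n \to I$ and using that $H$ is closed, we conclude $u(t) \in H$ for all $t$, which finishes the proof.

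The main obstacle is the transfer step: converting the abstract Poincar\'e recurrence of $\Gamma g$ under $u$ into genuine recurrence of the vector $w = \rho(g^{-1})v$ under the linear action of $\rho\circ u$. Once the algebraic identity $\gamma_n = gu(t_n)h_n^{-1}g^{-1}$ is combined with $\rho(\gamma_n)v = v$ and the trivialising effect of $h_n \to I$, the rigidity encoded in Corollary \ref{c:rec} (ultimately coming from the openness of algebraic orbit maps in Theorem \ref{th:orbit0}) does the remaining work; the minor technical point to check is that $\rho$ genuinely produces a polynomial $S$-action on $\mathbb{C}^N$ defined over $\mathbb{R}$, which is handled by the real/imaginary-part decomposition above.
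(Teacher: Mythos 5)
Your pivotal deduction ``$\rho(u(t_n))\rho(h_n^{-1})w = w$, so $\rho(u(t_n))w \to w$'' is not justified, and it sits exactly at the transfer step you yourself flag as the main obstacle. Writing $\rho(u(t_n))w = w + \rho(u(t_n))\bigl(I-\rho(h_n^{-1})\bigr)w$, the vector $\bigl(I-\rho(h_n^{-1})\bigr)w$ does tend to $0$, but $\rho(u(t))$ is a unipotent one-parameter group, so its entries are (generally nonconstant) polynomials in $t$ and $\|\rho(u(t_n))\|$ blows up as $t_n\to\infty$; Poincar\'e recurrence gives no quantitative relation between how close $h_n$ is to $I$ and how large $t_n$ is, so the error term need not vanish and forward recurrence of $w$ does not follow as written. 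Fortunately the same identity can be read backwards: multiplying $\rho(u(t_n))\rho(h_n^{-1})w = w$ by $\rho(u(-t_n))$ gives $\rho(u(-t_n))w=\rho(h_n^{-1})w\to w$, so $w$ is recurrent for the reversed (equally unipotent, equally algebraic) flow $t\mapsto u(-t)$, and your rigidity step still applies --- either via Corollary \ref{c:rec} with $X$ replaced by $-X$, or more simply by noting that each coordinate of $\rho(u(t))w$ is a polynomial in $t$ and a nonconstant polynomial cannot converge along $t_n\to-\infty$. With that one-line repair the rest of your argument (full measure of $D$ hence density, closedness of $H$, generation of $\mathrm{SL}_d(\mathbb{R})$ by unipotent one-parameter subgroups) is sound. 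One further small point: the recurrence lemma in the paper is stated for compact metric spaces, whereas $\Gamma\backslash \mathrm{SL}_d(\mathbb{R})$ need not be compact, so you are implicitly using the general finite-measure Poincar\'e recurrence theorem; that is standard, but should be said.

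It is worth comparing with the paper's route, which avoids the transfer problem altogether: since $v$ is $\rho(\Gamma)$-fixed, the map $g\mapsto[v\cdot\rho(g)]$ descends to $\pi:\Gamma\backslash \mathrm{SL}_d(\mathbb{R})\to\mathbb{P}^{N-1}$, and the finite invariant measure is pushed forward to a $T$-invariant measure $\nu$ on the compact space $\mathbb{P}^{N-1}$, where $T=\rho(g)$ for a single unipotent $g\neq I$. Poincar\'e recurrence is then applied directly in the target, Lemma \ref{l:proj} forces $\nu$-almost every point to be genuinely $T$-fixed, and the theorem follows because the stabiliser of $v$ then contains $hgh^{-1}$ for almost every $h$, hence an infinite normal subgroup of $\mathrm{SL}_d(\mathbb{R})$. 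So the paper trades your ``recurrence upstairs plus Corollary \ref{c:rec} plus generation by unipotents and a limit $g_n\to I$'' for ``pushforward to projective space plus projective rigidity plus normality''; the pushforward is precisely what neutralises both the unboundedness of $\rho(u(t_n))$ and the non-compactness of $\Gamma\backslash \mathrm{SL}_d(\mathbb{R})$ that your version has to handle by hand.
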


This theorem can be refined to show that the Zariski closure of 
$\Gamma$ is equal to $\hbox{SL}_d(\mathbb{C})$.
As we shall see, the proof that we present applies more generally if $\hbox{SL}_d(\mathbb{R})$ is replaced
by any Lie group $G\subset\hbox{\rm GL}_d(\mathbb{R})$ 
which is generated by unipotent one-parameter subgroups.

The main idea of the proof is to compare the recurrence property of orbits 
for measure-preserving actions (Lemma \ref{l:poincare})
with the rigid behaviour of orbits for polynomial actions (Lemma \ref{l:proj}).

\begin{lemma}[Poincare recurrence]\label{l:poincare}
Let $T:X\to X$ be a homeomorphism of a compact metric space $X$ and $\mu$ a Borel probability 
$T$-invariant measure on $X$. Then for $\mu$-almost every $x\in X$, $T^{n_k}(x)\to x$ along a subsequence
$n_k$.
\end{lemma}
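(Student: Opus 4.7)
The plan is to reduce the pointwise recurrence statement to the classical measure-theoretic fact that, for any measurable set $A$, almost every point of $A$ returns to $A$, and then to promote this to topological recurrence by using a countable basis of the topology.

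First, since $X$ is a compact metric space, it is second countable, so I fix a countable basis $\{U_k\}_{k\ge 1}$ of its topology. The plan is then to establish the following \emph{measure-theoretic} recurrence: for every measurable $A\subset X$, the set
$$
E_A=\{x\in A:\, T^n(x)\notin A\;\hbox{for all $n\ge 1$}\}
$$
satisfies $\mu(E_A)=0$. This is the classical argument: the preimages $T^{-n}(E_A)$ for $n\ge 0$ are pairwise disjoint, since if $T^{-i}(E_A)\cap T^{-j}(E_A)\ne\emptyset$ with $i<j$, then some $y=T^i(x)\in E_A$ satisfies $T^{j-i}(y)\in E_A\subset A$, contradicting the definition of $E_A$. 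By $T$-invariance of $\mu$, each $T^{-n}(E_A)$ has measure $\mu(E_A)$, so disjointness and $\mu(X)=1$ force $\mu(E_A)=0$.

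Next I would upgrade this to infinitely-many returns. Let
$$
F_k=\{x\in U_k:\,x\in U_k\hbox{ for only finitely many iterates}\}.
$$
If $x\in F_k$, let $n$ be the largest index with $T^n(x)\in U_k$; then $T^n(x)\in E_{U_k}$. Hence $F_k\subset\bigcup_{n\ge 0}T^{-n}(E_{U_k})$, and invariance of $\mu$ combined with $\mu(E_{U_k})=0$ gives $\mu(F_k)=0$. Setting $N=\bigcup_k F_k$, we have $\mu(N)=0$, and for any $x\notin N$ and any basic neighbourhood $U_k\ni x$, the orbit $(T^n(x))_{n\ge 1}$ meets $U_k$ infinitely often.

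Finally, to conclude that $T^{n_k}(x)\to x$ along a subsequence for $x\notin N$, I would choose, for each $m\in\mathbb{N}$, a basic set $U_{k_m}$ with $x\in U_{k_m}$ and $\operatorname{diam}(U_{k_m})<1/m$; such $U_{k_m}$ exist because $\{U_k\}$ is a basis. Infinite return to each $U_{k_m}$ lets me pick $n_m$ with $T^{n_m}(x)\in U_{k_m}$, hence $d(T^{n_m}(x),x)<1/m$, and by choosing the $n_m$ strictly increasing we obtain the desired subsequence. The only mildly delicate point is the disjointness-of-preimages computation at the heart of the measure-theoretic recurrence; everything else is bookkeeping with a countable basis.
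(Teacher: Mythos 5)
Your proof is correct: the disjointness of the preimages $T^{-n}(E_A)$ is argued properly, the passage from ``no return'' to ``only finitely many returns'' via $F_k\subset\bigcup_{n\ge 0}T^{-n}(E_{U_k})$ is sound, and the countable-basis bookkeeping (basic sets of diameter $<1/m$ around $x$, strictly increasing return times) legitimately yields $T^{n_m}(x)\to x$. The paper does not prove this lemma at all --- it simply cites it as a standard fact from ergodic theory (Brin--Stuck, Sec.~4.2) --- and your argument is exactly that standard proof, so there is nothing to reconcile; the only cosmetic slip is in the displayed definition of $F_k$, which should read $\{x\in U_k:\,T^n(x)\in U_k\hbox{ for only finitely many }n\}$, as your subsequent reasoning already assumes.
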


Lemma \ref{l:poincare} is a standard fact from ergodic theory (see, for instance, \cite[Sec.~4.2]{bs}).

\begin{lemma}\label{l:proj}
Let $T\in\hbox{\rm GL}_d(\mathbb{C})$ be a unipotent element acting on a the projective space
$\mathbb{P}^{d-1}$. Suppose that for $[v]\in \mathbb{P}^{d-1}$, we have $T^{n_k}([v])\to [v]$
along a subsequence $n_k$. Then the vector $v$ is fixed by $T$.
\end{lemma}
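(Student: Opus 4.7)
The plan is to exploit the polynomial growth of $T^n$ in $n$ (coming from unipotency) to show that the \emph{full} sequence $[T^n v]$ already has a well-defined limit in projective space, and then to argue that this limit being equal to $[v]$ forces $v$ to be fixed by $T$.

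First I would write $T = I + N$ with $N = T - I$ nilpotent (of index at most $d$) and, using that $I$ and $N$ commute, apply the binomial theorem to obtain
$$
T^n v \;=\; \sum_{j=0}^{d-1}\binom{n}{j}\,N^j v.
$$
Let $m$ be the largest integer with $N^m v \neq 0$, so $0 \le m \le d-1$. If $m = 0$ then $Nv = 0$ and we are done, so assume toward contradiction that $m \ge 1$. The sum above terminates at $j = m$, and since $\binom{n}{j}/\binom{n}{m} \to 0$ for $j < m$, the dominant term as $n \to \infty$ is $\binom{n}{m}\,N^m v$. Dividing through,
$$
\binom{n}{m}^{-1}\, T^n v \;\longrightarrow\; N^m v \;\neq\; 0 \quad \text{in } \mathbb{C}^d,
$$
and therefore $[T^n v] \to [N^m v]$ in $\mathbb{P}^{d-1}$ along the \emph{full} sequence $n \to \infty$.

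Next, the assumed subsequence $T^{n_k}[v]$ is then also forced to converge to $[N^m v]$, while by hypothesis it converges to $[v]$. Thus $[N^m v] = [v]$, so $N^m v = \lambda v$ for some $\lambda \in \mathbb{C}^\times$. Applying $N$ to both sides gives $\lambda\, N v = N^{m+1} v = 0$, hence $N v = 0$, which forces $m = 0$ and contradicts $m \ge 1$. Therefore $m = 0$ from the outset, meaning $N v = 0$, i.e.\ $T v = v$.

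The only delicate step is the projective convergence: one must rescale by the nonzero leading coefficient $\binom{n}{m}$ and confirm that the limit of the rescaled orbit is nonzero — which is precisely the definition of $m$. Once this asymptotic analysis is in place, the remaining argument is the short algebraic manipulation above, where the crucial input is that a unipotent element's Jordan structure forces the ``top'' vector $N^m v$ to lie in $\ker N$.
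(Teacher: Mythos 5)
Your proof is correct, and it is exactly the direct argument via the nilpotent part $N=T-I$ that the paper merely alludes to (it states the lemma ``is easy to prove directly using the Jordan Canonical Form for $T$'' and gives no details). Your binomial-expansion analysis, showing $[T^n v]\to[N^m v]$ along the full sequence and then deducing $Nv=0$ from $N^{m+1}v=0$, is a complete and valid filling-in of that sketch.
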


Lemma \ref{l:proj} is a version of Corollary \ref{c:rec}, but it easy to prove it directly
using the Jordan Canonical Form for $T$.

\begin{proof}[Proof of Theorem \ref{th:borel}]
We consider the map
$$
\pi: \Gamma\backslash \hbox{SL}_d(\mathbb{R})\to \mathbb{P}^{N-1}:g\mapsto [v\cdot \rho(g)],
$$
and define a finite $\rho(\hbox{SL}_d(\mathbb{R}))$-invariant measure $\nu$ on $\mathbb{P}^{N-1}$
by 
$$
\nu(B)=\mu(\pi^{-1}(B))\quad\hbox{ for Borel $B\subset \mathbb{P}^{N-1}$,}
$$
where $\mu$ denotes  the finite invariant measure on $\Gamma\backslash \hbox{SL}_d(\mathbb{R})$.
We take a unipotent element $g\in \hbox{SL}_d(\mathbb{R})$ such that $g\ne I$.
The map $T=\rho(g)$ is also unipotent by Proposition \ref{th:us}, so that
by Lemma \ref{l:poincare}, $\nu$-almost every $x\in
\mathbb{P}^{N-1}$ is a limit point of the sequence $T^n(x)$. Hence, by Lemma \ref{l:proj}, 
for almost every $h\in G$, 
$$
v\cdot\rho(h)\rho(g)=v\cdot\rho(h).
$$
This implies that
the stabiliser of $v$ contains an infinite normal subgroup of $\hbox{SL}_d(\mathbb{R})$.
Hence, $v$ is fixed by $\rho(\hbox{SL}_d(\mathbb{R}))$, as required. 
\end{proof}

\section{Suggestions for further reading}

This exposition is intended to provide the reader with a first glimpse into
the beautiful theories of Lie groups, algebraic groups, and their discrete subgroups.
While we were trying to present some of the most important and ideas and techniques,
it is impossible to give a comprehensive treatment of these topics in a 10-hour
course. We hope that these notes would encourage the reader to study the subject in more details
and offer the following suggestions for further reading:

\begin{itemize}
\item the theory of Lie groups: \cite{dk,bump,hall,kir,on,ross,ser};
\item the theory of algebraic groups: \cite{borel_s} for a concise introduction
and \cite{borel,hum,sp} for a comprehensive treatment;
\item lattices in $\hbox{SL}_2(\mathbb{R})$: \cite{be,katok};
\item lattices in general Lie groups: \cite{morris,rag};
\item arithmetic lattices: \cite{borel_a,hum_a,ji,morris,pr}.
\end{itemize}

\noindent {\bf Acknowledgement:} I would like to thank the organisers of the Summer School 
``Modern dynamics and interactions with analysis, geometry and number theory''
for providing excellent working conditions.
The author was supported by the EPSRC grant EP/H000091/1 and the ERC grant 239606.

\vspace{0.5cm}

\noindent{\small \sc School of Mathematics, University of Bristol, UK\\ \texttt{a.gorodnik@bristol.ac.uk}}


\begin{thebibliography}{100}

\bibitem{am}
Atiyah, M. F.; Macdonald, I. G. \emph{Introduction to commutative algebra.} Addison-Wesley Publishing Co., Reading, Mass.-London-Don Mills, Ont. 1969.

\bibitem{be}
Benoist, Yves \emph{Pavages du plan.} Pavages, 1--48, Ed. \'Ec. Polytech., Palaiseau, 2001.

\bibitem{b}
Borel, Armand
\emph{Density properties for certain subgroups of semi-simple groups without compact components. }
Ann. of Math. 72 (1960), 179--188.

\bibitem{borel_s}
 Borel, Armand \emph{Linear algebraic groups.} 1966 Algebraic Groups and Discontinuous Subgroups (Proc. Sympos. Pure Math., Boulder, Colo., 1965) pp. 3--19 Amer. Math. Soc., Providence, R.I.


\bibitem{borel_a}
Borel, Armand \emph{Introduction aux groupes arithm\'etiques.}
Publications de l'Institut de Math\'ematique de l'Universit\'e de Strasbourg, XV. 
Actualit\'es Scientifiques et Industrielles, No. 1341 Hermann, Paris 1969.

\bibitem{borel}
 Borel, Armand \emph{Linear algebraic groups.} Second edition. Graduate Texts in Mathematics,
 126. Springer-Verlag, New York, 1991.

\bibitem{bs}
Brin, Michael; Stuck, Garrett
\emph{Introduction to dynamical systems.} Cambridge University Press, Cambridge, 2002.

\bibitem{dk}
 Duistermaat, J. J.; Kolk, J. A. C. \emph{Lie groups.} Universitext. Springer-Verlag, Berlin, 2000. 

\bibitem{bump}
Bump, Daniel \emph{Lie groups.} Graduate Texts in Mathematics, 225. Springer-Verlag, New York, 2004.

\bibitem{hall}
 Hall, Brian C. \emph{Lie groups, Lie algebras, and representations. An elementary introduction.} Graduate Texts in Mathematics, 222. Springer-Verlag, New York, 2003.

\bibitem{hum}
Humphreys, James E. 
\emph{Linear algebraic groups.}
 Graduate Texts in Mathematics, No. 21. Springer-Verlag, New York-Heidelberg, 1975.

\bibitem{hum_a}
Humphreys, James E. \emph{Arithmetic groups.} Lecture Notes in Mathematics, 789. Springer, Berlin, 1980. 

\bibitem{ji}
Ji, Lizhen
\emph{Arithmetic groups and their generalizations.}
What, why, and how. AMS/IP Studies in Advanced Mathematics, 43. American Mathematical Society, Providence, RI; International Press, Cambridge, MA, 2008.

\bibitem{katok}
Katok, Svetlana 
\emph{Fuchsian groups.}
 Chicago Lectures in Mathematics. University of Chicago Press, Chicago, IL, 1992.

\bibitem{kir}
Kirillov, Alexander, Jr. \emph{An introduction to Lie groups and Lie algebras.} Cambridge Studies in Advanced
Mathematics, 113. Cambridge University Press, Cambridge, 2008.


\bibitem{morris} Morris, Dave \emph{Introduction to arithmetic groups.} Arxiv.math/0106063.

\bibitem{on}
Onishchik, A. L.; Vinberg, E. B. \emph{Lie groups and algebraic groups.} Springer Series in Soviet Mathematics. Springer-Verlag, Berlin, 1990. 

\bibitem{pr}
 Platonov, Vladimir; Rapinchuk, Andrei 
\emph{Algebraic groups and number theory.}
Pure and Applied Mathematics, 139. Academic Press, Inc., Boston, MA, 1994.

\bibitem{rag}
 Raghunathan, M. S. \emph{Discrete subgroups of Lie groups.}
 Ergebnisse der Mathematik und ihrer Grenzgebiete, Band 68. Springer-Verlag, New York-Heidelberg, 1972.

\bibitem{ross}
 Rossmann, Wulf \emph{Lie groups. An introduction through linear groups.} Oxford Graduate Texts in Mathematics, 5. Oxford University Press, Oxford, 2002.


\bibitem{ser}
Serre, Jean-Pierre \emph{Lie algebras and Lie groups.} Lecture Notes in Mathematics, 1500. Springer-Verlag, Berlin, 2006.

\bibitem{sp} Springer, T. A. \emph{Linear algebraic groups.}
 Second edition. Progress in Mathematics, 9. Birkh\"auser Boston, Inc., Boston, MA, 1998.

\end{thebibliography}
\end{document}